\theoremstyle{plain}
\numberwithin{equation}{section}
\newtheorem{theorem}{Theorem}[section]
\newtheorem{proposition}[theorem]{Proposition}
\newtheorem{lemma}[theorem]{Lemma}
\newtheorem{corollary}[theorem]{Corollary}
\def\r{\mathbb R}
\def\e{\mathbb E} 
\def\p{\mathbb P} 
\def\lb{\llbracket}
\def\rb{\rrbracket}
\def\V{{\mathcal V}}
\def\H{{\mathcal H}}
\def\L{{\mathscr L}}
\def\cvp{{\buildrel \mbox{\tiny\rm (p)} \over \longrightarrow}}
\def\cvL2{{\buildrel \mbox{\tiny\rm ($L^2$)} \over \longrightarrow}}
\title{Average-weight percolation on the complete graph}
\author{Elie A\"id\'ekon}
\address{Elie A\"id\'ekon, SMS, Fudan University, China}
\email{aidekon@fudan.edu.cn}
\author{Yueyun Hu}
\address{Yueyun Hu, 
LAGA, Universit\'e Paris XIII,  93430 Villetaneuse,
France}
\email{yueyun@math.univ-paris13.fr}
\date{\today}
\begin{document}

\begin{abstract} 
Attach to each edge of the complete graph on $n$ vertices, i.i.d. exponential random variables with mean $n$. Aldous \cite{aldous} proved that the longest path with average weight below $p$ undergoes a phase transition at $p=\frac{1}{e}$: it is  $o(n)$ when $p<\frac{1}{e}$ and of order $n$ if $p>\frac1e$. Later, Ding \cite{ding} revealed a finer phase transition around $\frac{1}{e}$:   there exist $c'>c>0$ such that the length of the longest path is of order $\ln^3 n$ if $ p \le \frac{1}{e}+\frac{c}{\ln^2 n}$ and is  polynomial  if $p\ge  \frac{1}{e}+\frac{c'}{\ln^2 n}$.  We identify the location of this phase transition and obtain sharp asymptotics of the length near criticality.  The proof uses an exploration mechanism mimicking a branching random walk with selection introduced by Brunet and Derrida \cite{brunet_derrida2}.
\end{abstract}

\subjclass[2010]{60C05,60F05,  60G70,  60J80}

\keywords{Average-weight percolation, branching random walk with selection.}

\maketitle

\section{Introduction}

We consider the complete graph $K_n$ of the set of vertices $\lb 1, n\rb=\{1,2,\ldots,n\}$. Following Aldous \cite{aldous}, we  assign each edge  an independent exponential random variable with mean $n$. In this setting, Aldous proved the existence of a phase transition for the size of  the largest tree with average weight below a given threshold \cite[Theorem 1]{aldous}. We are interested in the analogous result for paths, rather than trees, established in  \cite[Proposition 3.7]{aldous}. In order to state this result, we introduce the following notation. 
For ease of notation, in this paper, we will suppose that the labels $X(u,v)_{u\neq v \in \lb 1, n\rb}$  on the (non-oriented) edges of $K_n$ are i.i.d. copies of 
\begin{equation}\label{def:X}
X=ne \mathrm{Exp}(1)-1
\end{equation} 
where $\mathrm{Exp(1)}$ denotes an exponential random variable with mean $1$. 
A path $\gamma$ is a sequence of vertices $(\gamma_0,\gamma_1,\ldots,\gamma_\ell) \in  \lb 1, n\rb^{\ell+1}$ which is made of distinct adjacent edges $(\gamma_i,\gamma_{i+1})$. We call $|\gamma|=\ell$ the length of the path, and let ${\mathcal P}_\ell$ be the collection of paths of length $\ell$. A path of length $0$ is reduced to a single vertex.

For any $\gamma \in {\mathcal P}_\ell$, we let $S_0(\gamma)=0$ and 
\begin{equation}    
S_i(\gamma) := X(\gamma_0,\gamma_1)+\ldots+X(\gamma_{i-1},\gamma_i),\qquad  i\in \lb 1,  \ell\rb. \label{Sigamma}
\end{equation}
We write $S(\gamma):=(S_1(\gamma),\ldots,S_\ell(\gamma))$ and call $\frac{S_{\ell}(\gamma)}{\ell}$ the average weight of $\gamma$. For $\lambda\in (-1, \infty)$, let $\L(n,\lambda)$ be the length of the longest path with average weight below $\lambda$:
\begin{equation}    
\L(n,\lambda):=\sup\{ \ell \ge 1\colon \exists \, \gamma \in  {\mathcal P}_\ell \textrm{ such that } S_{\ell}(\gamma) \le \lambda \ell \}.
\label{def-Lnlambda} \end{equation}

  Translated in our setting, \cite[Proposition 3.7]{aldous} states that $\L(n, \lambda)=o(n)$ if $\lambda<0$ and  $\L(n,\lambda)=\Omega(n)$ if $\lambda>0$. 
 Later, Ding \cite{ding} proved that there is a critical window of size $(\ln n)^{-2}$ around $\lambda=0$:  there exist some constants $c'> c >0$ and $C'>C$ such that 
 
 (i) if $ -1\le \alpha \le c $, then  \begin{equation} \p\Big( C \ln^3 n \le \L\big(n,\frac{\alpha}{\ln^2 n}\big)  \le C' \ln^3 n\Big) \to 1, \qquad n \to \infty;  \label{Ding-1}\end{equation}

(ii) if $\alpha \ge c'$, then \begin{equation} \p\Big( n^{1/4} \le \L\big(n,\frac{\alpha}{\ln^2 n}\big)  \le C' \frac{n}{\ln^2 n}\Big) \to 1, \qquad n \to \infty.  \label{Ding-2}\end{equation}

The constant $-1$ in (i) is arbitrary, see \cite[Theorem 1.3]{ding}. To the best of our knowledge, it remains open to identify the critical value at which  the longest path undergoes a transition from  order $\ln^3 n$ to   polynomial order, as well as the precise asymptotic behavior of $\L\big(n,\frac{\alpha}{\ln^2 n}\big)$. Another natural problem concerns the near-critical case $\lambda = \varepsilon$ for small $\varepsilon > 0$, which was further investigated by Ding and Goswami \cite{ding_goswami}: there exists some $\varepsilon_0>0$ such that for all $\varepsilon < \varepsilon_0$, \begin{equation} \p\Big( n e^{- C' \varepsilon^{-1/2}} \le \L(n, \varepsilon) \le   n e^{- C \varepsilon^{-1/2}}\Big) \to 1.  \label{Ding-Goswami}\end{equation}

  The following theorems give sharp asymptotics of $\L(n,\lambda)$ in these regimes.  
\begin{theorem}\label{t:main}
If $\alpha<\frac{\pi^2}{2}$, then as $n\to\infty$, $$\frac{1}{\ln^3 n} \L\big(n,\frac{\alpha}{\ln^2 n}\big) \, \cvp\, \frac{1}{\frac{\pi^2}{2}-\alpha}.$$ 
\end{theorem}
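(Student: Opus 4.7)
\medskip\noindent\textbf{Setup and critical value.} I would obtain the result by matching upper and lower bounds via the Brunet--Derrida picture for branching random walks with selection alluded to in the abstract. For $\ell\ll n$, the path exploration from a fixed root vertex is well-approximated by a BRW with $n$ children per generation and i.i.d.\ displacements distributed as $X$. The associated log-Laplace transform $\psi(t)=\ln n + t - \ln(1+tne)$ has critical tilt $t^{*}=1-\tfrac{2}{en}+O(n^{-2})$ (solution of $t\psi'(t)=\psi(t)$), BRW velocity $v=\psi(t^{*})/t^{*}=O(n^{-1})$, and variance $\sigma^{2}=\psi''(t^{*})=1+O(n^{-1})$. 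The vertex constraint caps the number of simultaneously alive partial paths at $n$, which plays the role of a Brunet--Derrida selection parameter $K=n$; the associated velocity shift $\tfrac{\pi^{2}\sigma^{2}}{2(\ln K)^{2}}=\tfrac{\pi^{2}}{2\ln^{2}n}(1+o(1))$ identifies $\alpha_{c}=\pi^{2}/2$ as the critical value.

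\medskip\noindent\textbf{Upper bound.} I would first show that any path $\gamma$ of length $\ell$ with $S_{\ell}(\gamma)\le\lambda\ell$ necessarily satisfies $\max_{i\le\ell}|S_{i}(\gamma)-\lambda i|\le (1+\varepsilon)\ln n$ (with high probability), for otherwise the count of sub-paths at some intermediate generation would exceed the vertex cap~$n$. Conditioning on this tube event, an exponential tilt by $t^{*}$ followed by Mogul'ski\u{\i}'s small-deviation estimate for a random walk confined to an interval of width $(1+\varepsilon)\ln n$ gives
\[
\e\!\Big[\#\{\gamma\in{\mathcal P}_{\ell}\,:\,S_{\ell}(\gamma)\le\lambda\ell,\ \max_{i\le\ell}|S_{i}(\gamma)-\lambda i|\le (1+\varepsilon)\ln n\}\Big]\ \lesssim\ n\,\exp\!\Big(-\ell\,\frac{\pi^{2}/2-\alpha-o_{\varepsilon}(1)}{\ln^{2}n}\Big),
\]
which is $o(1)$ for $\ell>(1+\delta)\,\ln^{3}n/(\tfrac{\pi^{2}}{2}-\alpha)$ once $\varepsilon$ is chosen small enough in terms of $\delta$.

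\medskip\noindent\textbf{Lower bound and main obstacle.} For the lower bound I would construct a long path via an explicit Brunet--Derrida selection algorithm: from a fixed root, maintain a population of at most $K=n^{1-o(1)}$ partial paths, extend each by a neighbour that keeps $(S_{i}(\gamma)-\lambda i)$ inside the same tube, and retain only the $K$ leftmost at each step. A second-moment / truncation argument in the spirit of B\'erard--Gou\'er\'e should then show that this population survives for at least $(1-\delta)\ln^{3}n/(\tfrac{\pi^{2}}{2}-\alpha)$ generations with probability tending to $1$, producing a path of the required length. The delicate point -- and presumably the main technical innovation of the paper -- is to couple this idealised BRW-with-selection with the true graph exploration: distinct lineages eventually share vertices, so the naive independent coupling breaks down once the explored subgraph reaches density $\Theta(n^{-1})$. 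Handling these correlations, most likely through a dispersion estimate bounding the collision rate between lineages, is the step where I expect the most subtle work to lie.
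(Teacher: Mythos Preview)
Your overall strategy matches the paper's: tilted first moment plus Mogul'ski\u{\i} for the upper bound, and a Brunet--Derrida $N$-BRW with $N=n^{a}$, $a\uparrow 1$, coupled to the graph exploration for the lower bound. Your diagnosis of the lower-bound obstacle is accurate; the paper handles vertex collisions by a three-colour (blue/red/purple) bookkeeping on the particles of $\V^{N}$ and a direct union bound (inequality~(3.4)) showing that the probability any given selected particle fails to be blue is $O(N\ell_N/n)=o(1)$, so the minimum of $\V^{N}$ at time $\ell_N$ transfers to the weight of an actual path in $K_n$.

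The real gap is in your upper bound. The sub-path first-moment argument (which is what the paper uses too, in Corollary~2.2(i), not a ``vertex cap'' argument) gives the no-large-drop condition $\min_{i\le j}(S_j(\gamma)-S_i(\gamma))\ge -b_n$ with $b_n=(1+o(1))\ln n$; combined with $S_\ell\le\lambda\ell$ this does yield your tube $|S_i-\lambda i|\le(1+\varepsilon)\ln n$, but the width of that tube is $2(1+\varepsilon)\ln n$, not the ``$(1+\varepsilon)\ln n$'' you then feed into Mogul'ski\u{\i}. Applying Mogul'ski\u{\i} directly to a tube of width $2(1+\varepsilon)\ln n$ produces rate $\pi^{2}/(8(1+\varepsilon)^{2}\ln^{2}n)$, a factor-of-four loss that is fatal: for $\alpha\in(\pi^{2}/8,\pi^{2}/2)$ the resulting first moment does not even tend to zero. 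The paper recovers the correct effective width via a record-time decomposition (Lemma~2.3): split $[0,\ell]$ into blocks of length $\Delta^{2+\eta}$ with $\Delta=b_n$, and track the running maximum $M_j$ of the centred walk at block endpoints; since $M_j$ is nondecreasing and bounded by $O(\Delta)$ it changes on only $O(1)$ blocks, and on every other block the no-drop condition traps the walk in $[M_j-(1+\eta)\Delta,M_j]$, an interval of width $(1+\eta)\Delta$. Applying Mogul'ski\u{\i} blockwise and summing over the $2^{o(\ell/\Delta^{2})}$ choices of the exceptional set then gives the bound $e^{-d\ell/\Delta^{2}}$ for any $d<\pi^{2}/2$. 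This record-time step is the missing ingredient in your sketch.
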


Note that taking $\alpha=0$ in Theorem \ref{t:main} answers \cite[{Equation (2)}]{ding}.

\begin{theorem}\label{t:main2}
 For each $\varepsilon>0$, let $\beta_n$ be a sequence with $\beta_n \in (0,\varepsilon)$ for all $n$ and such that $\liminf_{n\to\infty}  \frac{\sqrt{2\beta_n}}{\pi} \ln n > 1$.  As $n\to \infty$ then $\varepsilon\to 0$, $$\sqrt{\beta_n}\Big(\ln \L(n,\beta_n) -\ln n\Big) \, \cvp\,  -\frac{\pi}{\sqrt{2}}.$$
 In other words, for any $a>0$, 
\[
\lim_{\varepsilon\downarrow0}\liminf_{n\to\infty}\p\Big(\frac{1}{n}\L(n,\beta_n) \in [e^{-\frac{\pi}{\sqrt{2\beta_n}}(1+a)}, e^{-\frac{\pi}{\sqrt{2\beta_n}}(1-a)}]\Big)=1.
\]
\end{theorem}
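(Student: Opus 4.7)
The plan is to carry out the Brunet--Derrida BRW-with-selection framework invoked in the abstract, at the scale relevant to Theorem~\ref{t:main2}. The underlying BRW has $n$ children per particle with displacement $X = ne\mathrm{Exp}(1) - 1$; its log-Laplace $\psi(\theta) = \ln n + \theta - \ln(1+\theta ne)$ has critical tilt $\theta_{\ast}\to 1$, with $\psi(\theta_{\ast})/\theta_{\ast}\to 0$ and $\psi''(\theta_{\ast})\to 1$ as $n\to\infty$. Brunet--Derrida theory then predicts that keeping the $N$ leftmost particles at each generation produces a front of minimum velocity $v_N \sim \pi^2/(2(\ln N)^2)$. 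A width-$N$, depth-$\ell$ selected exploration inside $K_n$ consumes $\sim N\ell$ distinct vertices (bounded by $n$), and realizes a path with $S_\ell \le \beta_n\ell$ iff $v_N \le \beta_n$, forcing $N \ge \exp(\pi/\sqrt{2\beta_n})$. Balancing these constraints gives $\ell \asymp n\exp(-\pi/\sqrt{2\beta_n})$, matching the statement.

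For the upper bound, fix $a \in (0,1)$ and let $\ell_n = n\exp(-(1-a)\pi/\sqrt{2\beta_n})$. A path-reversal argument (splitting at the maximum of $S_j - \beta_n j$) reduces the event $\{\L(n,\beta_n) \ge \ell_n\}$ to the existence of a path of length $\ge \ell_n/2$ satisfying $S_j \le \beta_n j$ for all $j$. Tilting by $\theta_{\beta_n} := 1/(1+\beta_n) - 1/(ne)$, so that the tilted $X$ is centered at $\beta_n$ with unit variance to leading order, the many-to-one formula combined with a ballot-type estimate for the tilted walk yields a first moment that is too weak to conclude on its own. The bound is then obtained from a second-moment refinement showing that any realization of the event implicitly carries $\gtrsim \exp(\pi(1-a/2)/\sqrt{2\beta_n})$ near-optimal descendants at each generation, whose aggregate vertex count exceeds $n$, a contradiction.

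For the lower bound, take $\ell_n = n\exp(-(1+a)\pi/\sqrt{2\beta_n})$ and $N_n = \exp((1+a/2)\pi/\sqrt{2\beta_n})$; then $N_n\ell_n = o(n)$ and $v_{N_n} \sim \beta_n/(1+a/2)^2 < \beta_n$. Run an $N_n$-BRW exploration inside $K_n$ from an arbitrary root: at each generation, from the $N_n$ currently selected particles, reveal all edges to not-yet-visited neighbors and retain the $N_n$ children with smallest cumulative sum. Since $N_n^2\ell_n \ll n^2$, this exploration is well approximated by an idealized $N_n$-BRW on the infinite tree, and $N$-BRW front concentration (in the style of B\'erard--Gou\'er\'e or Mallein) produces a particle at depth $\ell_n$ with $S \le v_{N_n}\ell_n < \beta_n\ell_n$, giving the desired path.

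The main technical obstacle is to establish the Brunet--Derrida asymptotic $v_N \sim \pi^2/(2(\ln N)^2)$ uniformly in the regime of interest. The offspring distribution depends on $n$, so $n$-uniform control of the critical parameters $(\theta_{\ast},\psi(\theta_{\ast}),\psi''(\theta_{\ast}))$ must be sharp enough to pin down the exponent $\pi/\sqrt{2\beta_n}$. Moreover, the true graph exploration is self-avoiding with weakly correlated offspring laws, so the coupling with the idealized $N_n$-BRW must be controlled with error much smaller than the Brunet--Derrida velocity correction itself. Both ingredients should already be in place in the machinery developed for Theorem~\ref{t:main}.
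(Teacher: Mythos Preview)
Your heuristics and choice of scales are correct, but both halves of the argument have real gaps relative to what the paper actually does.

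\textbf{Upper bound.} You correctly note that the ballot-type first moment is too weak: after tilting, $\e[\#\{\gamma\in\mathcal P_\ell: S_j(\gamma)\le\beta_n j\ \forall j\}]$ is of order $n e^{\beta_n\ell}$, which explodes. Your proposed remedy---a ``second-moment refinement'' forcing $\gtrsim e^{\pi(1-a/2)/\sqrt{2\beta_n}}$ near-optimal descendants and hence too many vertices---is not a valid argument as stated. A second-moment (Paley--Zygmund) computation gives a \emph{lower} bound on the probability of existence, not an upper bound, and there is no mechanism in $K_n$ that turns ``many near-optimal BRW descendants'' into a vertex-count contradiction for a single deterministic path. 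The paper's actual device is different and concrete: it first proves (Corollary~\ref{cor:jump}(ii), via Lemma~\ref{l:inf}) that with high probability \emph{every} path has at most $Mn e^{-\Delta}$ indices $i$ at which a future drop of size $\Delta$ occurs. This structural constraint is then fed into a refined Mogul'skii estimate (Lemma~\ref{main2:upper}) bounding $\p(\max_j Y_j\le L\Delta^{-1-c},\ m_\Delta(L)\le L\Delta^{-2-c})\le e^{-dL/\Delta^2}$, which with $\Delta\sim\pi(1-a/2)/\sqrt{2\beta_n}$ beats the $n e^{\beta_n L}$ prefactor. The drop-counting step is the missing idea in your sketch.

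\textbf{Lower bound.} Your plan is close in spirit, but you assume that B\'erard--Gou\'er\'e/Mallein-type front concentration for the $N_n$-BRW holds at depth $\ell_n$, where $\ell_n$ is polynomially large in $n$ while $\ln^3 N_n$ is only polylogarithmic. No off-the-shelf result gives this in a regime where $N$, $\ell$, and the ($n$-dependent) offspring law all vary simultaneously. The paper does not attempt to prove such a statement directly. Instead it proves $L^2$-concentration of $\min_{|u|=\ell}\V^N(u)$ only on the short time scale $\ell\sim t\ln^3 N$ (Proposition~\ref{p:first_moment}), and then builds the long path by a \emph{block construction}: the exploration is restarted from a single active vertex every $\Delta=\lfloor\ln^3 N\rfloor$ steps, with a stored ``escape edge'' $(u^*_{\tau_i-1},v^*_{\tau_i-1})$ to guarantee connectivity across block boundaries. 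The final weight decomposes as $\sum_i{\tt s}_i+\sum_i\eta_{\tau_i}$, and the $\sim L/\Delta$ block contributions ${\tt s}_i$ are controlled by summing i.i.d.\ copies of the short-time minimum (Corollary~\ref{coro:concentration}), while the connector and failure terms are shown to be negligible. This block-and-restart mechanism is the substantive content of Lemma~\ref{lem:low-T2} and is not bypassed by the condition $N_n\ell_n=o(n)$ alone.
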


Taking $\beta_n= \frac{\alpha}{\ln^2 n}$ with $\alpha > \frac{\pi^2}{2}$, we see that \begin{equation}    \L\big(n,\frac{\alpha}{\ln^2 n}\big)= n^{1- \frac{\pi}{\sqrt{2 \alpha}} +o_p(1)},\label{alpha:polynomial}\end{equation}

\noindent where $o_p(1)$ denotes some quantity which converges to $0$ in probability as $n\to\infty$. This, together with Theorem \ref{t:main}, shows that there is a phase transition at $\lambda_c=\frac{\pi^2}{2}\frac1{\ln^2 n}$. We also note that \eqref{alpha:polynomial} is consistent with \eqref{Ding-2} when $\alpha$ is taken sufficiently large. 

This critical value $\lambda_c$ also appears in a related problem concerning the minimum average weight of a cycle in $K_n$. 
Ding, Sun and Wilson \cite{ding_sun_wilson} proved that conditionally on the minimum being nonnegative, its value is $\frac{\pi^2}{2 \ln^2 n}  + O(\frac{1}{\ln^3 n})$ and that the length of the corresponding cycle is of order $\ln^3 n$. It would be interesting to investigate further the connection between these two models.

\bigskip
Now, we outline the main ideas of the proofs. 
 Heuristically, we  explore the graph in the following way. Let $N=o(n)$ be an integer. We start with an initial set of $N$ vertices, called generation $0$. We examine all adjacent edges of these vertices
  and select the $N$ edges with the  smallest labels. The endpoints of the selected edges form generation $1$, and the associated labels stand for their positions. We then delete the initial set of vertices and all their adjacent edges, and continue the exploration from the vertices of generation $1$. Each incident edge of a vertex at generation $1$ carries a weight. Add this weight to the position of the vertex, 
  then select the $N$ edges associated to the $N$ smallest obtained numbers. The endpoints of the selected edges form the vertices of generation $2$ and the associated numbers stand for their positions.  This process can be repeated for roughly $\frac{n}{N}$ steps before the graph is exhausted. The weights of the paths obtained by this strategy resemble an (inhomogeneous)  branching random walk  in which only the $N$ leftmost particles are retained at each step. This model was introduced by Brunet and Derrida \cite{brunet_derrida2} who conjectured that the selection mechanism was shifting the speed of the population by a correction term of order $\frac{1}{\ln^2 N}$, a result later proved by B\'erard and Gou\'er\'e \cite{berard_gouere}.   We refer to Maillard \cite{maillard}
  for more  results on the model in continuous time. In our setting, the minimum of the exploration process at time $t$ is of order 
\[
-\ln N + \frac{\pi^2}{2 \ln^2 N} t.
\]
Taking  $N=n^{1-o(1)}$  and $N=e^{\frac{\pi}{\sqrt{2\beta_n}}}$ yields Theorem \ref{t:main} and Theorem \ref{t:main2}, respectively. A slight variation of this strategy is used to establish the lower bound for $\L(n,\lambda)$. The upper bound follows a similar approach to that of \cite{ding,ding_goswami,ding_sun_wilson} by computing suitable truncated first moments. 

\bigskip

Finally, we mention the recent preprint by Jorritsma, Maillard and M\"orters \cite{JMM} in which the authors make use of a killed branching random walk to explore growing sparse random graphs. Even if the models are different, our results  show similarity with several features of  their model, see \cite{JMM} and the references therein.

\bigskip

The paper is organized as follows. Sections \ref{s:upper} and \ref{s:lower} give the proofs of the upper bounds and lower bounds for $\L(n,\lambda)$ respectively.  Section \ref{s:rw-brw} collects results on real-valued random walks and  branching random walks that are used in Sections \ref{s:upper} and \ref{s:lower}.

\section{Proofs of the upper bounds in Theorems \ref{t:main} and \ref{t:main2}}
\label{s:upper}

Let $(S_1,S_2,\ldots)$ denote a random walk with step distribution $X$ of \eqref{def:X}, starting at $S_0=0$. For any $\ell\ge 1$ and Borel set $A\subset \r^\ell$, 
\begin{equation}\label{comp:moment1}
\e\Big[\#\{\gamma \in {\mathcal P}_\ell\colon S(\gamma) \in A\}\Big] \le n^{\ell+1}\p\Big((S_1,\ldots,S_\ell) \in A\Big), 
\end{equation}

\noindent where we used the fact that the number of paths of length $\ell$ in $K_n$ is smaller than $n^{\ell+1}$, and we recall that $S(\gamma)$ denotes the vector whose $i$-th coordinate, $S_i(\gamma)$,  was defined in \eqref{Sigamma}. Let $q_n=1-\frac1{en}$ so that $\e[e^{- q_n X}]=\frac{1}{n}\exp(-\frac1{en})$. The r.v. $X$ tilted by $e^{-q_n X}$ is distributed as $\textrm{Exp}(1)-1$.  We let $Y$ be a random walk starting at $0$ associated with this tilted distribution so that $Y$ is a centered random walk. Notice that 
\begin{equation}\label{eq:many-to-one}
\p\Big((S_1,\ldots,S_\ell) \in A\Big) = n^{-\ell}e^{-\frac{\ell}{ne}}\e\Big[e^{q_n Y_\ell} {\bf 1}_{\{(Y_1,\ldots,Y_\ell) \in A\}}\Big].
\end{equation}

 \begin{lemma}\label{l:inf}
For any $x\ge 0$, $$ \e \Big[ \#  \big\{\gamma \in \cup_{\ell\ge 1} {\mathcal P}_\ell : S_j(\gamma) \ge -x, \forall\, j\le |\gamma|-1, S_{|\gamma|}(\gamma) < -x\big\} \Big]
\le n \, e^{- q_n x}.$$ 
\end{lemma}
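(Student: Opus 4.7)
The plan is to reduce the claim to a simple first-moment bound via the many-to-one formula \eqref{eq:many-to-one}, then exploit the exponential tilt in the indicator's constraint.

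First I would decompose the event by the length $\ell = |\gamma|$ of the path. Applying the moment bound \eqref{comp:moment1} followed by \eqref{eq:many-to-one} with $A_\ell := \{(y_1,\ldots,y_\ell) : y_j \geq -x \text{ for } j \leq \ell-1, y_\ell < -x\}$ gives
\[
\e\Big[\#\{\gamma \in \mathcal{P}_\ell : S(\gamma) \in A_\ell\}\Big] \leq n \cdot e^{-\ell/(en)} \, \e\Big[e^{q_n Y_\ell} \mathbf{1}_{(Y_1,\ldots,Y_\ell) \in A_\ell}\Big].
\]
On the event $(Y_1,\ldots,Y_\ell) \in A_\ell$ one has $Y_\ell < -x$, hence $e^{q_n Y_\ell} < e^{-q_n x}$. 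Pulling this factor out and dropping $e^{-\ell/(en)} \leq 1$ yields
\[
\e\Big[\#\{\gamma \in \mathcal{P}_\ell : S(\gamma) \in A_\ell\}\Big] \leq n \, e^{-q_n x} \, \p\Big(\tau_{-x} = \ell\Big),
\]
where $\tau_{-x} := \inf\{j \geq 1 : Y_j < -x\}$ denotes the first strict downcrossing time of $-x$ by the centered random walk $Y$.

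Summing over $\ell \geq 1$ and using that the events $\{\tau_{-x} = \ell\}$ are disjoint, so $\sum_{\ell \geq 1} \p(\tau_{-x} = \ell) = \p(\tau_{-x} < \infty) \leq 1$, produces the desired bound $n \, e^{-q_n x}$. There is no real obstacle here: the statement is essentially a direct consequence of the exponential change of measure already set up in \eqref{eq:many-to-one}, together with the trivial observation that the disjoint family of paths indexed by their length and position of first downcrossing has total probability at most one under the tilted walk. The only small point to verify is that the indicator $\mathbf{1}_{(Y_1,\ldots,Y_\ell) \in A_\ell}$ together with the condition $Y_\ell < -x$ really lets us replace $e^{q_n Y_\ell}$ by its upper bound $e^{-q_n x}$, which is immediate from $q_n > 0$.
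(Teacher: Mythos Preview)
Your proof is correct and essentially identical to the paper's: decompose by length, apply \eqref{comp:moment1} and \eqref{eq:many-to-one}, bound $e^{q_n Y_\ell}$ by $e^{-q_n x}$ on the event, and observe that the remaining probabilities sum to at most $1$ as they partition the first downcrossing time of $-x$ by the centered walk $Y$. The only cosmetic difference is that the paper notes the sum actually equals $1$ (since $Y$ is centered and crosses $-x$ almost surely), whereas you use the weaker $\le 1$, which is of course sufficient.
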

\begin{proof} By \eqref{comp:moment1} and \eqref{eq:many-to-one}, we deduce from the union bound that the expectation term in Lemma \ref{l:inf} is bounded above by  $$ \sum_{\ell=1}^\infty n \e\Big[ e^{ q_n Y_\ell} {\bf 1}_{\{ Y_j \ge -x, \forall j\le \ell-1, Y_\ell < -x\}}\Big]$$ which is further bounded by $$n \, e^{- q_n x} \, \sum_{\ell=1}^\infty   \p\Big(  Y_j \ge -x, \forall j\le \ell-1, Y_\ell < -x\Big)= n \, e^{- q_n x}, $$  \noindent where the above sum over $\ell$ is equal to $1$, because  $\ell$ corresponds to the first time at which the centered random walk $(Y_j)$ crosses below $-x$. 
\end{proof}

  We deduce the following.

\begin{corollary}\label{cor:jump} 

{\rm (i)}  Let  $b_n$ be a sequence such that $b_n=\ln n +o(\ln n)$ and $b_n-\ln n\to \infty$. With probability tending to $1$ as $n\to \infty$, there is no path $\gamma$ for which $S_j(\gamma)-S_i(\gamma)\le -b_n$ for some $0\le i \le j \le |\gamma|$.

{\rm (ii)}  Let $\Delta_n=o(n)$ be a sequence of numbers in $(0,\infty)$ and $M> 2$. For all sufficiently large $n$, with probability at least $1- \frac{2}{M}$,  we have that for any  path $\gamma$, the number of indices $i \in \{0, 1, ..., |\gamma|\}$ for which there exists some $j\ge i$ such that  $S_j(\gamma)-S_i(\gamma) \le -\Delta_n$  is at most $Mne^{-\Delta_n}$.

    \end{corollary}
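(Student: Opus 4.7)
The plan is to derive both parts from Lemma \ref{l:inf} by Markov's inequality, using the common observation that any path along which the running sum drops by at least $x$ over some interval contains a canonical sub-path starting at one of its vertices whose running sum first crosses below $-x$ at its endpoint. Since any sub-sequence of consecutive vertices of a path is itself a path, such sub-paths are exactly the objects counted in Lemma \ref{l:inf}, and their expected number in $K_n$ is bounded by $ne^{-q_n x}$.

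For (i), if a path $\gamma$ satisfies $S_j(\gamma)-S_i(\gamma)\le -b_n$ for some $0\le i\le j\le|\gamma|$, I restart the running sum at $\gamma_i$ and truncate at the first step $k\ge 1$ where the reset sum drops below $-b_n$; such a $k$ exists because the required drop is already attained by step $j-i$. This produces a sub-path of the type counted by Lemma \ref{l:inf} with $x=b_n$. Letting $\mathcal N$ denote the total number of such sub-paths in $K_n$, the bad event is contained in $\{\mathcal N\ge 1\}$, and Markov gives $\p(\mathcal N\ge 1)\le ne^{-q_n b_n}$. Since $q_n=1-\frac{1}{en}$ and $b_n=\ln n+o(\ln n)$ with $b_n-\ln n\to\infty$, one checks $q_n b_n-\ln n=(b_n-\ln n)-b_n/(en)\to\infty$, so $ne^{-q_n b_n}\to 0$.

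For (ii), fix a path $\gamma$ and, for each $i$ in the set $B(\gamma)$ described in the statement, let $j(i)$ be the first $j\ge i$ with $S_j(\gamma)-S_i(\gamma)\le-\Delta_n$. Then the sub-path $(\gamma_i,\ldots,\gamma_{j(i)})$ is counted by Lemma \ref{l:inf} with $x=\Delta_n$, and distinct $i\in B(\gamma)$ yield sub-paths with distinct starting vertex $\gamma_i$ (the vertices of $\gamma$ being pairwise distinct), hence distinct sub-paths of $K_n$. If $\mathcal M$ denotes the total number of such sub-paths in $K_n$, we obtain $|B(\gamma)|\le\mathcal M$ uniformly in $\gamma$. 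Lemma \ref{l:inf} yields $\e[\mathcal M]\le ne^{-q_n\Delta_n}$, and Markov gives
\begin{equation*}
\p\!\left(\mathcal M>Mne^{-\Delta_n}\right)\le\frac{e^{(1-q_n)\Delta_n}}{M}=\frac{e^{\Delta_n/(en)}}{M}\le\frac{2}{M}
\end{equation*}
for all $n$ large, where the last step uses $\Delta_n=o(n)$. The only point requiring care is the uniformity in $\gamma$ in (ii), which is obtained by the disjointness argument above; otherwise the proof is a routine Markov estimate, made possible by the fact that $q_n\to 1$ fast enough compared with the thresholds $b_n$ and $\Delta_n$.
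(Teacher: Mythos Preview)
Your proof is correct and follows essentially the same route as the paper: extract from each drop a canonical sub-path that first crosses below the threshold, bound the expected number of such sub-paths via Lemma~\ref{l:inf}, and conclude by Markov's inequality. One small slip: in this paper a path is only required to have distinct \emph{edges}, not distinct vertices, so your parenthetical ``the vertices of $\gamma$ being pairwise distinct'' is not justified; the injectivity $i\mapsto(\gamma_i,\ldots,\gamma_{j(i)})$ still holds, though, since for $i\neq i'$ equality of the two sub-paths would force $(\gamma_i,\gamma_{i+1})=(\gamma_{i'},\gamma_{i'+1})$ as edges of $K_n$, contradicting edge-distinctness of $\gamma$.
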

    
 \begin{proof} The proofs rely on the following simple argument, already noted  in \cite{ding,ding_goswami,ding_sun_wilson}.  If a path as in (i) existed, we could extract a subpath $\widetilde{\gamma}$ of some length $\widetilde{\ell}$ such that $S_j(\widetilde{\gamma})\ge -b_n$ for all $0\le j<\widetilde{\ell}$ and $S_{\widetilde{\ell}}(\widetilde{\gamma})< -b_n$. The probability to find such a path $\widetilde\gamma$ goes to $0$ by Lemma \ref{l:inf}. This yields (i).  
 
 For (ii), we proceed in a similar way. Let $r= \lceil Mne^{-\Delta_n} \rceil$. If there is a path $\gamma$ such that there are at least $r$ indices $i$ for which $S_j(\gamma)- S_i(\gamma) \le - \Delta_n$ for some $j\ge i$, then we can extract $r$-subpaths $\widetilde\gamma$ such that $S_j(\widetilde{\gamma})\ge -\Delta_n$ for all $0\le j<\widetilde\ell:=|\widetilde\gamma|$ and $S_{\widetilde{\ell}}(\widetilde{\gamma})< -\Delta_n$. Applying Lemma \ref{l:inf} once again, the expected number of such subpaths $\widetilde\gamma$ is less than $n e^{-q_n \Delta_n}$. The conclusion then follows from Markov's inequality.  
 \end{proof}

In view of Corollary \ref{cor:jump} (i) and \eqref{comp:moment1}, the upper bound for $\L(n,\lambda)$ in Theorem \ref{t:main} is a consequence of the following lemma. 

\begin{lemma}\label{main:upper}  Let   $c>0$ 
be an arbitrary constant. For any $d \in (0, \frac{\pi^2}2)$, there exists $\Delta_0=\Delta_0(c, d)\ge 1$ such that for all $\Delta\ge \Delta_0$ and $L\ge \Delta^{2+c}$, we have 
\[
 \p\Big( Y_L \le \frac\Delta{c}     ,\, \min_{0\le i  \le j \le L}(Y_j-Y_i) \ge - \Delta \Big) \le  e^{- \frac{d L}{\Delta^2}} .
\]
\end{lemma}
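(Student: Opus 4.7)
The plan is to exploit the joint endpoint-and-drawdown constraints to confine the walk, within each piece of a suitable decomposition, to a strip of effective width $\Delta$ (rather than the naive $2\Delta$), so that a Mogulski\u{\i}-type small-deviation estimate yields the sharp Dirichlet constant $\pi^2/2$. The basic observation is that on the event $\{Y_L \le \Delta/c,\; Y_j - Y_i \ge -\Delta \;\forall\,i \le j\}$, taking $j = L$ in the drawdown inequality yields $Y_i \le Y_L + \Delta \le \Delta(1+1/c)$ for every $i \le L$. Hence the running maximum $M_L := \max_{i\le L} Y_i$ is bounded by $\Delta(1+1/c)$: the walk cannot climb arbitrarily high, and this is the key extra input absent from the plain strip argument (which would only give the constant $\pi^2/8$).

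\textbf{Level decomposition.} Fix $\varepsilon = \varepsilon(c,d) \in (0,1)$ to be chosen later, set $\eta := \varepsilon \Delta$, and define stopping times $T_0 := 0$, $T_{k+1} := \inf\{j > T_k : Y_j \ge Y_{T_k} + \eta\}$. By telescoping, $Y_{T_K} \ge K\eta$; combined with $Y_{T_K} \le M_L \le \Delta(1+1/c)$, the number of finite $T_k \le L$ satisfies $K \le K_0 := \lceil (1+1/c)/\varepsilon \rceil$, a constant depending only on $c$ and $\varepsilon$. For any $j \in [T_k, T_{k+1})$, the definition of $T_{k+1}$ gives $Y_j < Y_{T_k} + \eta$, and the drawdown from $T_k$ gives $Y_j \ge Y_{T_k} - \Delta$, so the shifted walk $(Y_{T_k+i}-Y_{T_k})_i$ stays in the strip $[-\Delta, \eta]$ of width $\Delta(1+\varepsilon)$ for $0 \le i < T_{k+1}-T_k$.

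\textbf{Combining strip bounds.} Apply a sharp Mogulski\u{\i}-type small-deviation estimate (expected to be furnished by Section~\ref{s:rw-brw}): for any starting position in $[-\Delta,\eta]$ and any $\tau \ge 1$,
\[
\p\bigl(Y_i \in [-\Delta, \eta] \text{ for all } i \le \tau\bigr) \le C(\varepsilon)\exp\!\left(-\frac{\pi^2 \tau}{2\Delta^2(1+\varepsilon)^2}(1-o_\Delta(1))\right),
\]
with $C(\varepsilon)$ independent of $\Delta$, $\tau$, and the starting position. Applying the strong Markov property successively at each $T_k$ and using $\sum_k (T_{k+1}\wedge L - T_k\wedge L) = L$,
\[
\p(\text{event}) \le C(\varepsilon)^{K_0+1} \exp\!\left(-\frac{\pi^2 L}{2\Delta^2(1+\varepsilon)^2}(1-o_\Delta(1))\right).
\]
Given $d < \pi^2/2$, choose $\varepsilon$ so that $\pi^2/(2(1+\varepsilon)^2) > d$. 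Since $K_0$ depends only on $(c,\varepsilon)$ and $L/\Delta^2 \ge \Delta^c \to \infty$, the prefactor $C(\varepsilon)^{K_0+1}$ is absorbed by the exponential for $\Delta \ge \Delta_0(c,d)$, giving $\p(\text{event}) \le e^{-dL/\Delta^2}$.

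\textbf{Main obstacle.} The crux is the Mogulski\u{\i}-type small-deviation bound with sharp constant $\pi^2/2$, uniform over starting positions in the strip. This is the analogue of the Dirichlet eigenvalue $\pi^2/(2W^2)$ of $-\tfrac12\partial_x^2$ on an interval of width $W$, which for centered random walks with finite variance should follow from a spectral/change-of-measure argument (tilting by the principal eigenfunction $\sin(\pi x/W)$ extended suitably across a mesh). The subtleties are the one-sided step distribution $\mathrm{Exp}(1)-1$ (whose lower tail is trivial) and the need to absorb the overshoot at the boundary of the strip without losing a constant factor depending on $\Delta$; presumably these are handled in Section~\ref{s:rw-brw}.
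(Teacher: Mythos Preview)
Your ladder-time decomposition is a genuinely different route from the paper's. The paper instead fixes a deterministic time grid of blocks $I_j$ of length $\Delta^{2+\eta}$, tracks the running maximum rounded up to a multiple of $\eta\Delta$, and argues that on any block where this rounded maximum does not increase the walk is trapped in a strip of width $(1+\eta)\Delta$; since the rounded maximum is bounded by $(1+1/c+1)\Delta$, at most a constant number $C/\eta$ of blocks see an increase (this plays the role of your $K_0$). The Markov property is then applied at the \emph{deterministic} block endpoints, Lemma~\ref{l:mogulski} bounds each good block, and a crude union bound over the $\le 2^{L/\Delta^{2+\eta}}$ possible locations of the bad blocks is absorbed since $L/\Delta^{2+\eta}=o(L/\Delta^2)$. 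Your approach trades the time grid and that union bound for the ladder structure, which is conceptually tidier.

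There is, however, a real gap in your ``Combining strip bounds'' step. The durations $\tau_k=(T_{k+1}\wedge L)-(T_k\wedge L)$ are random, and you cannot simply plug the Mogulski\u{\i} bound $\p(\text{stay in strip for }\tau)\le Ce^{-\lambda\tau}$ into a product over $k$ and then invoke $\sum_k\tau_k=L$ to collapse it to $C^{K_0+1}e^{-\lambda L}$. If you make the strong Markov argument precise---conditioning on $\mathcal F_{T_{k_0}}$ on the event $\{K=k_0\}$ and iterating---you arrive at
\[
\p(E,\,K=k_0)\;\le\; C\,e^{-\lambda L}\Big(\e\big[e^{\lambda\sigma}\,{\bf 1}_{\{\rho>\sigma\}}\big]\Big)^{k_0},
\]
where $\sigma,\rho$ are the exit times of $[-\Delta,\varepsilon\Delta]$ through the top and bottom. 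Since $\p(\rho\wedge\sigma>t)$ itself decays like $e^{-\lambda t}$, the expectation $\e[e^{\lambda\sigma}{\bf 1}_{\{\rho>\sigma\}}]$ diverges at the critical rate; you must work with a strictly smaller rate $\lambda'<\lambda$, for which $\e[e^{\lambda'(\rho\wedge\sigma)}]=O((\lambda-\lambda')^{-1})=O(\Delta^2)$, producing a prefactor $\Delta^{O(K_0)}$ rather than your claimed constant $C(\varepsilon)^{K_0+1}$. (Alternatively, union-bound over the $\le L^{K_0}$ deterministic values of $(T_1,\dots,T_{K_0})$.) Either prefactor is then absorbed because $L/\Delta^2\ge\Delta^c\to\infty$, so your argument can be completed---but the step needs to be written out. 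Note also that the strip bound actually supplied by Lemma~\ref{l:mogulski} carries a prefactor $e^{\Delta^\upsilon}$, not a $\Delta$-free constant $C(\varepsilon)$; this is again harmless for the same reason but should be tracked.
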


The proof of Lemma \ref{main:upper} is postponed to Section \ref{sub:rw}.

\begin{proof}[Proof of the upper bound in Theorem \ref{t:main}]  Recall by Claim 2.4 of \cite{ding} that if there is a path of length greater than $\ell$ with average weight below $\lambda$, then there is a path of length in $\lb \ell,2\ell\rb$ with average weight below $\lambda$. Then for $\lambda= \frac{\alpha}{\ln^2 n}$,  $t> \frac{1}{\frac{\pi^2}{2}-\alpha}$,  and  $\ell= \lfloor t \ln^3 n\rfloor$, \begin{align}     & \p\Big(\L(n, \lambda) \ge \ell\Big) 
 \nonumber \\
  \le  & \p\Big(\cup_{L=\ell}^{2 \ell}\{ \exists \gamma \in  {\mathcal P}_L: S_L(\gamma) \le \lambda L \}\Big) \nonumber 
\\
 \le & \p\Big(\cup_{L=\ell}^{2 \ell}\{ \exists \gamma \in  {\mathcal P}_L: S_L(\gamma) \le \lambda L,  \min_{0\le i \le j \le L} (S_j(\gamma)- S_i(\gamma)) > -b_n\}\Big) + o(1), \label{eq:L>ell-bn}
\end{align}

\noindent where $b_n:= \ln n + \ln \ln n$ and we have used Corollary \ref{cor:jump} (i) in the second inequality. By the union bound, it is enough to show that there is some positive constant $\upsilon$ such that for all large $n$, uniformly in $L \in \lb \ell, 2 \ell \rb$, $$ a_n:=\p\Big( \exists \gamma \in  {\mathcal P}_L: S_L(\gamma) \le \lambda L,  \min_{0\le i \le j \le L} (S_j(\gamma)- S_i(\gamma)) > -b_n\Big) \le n^{-\upsilon}.$$

By \eqref{comp:moment1} and \eqref{eq:many-to-one}, $$a_n \le n\, e^{ q_n\lambda L} \, \p\Big(Y_L  \le \lambda L,  \min_{0\le i \le j \le L} (Y_j - Y_i) > -b_n\Big). $$

Recall that $q_n= 1-\frac{1}{en}$.  Let $d \in (0, \frac{\pi^2}{2})$ be such that $d>\alpha$ and $\upsilon:= \frac12((d-\alpha)t-1)>0$. Applying Lemma \ref{main:upper}  with $\Delta= b_n$, we get that  $a_n \le n \exp\big(q_n\lambda L - d \frac{L}{b_n^2}\big)$.  Since $q_n\lambda- d b_n^{-2} \sim - (d-\alpha) \ln^{-2} n$, and $L\ge \ell \sim t \ln^3n$, we get that $a_n \le n^{-\upsilon}$, which completes the proof of the upper bound in Theorem \ref{t:main}.
    \end{proof}

In view of Corollary \ref{cor:jump} (ii) and \eqref{comp:moment1}, the upper bound for $\L(n,\lambda)$ in Theorem \ref{t:main2} is a consequence  the following lemma whose proof is postponed to Section \ref{sub:rw}.

\begin{lemma}\label{main2:upper}  Let $c>0$ be an arbitrary constant. For any $d \in (0, \frac{\pi^2}2)$, there exists $\Delta_0=\Delta_0(c, d)\ge 1$ such that for all $\Delta \ge \Delta_0$ and $L\ge \Delta^{2+c}$, we have 
\[
 \p\Big( \max_{j\in \lb 0, L\rb} Y_j \le \frac{L}{\Delta^{1+c}}     ,\, m_\Delta(L) \le \frac{L}{\Delta^{2+c}} \Big) \le  e^{- \frac{d L}{\Delta^2}} ,
\] where $m_\Delta(L)= \#\{j\in \lb 0,L\rb: \inf_{\ell\in \lb j,L\rb} Y_\ell\le Y_j- \Delta\}$.
\end{lemma}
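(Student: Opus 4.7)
The plan is to partition $\lb 0, L \rb$ into $N := \lfloor L/s \rfloor$ consecutive blocks $I_k = \lb ks, (k+1)s \rb$ of length $s := \lceil \Delta^{2+c/2}\rceil$, extract from the bound on $m_\Delta(L)$ a large collection of blocks on which $Y$ has no $\Delta$-drop, and then conclude via a strip estimate together with the Markov property. Let $\mathcal{B} := \{j : \inf_{\ell \in \lb j, L \rb} Y_\ell \leq Y_j - \Delta\}$, so $|\mathcal{B}| = m_\Delta(L)$. On the event $E$ of the lemma, $|\mathcal{B}| \leq L/\Delta^{2+c} = N\Delta^{-c/2}$, hence at most $N\Delta^{-c/2}$ blocks contain an element of $\mathcal{B}$; the remaining at least $N(1 - \Delta^{-c/2})$ blocks will be called \emph{clean}. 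For a clean block $I_k$, every $j \in I_k$ satisfies $\inf_{\ell \in \lb j, L \rb} Y_\ell > Y_j - \Delta$; restricting the infimum to $\ell \in I_k$ yields $\min_{ks \leq i \leq j \leq (k+1)s}(Y_j - Y_i) \geq -\Delta$, so the walk experiences no $\Delta$-drop within the block.

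The second ingredient I need is an unconditional strip estimate: for any $d' \in (d, \pi^2/2)$ and $\Delta$ sufficiently large,
\[
\p\Big(\min_{0 \leq i \leq j \leq s}(Y_j - Y_i) \geq -\Delta\Big) \leq e^{-d' s/\Delta^2}.
\]
To derive this from Lemma \ref{main:upper}, I would split on the size of $Y_s$: the event $\{Y_s \leq \Delta/c_1,\, \text{no } \Delta\text{-drop}\}$ is bounded by Lemma \ref{main:upper}, while $\{Y_s > \Delta/c_1\}$ is bounded by a Chernoff estimate using the moment generating function of $\mathrm{Exp}(1)-1$. Choosing $c_1$ of order $\Delta^2/s$, so that the threshold $\Delta/c_1$ sits at about $\sqrt{2d'}$ Gaussian standard deviations, balances both contributions at $e^{-d' s/\Delta^2}$.

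To combine the two ingredients, I would use the strong Markov property at the deterministic times $\tau_k := ks$: the sub-walks $(Y_{\tau_k + m} - Y_{\tau_k})_{0 \leq m \leq s}$ are i.i.d.\ copies of $(Y_m)_{0\leq m\leq s}$ and independent of $\mathcal{F}_{\tau_k}$, so for any fixed $G \subset \{0, \ldots, N-1\}$,
\[
\p\Big(\text{no }\Delta\text{-drop in } I_k \text{ for every } k \in G \Big) \leq \big(e^{-d' s/\Delta^2}\big)^{|G|}.
\]
A union bound over positions of the bad blocks, using $\binom{N}{\lfloor N\Delta^{-c/2}\rfloor} \leq (e\Delta^{c/2})^{N\Delta^{-c/2}}$, leads to
\[
\log \p(E) \leq -\frac{d'L}{\Delta^2}\big(1 - \Delta^{-c/2}\big) + \frac{L}{\Delta^{2+c}}\Big(1 + \tfrac{c}{2}\log \Delta\Big).
\]
Both correction terms are $o(L/\Delta^2)$ as $\Delta \to \infty$, and choosing $d < d' < \pi^2/2$ with $\Delta_0$ large yields the desired bound $\p(E) \leq e^{-dL/\Delta^2}$.

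The main obstacle is the unconditional strip estimate: Lemma \ref{main:upper} comes with the $Y_s$-tail condition, which must be removed while preserving the spectral constant $\pi^2/2$. The Chernoff split above accomplishes this because $d$ is strictly less than $\pi^2/2$, leaving a margin that can absorb the $\sqrt{2}$-factor entering the Gaussian rate $I(x) \sim x^2/2$ as well as the polynomial $\binom{N}{\cdot}$ overhead.
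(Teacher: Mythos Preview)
Your proof has a genuine gap: the unconditional strip estimate
\[
\p\Big(\min_{0 \le i \le j \le s}(Y_j - Y_i) \ge -\Delta\Big) \le e^{-d' s/\Delta^2}
\]
is \emph{false} for any $d'\in(\tfrac{\pi^2}{8},\tfrac{\pi^2}{2})$. For Brownian motion, L\'evy's theorem identifies the drawdown process $(M_t-B_t)_{t\ge 0}$ with $(|W_t|)_{t\ge 0}$, so ``no $\Delta$-drop on $[0,T]$'' has the same probability as $\{W_t\in(-\Delta,\Delta),\,t\le T\}$, a strip of width $2\Delta$, hence decay rate $\tfrac{\pi^2}{8\Delta^2}$, not $\tfrac{\pi^2}{2\Delta^2}$. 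The random-walk version inherits this constant via Mogul'skii. Your derivation through Lemma~\ref{main:upper} plus Chernoff cannot repair this: Lemma~\ref{main:upper} requires a \emph{fixed} constant $c_1$, so its threshold $\Delta/c_1$ is of order $\Delta$, whereas the Chernoff split needs a threshold of order $s/\Delta=\Delta^{1+c/2}$ to produce rate $d's/\Delta^2$. With a fixed $c_1$ one has $\p(Y_s>\Delta/c_1)\to\tfrac12$ since $\sqrt{s}=\Delta^{1+c/4}\gg\Delta$; letting $c_1\sim\Delta^{-c/2}$ violates the hypothesis $\Delta\ge\Delta_0(c_1,d)$ of Lemma~\ref{main:upper}, and tracing through that proof shows the correction term $(C/\eta)\Delta^\eta$ with $C\sim 1/c_1$ then matches the main term $s/\Delta^2$ rather than being absorbed.

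The underlying reason is that you never use the hypothesis $\max_{\lb 0,L\rb}Y\le L/\Delta^{1+c}$. On a clean block you only extract the one-sided constraint ``no drop of size $\Delta$''; the walk is free to climb, so the effective confinement has width $2\Delta$. The paper's proof uses \emph{both} hypotheses: the bound on $m_\Delta(L)$ controls bad blocks, while the max bound caps the discretised running maximum $M_j$ and hence the number of blocks where $M_j$ jumps. On the remaining blocks one gets a genuine two-sided confinement $Y\in[M_j-(1+\eta)\Delta,M_j]$ of width $(1+\eta)\Delta$, and Mogul'skii then delivers the correct constant $\tfrac{\pi^2}{2}$. To salvage your argument you must bring the max bound into the block analysis, for instance by also discretising the running maximum as the paper does.
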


\begin{proof}[Proof of the upper bound in Theorem \ref{t:main2}] Let $n$ be large and  $a \in (0, 1)$. Consider $\ell= \lceil ne^{- \frac{\pi}{\sqrt{ 2 \beta_n}} (1-a)}\rceil$,  $r=e^{\frac{\pi}{\sqrt{2\beta_n}}(1-\frac{a}2)}$ and  let $m'_\Delta(L)$ be the number of $i\in \lb 0,L\rb$ such that $\inf_{j \in \lb i,L\rb} S_j\le S_i- \Delta$ with $\Delta:=\ln r$.  Let $b_n=\ln n + \ln\ln n$.  By the same argument leading to \eqref{eq:L>ell-bn}, but using Corollary \ref{cor:jump} (ii) in addition, we obtain that  for any $M>2$, $$\p\Big(\L(n, \beta_n) \ge \ell\Big) \le  \sum_{L=\ell}^{2\ell} \widetilde a_L + \frac{2}{M}+ o(1),$$

\noindent where for $L\in \lb \ell, 2\ell \rb$, $$ \widetilde a_L:= \p\Big(\exists \gamma\in {\mathcal P}_L: S_L(\gamma) \le \beta_n L, m'_\Delta(L) \le M \frac{n}{r},\, \min_{0\le i \le j \le L} (S_j(\gamma)- S_i(\gamma)) >-  b_n \Big).$$

By \eqref{comp:moment1} and \eqref{eq:many-to-one}, \begin{align*}    \widetilde a_L & \le n \, e^{\beta_n L}  \p\Big( Y_L\le \beta_n L , \, m_\Delta(L) \le M \frac{n}{r},\,  \min_{0\le i \le j \le L} (Y_j - Y_i ) >- b_n\Big)
\\
& \le n \, e^{\beta_n L}  \p\Big(   \max_{i \in \lb 0, L\rb} Y_i \le 2 \beta_n L, \,  m_\Delta(L) \le M \frac{n}{r}\Big)
 ,
\end{align*} where $m_\Delta(L)$ is as defined in Lemma \ref{main2:upper} and in the last inequality, we have used the fact that $\max_{i \in \lb 0, L\rb} Y_i\le Y_L+b_n \le \beta_n L+ b_n \le 2 \beta_n L$. Choose $d\in (0, \frac{\pi^2}2)$ 
  such that $\eta:= \frac{2d}{\pi^2 (1-\frac{a}2)^2} - 1>0$. Fix an arbitrary $c\in (0, 1)$. If $\varepsilon>0$ is chosen small enough in Theorem \ref{t:main2}, we have $L\ge \Delta^{2+c}$, $M \frac{n}{r} \le L \Delta^{-2-c}$ and $2 \beta_n L \le L \Delta^{-1-c}$.  Note that $\Delta$ is arbitrarily large when $\varepsilon$ goes to $0$.   Lemma \ref{main2:upper} yields that $$ \widetilde a_L \le n e^{\beta_n L} e^{- d \frac{L}{\Delta^2}} = n e^{- \eta \,\beta_n L}. $$
 
 Notice that for all large $n$, $\beta_n L \ge \beta_n \ell \ge n^{a'}$ for some positive constant $a'$. Hence   $\sum_{L=\ell}^{2\ell} \widetilde a_L \le n^2  e^{- \eta\, n^{a'}}\to 0$ and we conclude the proof by letting $n \to\infty$ then $M\to\infty$.  \end{proof}

\section{Proofs of the lower bounds}
\label{s:lower}

We consider a branching random walk $\V$ on $\r$. It starts with a certain number of particles at $0$. At each discrete time $k\ge 0$, each particle of the current population splits into an infinite number of children, whose displacements from their parent  are given by a copy of 
\begin{equation} \label{eq:BRW-V}   
\sum_{i=1}^\infty {\delta}_{\{e T_i -1\}} 
\end{equation}

\noindent where $0<T_1<T_2<\ldots$ are the atoms of a Poisson point process of intensity $1$ on $\r_+$. The position of a particle $u$ is denoted by $\V(u)$.
The offspring distribution corresponds to the limiting law  of $\sum_{u=2}^{n} \delta_{X(1,u)}$, so that, when $n$ is large, exploring the graph around a given vertex locally approximates the branching random walk $\V$. To make it precise, we will  couple labels $X(u,v)$ and copies of the Poisson point process $(T_i)_{i\ge 1}$ as follows. 

\medskip

Consider $(X_i)_{1\le i \le n-1}$ a family of $(n-1)$ i.i.d. copies of $X= n e \mbox{Exp}(1)-1$.  Its increasing order statistics $ X_{(0)}:=-1 < X_{(1)}< ...< X_{(n-1)}$ verify that the variables $X_{(i)}-X_{(i-1)}$ are independent, distributed as $\frac{n e }{n-i}\text{Exp}(1)$.   One can couple it with a Poisson point process $T_1<T_2<\ldots$ of intensity $1$, by 
\[ 
    X_{(i)}-X_{(i-1)} = \frac{n e}{n-i} (T_{i}-T_{i-1}), \qquad i\in \lb 1, n-1\rb, 
\]
where we set $T_0=0$.  In particular,  for all $i\in \lb 1, n-1\rb$, \begin{equation}\label{coupling}
   e T_i-1 \le  X_{(i)} \le   \frac{n}{n-i} (e T_{i}-1) + \frac{i}{n-i}.
\end{equation}

For each $u\in K_n$, this gives a coupling between $(X(u,v))_{v\neq u}$ and a copy of $(T_i)_{i\ge 1}$.  We further need the following simple observation.  Let $m \in \lb 1, n-1 \rb$. If we choose   $m$  points uniformly from $(X_{(i)})_{1\le i \le n-1}$ and arrange them in increasing order, then they have the same distribution as  the increasing order statistics of $(X_i)_{1\le i \le m}$. In other words, there exists a coupling in the sense of \eqref{coupling}, between $X(u, v)$ when $v$ varies over a subset of $K_n$ with $m$ vertices, and the family of $m$  points  chosen uniformly from $(T_i)_{1\le i \le n-1}$.

Let $N$ be an integer between $1$ and $n-1$.  We denote by $\V^N$ the branching random walk $\V$  in which, at each step, we only keep the $N$ leftmost particles of the next generation. For a particle $u$ in $\V^N$, let   $|u|$ denote its generation in the genealogical tree of $\V^N$, and let $\V^N(u)$ denote its position in $\r$.

\begin{proposition}\label{p:first_moment} Fix $a\in [0, 1]$ and $t>0$.    If $\V^N$ starts with $\lfloor N^a\rfloor$ particles located at $0$, then for any sequence $\ell_N$ satisfying $\frac{\ell_N}{\ln^3 N} \to t$,  \begin{equation}  \frac{1}{\ln N}  \min_{|u|=\ell_N} \V^N(u)   \,\cvL2\, -a+\frac{\pi^2}{2}t  , \qquad N \to\infty. \label{Nbrw-cvL2}\end{equation}
\end{proposition}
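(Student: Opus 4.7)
The plan is to recognize $\V^N$ as a Brunet--Derrida $N$-BRW in the \emph{boundary regime} of branching random walks, and reduce Proposition~\ref{p:first_moment} to the speed theorem of B\'erard--Gou\'er\'e~\cite{berard_gouere} combined with a tailored analysis of the initial configuration $\lfloor N^a\rfloor$ particles at $0$. A direct computation on the reproduction law~\eqref{eq:BRW-V} yields
\[
\psi(\theta):=\log\e\Big[\sum_i e^{-\theta(eT_i-1)}\Big]=\theta-1-\log\theta,
\]
so $\psi(1)=\psi'(1)=0$ and $\psi''(1)=1$. Hence the critical tilt is $\theta^*=1$, the canonical centered walk produced by this tilt is exactly the walk $Y$ of Section~\ref{s:upper} (mean $0$, variance $1$), and the many-to-one formula starting from $\lfloor N^a\rfloor$ roots takes the simple form
\[
\e\Big[\sum_{|u|=k}g(\V(u))\Big]=\lfloor N^a\rfloor\,\e\bigl[g(Y_k)\,e^{Y_k}\bigr].
\]

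For the \emph{lower bound} $\min_{|u|=\ell_N}\V^N(u)\ge\bigl(-a+\tfrac{\pi^2 t}{2}-\varepsilon\bigr)\ln N$ with high probability, observe that $\V^N\subseteq\V$ as particle sets, so it suffices to control the unselected BRW. A branching analog of Corollary~\ref{cor:jump}(i), proved by the same many-to-one argument as Lemma~\ref{l:inf}, shows that with high probability every ancestral trajectory stays above $-\Delta:=-(\ln N+\ln\ln N)$. The many-to-one formula then bounds the expected number of $\V$-trajectories ending below $x:=\bigl(-a+\tfrac{\pi^2 t}{2}-\varepsilon\bigr)\ln N$ while staying above $-\Delta$ by
\[
\lfloor N^a\rfloor\,e^{x}\,\p\bigl(Y_{\ell_N}\le x,\ Y_j\ge -\Delta\ \forall\, j\le\ell_N\bigr),
\]
and the spectral small-ball estimate underlying Lemma~\ref{main:upper} (applied with $\Delta\simeq\ln N$ and $L=\ell_N$) bounds this probability by $e^{-\pi^2\ell_N/(2\Delta^2)}=N^{-\pi^2 t/2+o(1)}$. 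The prefactors combine to $N^{-\varepsilon+o(1)}\to 0$. Note that the factor $\lfloor N^a\rfloor$ is exactly what produces the shift $-a\ln N$ at the endpoint.

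For the matching \emph{upper bound} $\min_{|u|=\ell_N}\V^N(u)\le\bigl(-a+\tfrac{\pi^2 t}{2}+\varepsilon\bigr)\ln N$, I would produce an explicit low-lying particle by a second-moment argument inside a moving corridor of width $\Delta\simeq\ln N$ slanting at speed $v_N=\pi^2/(2\ln^2 N)$. The argument has two phases: an \emph{initial phase} of order $\ln N$ generations, during which the $\lfloor N^a\rfloor$ initial particles produce a subpopulation that reaches level $-a\ln N$ (the shift $-a\ln N=-\log N^a$ is the extreme-value shift for $N^a$ independent walks, verified here by matching the first moment above with a second moment); and a \emph{propagation phase} of length $\ell_N$ during which the tight $N$-particle cloud advances at the selected speed $v_N$ à la B\'erard--Gou\'er\'e. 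Combining the two phases produces a trajectory ending below the target with positive probability, which is upgraded to high probability by a renewal argument over blocks of length $\sim\ln^3 N$.

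The $L^2$ convergence then follows from convergence in probability combined with a deterministic two-sided envelope on $(\ln N)^{-1}\min_{|u|=\ell_N}\V^N(u)$, itself obtained from Lemma~\ref{l:inf}-type first-moment estimates on the tails. The main obstacle is calibrating both moment estimates to yield the sharp constant $\pi^2/2$: this requires the Sturm--Liouville small-deviation asymptotics for a centered walk confined to a strip of width $\Delta\gg 1$ over time $L\gg\Delta^2$, supplied by the analysis of Section~\ref{s:rw-brw} and encoded in Lemmas~\ref{main:upper}--\ref{main2:upper}. A secondary difficulty is that $\V^N$ is not stochastically monotone in the initial condition in any naive sense, so the ``effective shift by $-a\ln N$'' produced by the $\lfloor N^a\rfloor$ particles must be established by explicit moment computation rather than by a comparison argument.
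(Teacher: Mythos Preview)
Your lower bound (ruling out $\min\V^N<\zeta\ln N$) is essentially the paper's part~(ii), with two imprecisions worth fixing. First, the constraint you need is ``no ancestral drop exceeds $\Delta$'' (the hypothesis of Lemma~\ref{main:upper}), not merely ``stays above $-\Delta$''; a one-sided barrier does not force the walk into a strip and the Mogul'skii bound does not apply. Second, the no-drop event must be established inside $\V^N$ (at most $N$ particles per generation), not in the free $\V$: a union bound over all particles of $\V$ diverges since each generation is infinite. The paper handles this via the event $F_N$ and the estimate~\eqref{eq:drop}, summing over at most $N$ choices of ancestor at each of $\ell_N$ levels.

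The genuine gap is in your upper bound. A second-moment corridor argument produces a low-lying trajectory in the \emph{free} BRW $\V$; you never explain why that trajectory survives the $N$-selection and hence lies in $\V^N$. The paper's essential device is the comparison Lemma~\ref{l:kil} (a variant of \cite[Lemma~1]{berard_gouere}): if $\V^{\rm kil}$ is $\V$ killed outside the corridor and $\#\V^{\rm kil}_k\le N$ for all $k\le\ell_N$, then $\min_{|u|=\ell_N}\V^N(u)\le\min_{|u|=\ell_N}\V^{\rm kil}(u)$. This reduces the upper bound to two moment computations on $\V^{\rm kil}$ (Lemmas~\ref{l:barrier} and~\ref{l:beginning}): one showing the corridor population stays below $N$ with high probability, the other showing some particle reaches the target. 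Your ``\`a la B\'erard--Gou\'er\'e'' does not supply this coupling, and their speed theorem cannot be invoked as a black box at the finite-time joint scaling $\ell_N\sim t\ln^3 N$. Two smaller issues: your initial phase of ``order $\ln N$'' generations is too short to reach depth $a\ln N$ inside a strip of width $\sim\ln N$ (one needs $\gg\ln^2 N$ steps for the small-deviation asymptotics; the paper takes $\lfloor\ln^{5/2}N\rfloor$); and contrary to your closing remark, $\min\V^N$ \emph{is} stochastically monotone in the initial configuration in the relevant sense (fewer initial particles gives a larger minimum), which the paper exploits to deduce $a=1$ from $a<1$.
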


We will apply the above proposition to two cases: $a=1$ and $a=0$.
The proof of Proposition \ref{p:first_moment} is postponed to Section \ref{sub:brw}.

In what follows, we construct a coupling between $\V^N$ and an exploration process on $K_n$. 
We will color particles of $\V^{N}$ in blue, red or purple. Blue particles will correspond to  vertices of $K_{n}$ and will be the particles of interest. 

\subsection{Construction of an exploration process $(Z_j, {\mathcal Q}_j)_{j\ge 0}$ in $K_n$ and its coupling with $\V^N$.}
 \label{s:exploration}
 
For each $j\ge 0$, we define a  set $Z_j$ of so-called {\it active} vertices of $K_n$ and a set of paths ${\mathcal Q}_j\subset {\mathcal P}_j$  such that each vertex  in $Z_j$ is associated with a unique  path in ${\mathcal Q}_j$ ending at this point. The set $Z_j$ will correspond to the blue particles of $\V^{N}$ at time $j$. We will therefore write $u$ either for the blue particle or its corresponding  vertex in $K_n$.  For each active vertex $u$ at step $j$, we will write $V(u)$  for $S_j(\gamma)$ where $\gamma$ is the path in ${\mathcal Q}_j$ ending at $u$.

Let $G_0$ be a complete subgraph of $K_n$. Let $Z_0\subset G_0$ be a set of $N_0\le N$ active vertices at time $0$. We start  the branching random walk $\V$ with  $N_0$ blue particles,  each particle corresponding to an active vertex in $Z_0$.  We let $\mathcal Q_0$ be the collection $\{\{u\},\, u\in Z_0\}$.  We denote by $\#G_0$ the number of vertices in $G_0$. Let $m:=\#G_0- \#Z_0$.   Each $u\in Z_0$ corresponds to a blue particle in $\V$, still denoted by $u$. We have $\V(u)=V(u)=0$. We  choose $m$ children $v$ of $u$ in $\V$ uniformly among the $n-1$ leftmost ones and color them {\it blue.}  By the coupling described above, for each blue particle $u$, $\V(v)-\V(u)$ is coupled with $X(u, v)$ for $v \in G_0 \backslash Z_0$ (where, as for $u$, the variable $v$ denotes both a particle in $\V$ and a vertex in $G_0$).  Let $i_v$ be the index of the point $T_i$ corresponding to $v$ in the coupling \eqref{coupling}. We have  $$X(u,v) = X_{(i_v)} \le \frac{n}{n-i_v} e T_{i_v} -1 =   \frac{n}{n-i_v} (\V(v)-\V(u))+\frac{i_v}{n-i_v}.$$ We also have $X(u,v) \ge \V(v)-\V(u)$. 

We will see that $i_v \le N$ for all $v$ of interest.  We color the remaining children of $u$ in $\V$ {\it red.}  We then retain the $N$ leftmost particles of $\V$ in generation $1$ which is in fact the population of $\V^N$ at generation $1$.  This population may contain red and blue particles.  If a blue particle $v$  is selected in $\V^N$, we select the corresponding edge $(u, v)$ in $G_0$. Note that in this case we indeed have $i_v \le N$. We also remark that several selected edges may point to the same vertex $v$. If this happens, we keep blue the corresponding particle in $\V^N$ which has the minimal position in $\V$,  and color the other particles corresponding to $v$ {\it purple.} We then define $Z_1$ to be the set of blue particles in ${\mathcal V}^N$ at generation $1$, after removing all purple particles. We identify $Z_1$ with a subset of $G_0$: a blue particle of $Z_1$ in $\V^N$ corresponds exactly to one (active) vertex of $G_0$, and the paths $(u, v)_{u\in Z_0, v \in Z_1}$ form  ${\mathcal Q}_1$. Since $i_v\le N$ if $(u,v)\in {\mathcal Q}_1$,  we have $\V^N(v)-\V^N(u)= \V(v)-\V(u)$ and 
 \begin{equation}\label{eq:index}
  V(v)-V(u):=X(u,v) \le \frac{n}{n-N} (\V^N(v)-\V^N(u)) + \frac{N}{n-N}.
\end{equation} 
 Finally,  $G_1$ is the sub-complete graph formed by  removing $Z_0$ and all incident edges in $G_0$.

For the next steps, we proceed by induction on $j$ as long as $Z_j\neq \emptyset$. See Figure \ref{f:coupling}.

 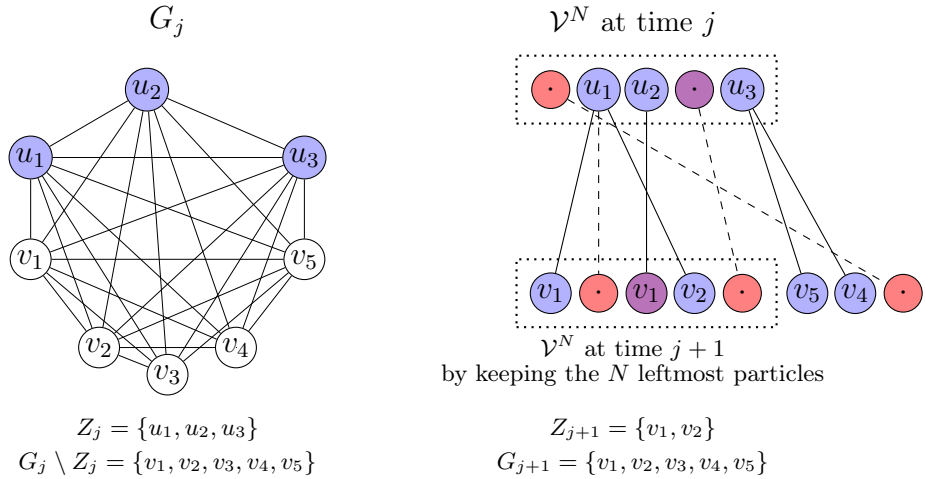
\begin{figure}[ht!]
\centering
 \begin{tikzpicture}[scale=0.9, 
    >=Stealth,
    vertex/.style={circle,draw,inner sep=1pt,minimum size=5mm},
    bluev/.style={vertex,fill=blue!30},
    redv/.style={vertex,fill=red!50},
    purplev/.style={vertex,fill=violet!55},
    thin edge/.style={line width=0.05pt},
    dottedbox/.style={dotted,thick},
    small/.style={font=\scriptsize}
]

\node at (0,4) {$G_j$};

\node[bluev]   (u1) at (-2,2) {$u_1$};
\node[bluev]   (u2) at (-0.3,3) {$u_2$};
\node[bluev]   (u3) at (2,2) {$u_3$};

\node[vertex] (v1)  at (-2,0.5) {$v_1$};
\node[vertex] (v2)  at (-1,-0.8) {$v_2$};
\node[vertex] (v3)  at (0,-1.2) {$v_3$};
\node[vertex] (v4)  at (1,-0.8) {$v_4$};
\node[vertex] (v5)  at (2,0.5) {$v_5$};

\foreach \x/\y in {u1/u2,u2/u3,u1/u3,u1/v1,u1/v2,u1/v3,u2/v1,u2/v2,u2/v3,u3/v1,u3/v2,u3/v3, v1/v2, v1/v3, v2/v3, u1/v4, u2/v4, u3/v4, v1/v4, v2/v4, v4/v4, u1/v5, u2/v5, u3/v5, v1/v5, v2/v5, v3/v5, v4/v5}
  \draw[thin edge] (\x) -- (\y);

\node[small] at (0,-2) {$Z_j=\{u_1,u_2,u_3\}$};
\node[small] at (0,-2.5) {$G_j\setminus Z_j=\{v_1,v_2,v_3, v_4, v_5\}$};

\node at (6.8,4) {\small $\V^N$ at time $j$};

\node[bluev] (r1) at (6.3,3) {$u_1$};
\node[bluev] (r2) at (7,3) {$u_2$};
\node[bluev] (r3) at (8.4,3) {$u_3$};
\node[purplev] (r4) at (7.7,3) {$\cdot$};
\node[redv] (r5) at (5.6,3) {$\cdot$};

\draw[dottedbox] ($(r5)+(-0.5,-0.5)$) rectangle ($(r3)+(0.5,0.5)$);



\node[bluev] (a1) at (5.6,0) {$v_1$};
\node[redv]  (a2) at (6.3,0) {$\cdot$};

\node[purplev] (b1) at (7,0) {$v_1$};
\node[bluev]   (b2) at (7.7,0) {$v_2$};

\node[redv] (c1) at (8.4,0) {$\cdot$};

\node[bluev]  (c2) at (9.35,0) {$v_5$};
\node[bluev]  (c3) at (10.05,0) {$v_4$};
\node[redv]  (c4) at (10.75,0) {$\cdot$};

\draw (r1)--(a1);
\draw (r1)--(b2);
\draw [dashed] (r1)--(a2);

\draw (r2)--(b1);

\draw  (r3)--(c2);
\draw  (r3)--(c3);
\draw [dashed] (r5)--(c4);

\draw [dashed](r4)--(c1);

\draw[dottedbox] ($(a1)+(-0.5,-0.5)$) rectangle ($(c1)+(0.5,0.5)$);

\node[small] at (6.8,-0.8) {$\mathcal V^N$ at time $j+1$};
\node[small] at (6.8,-1.2) {by keeping the $N$ leftmost particles};

\node[small] (Zj1) at (6.8,-2) {$Z_{j+1}=\{v_1,v_2\}$};
\node[small] (Zj1) at (6.8,-2.5) {$G_{j+1}=\{v_1, v_2, v_3, v_4, v_5\}$};

%
%
%
\end{tikzpicture}
\caption{\small  Schematic illustration of the coupling between $(Z_j, G_j)$ and $\V^N$.}
\label{f:coupling}
\end{figure}

 Suppose we are at step $j$ and have constructed the graph $G_j$, the set $Z_j\subset G_j$ and the paths in $\mathcal Q_j$.  Each active vertex  $u\in Z_j$ corresponds to a blue particle $u$ of $\V^N$ at time $j$. We randomly color $\#G_j-\#Z_j$ children of $u$ in $\V$ blue, among the $n-1$ leftmost ones, so that each blue child $v$ corresponds to an edge in $\{u\}\times (G_j\setminus Z_j)$.  The remaining children of $u$ in $\V$ are colored {\it red}.  We couple the family $(X(u,v))_{v\in G_j\setminus Z_j}$ with the displacements of the blue particles in $\V$ via the same coupling described above.

 Purple and red particles at time $j$ give birth only to red particles. We then retain the $N$ leftmost particles of the population at generation $j+1$, which form the population of $\V^N$ at generation $j+1$.  It is composed of blue/red particles. If a blue particle $|v|=j+1$ with parent $u$ is selected in $\V^N$, we select the corresponding edge $(u,v)$ in $G_j$.  In the case several selected edges  point to the same vertex $v$ in $G_j \setminus Z_j$, so that multiple blue particles at time $j+1$ correspond to the same vertex $v$, we keep the one with minimal position in $\V$ {\it blue} and color the other particles {\it purple.}  By definition, $V(v):=V(u)+X(u,v)$ is the weight $S_{j+1}(\gamma,v)$ of the path $(\gamma,v)$ in $K_n$ obtained by  adding the edge $e=(u,v)$ to the unique path $\gamma \in {\mathcal Q}_j$ ending at $u$. The blue particles  $v$ in $\V^N$ at time $j+1$ form the set $Z_{j+1}$, and the paths $(\gamma,v)$ the set ${\mathcal Q}_{j+1}$.  Finally, we remove $Z_j$ and all incident edges to obtain the graph $G_{j+1}$.  We note that \eqref{eq:index} also holds for any $v \in Z_{j+1}$ (if it exists). Summing these inequalities, we obtain that \begin{equation}\label{eq:index-2}
V(v) \le \frac{n}{n-N}  \V^N(v)+\frac{N}{n-N}(j+1) , \qquad \forall\, v \in Z_{j+1}.
\end{equation}

\noindent Therefore, we have constructed an exploration process $(Z_j, {\mathcal Q}_j)_{j\ge 0}$, together with its coupling with $\V^N$,  up to the first index $j$ for which $Z_j=\emptyset$.  \hfill {\footnotesize $\Box$}

\medskip
   Let $|u|=j$ be a blue particle in $\V^N$ at time $j$ and $v$ be a child of $u$ in $\V$ among the $n-1$ leftmost children. Conditionally on the whole branching random walk $\V$ and 
    on the exploration process up to time $j$, the probability for $v$ to be colored  red is 
\[ 
1-\frac{ \#G_{j+1}}{n-1}= \frac{n-1-\#G_0+\#Z_0 + \#Z_1+\ldots+\#Z_j }{n-1}\le \frac{n-\#G_0+N(j+1)}{n-1}.
\]  

Now, we estimate the probability that $v$ is colored purple, given it is not red. In particular, $v$ belongs to $\V^N$ at time $j+1$. To avoid confusion, write temporarily $w$ for the vertex in $K_n$ associated to the particle $v$ in $\V^N$. Whenever $v$ is colored purple, 
 there must exist some particle $u'\neq u$  in $\V^N$ at time $j$ and some child $v'$ of $u'$ in $\V^N$ such that 
 $\V^N(v')-\V^N(u')$  is less than $x:=\V^N(v)- \V^N(u')$, and $v'$ is coupled with the vertex $w$. Let  ${\mathcal N}_{u'}$ be the set of children of $u'$ that are not red and  whose displacement (in $\V$) with respect to $u'$ is less than $x$.  By our coupling, we identify ${\mathcal N}_{u'}$ with a set of vertices in $G_{j+1}$. In particular, $w\in {\mathcal N}_{u'}$.
  By symmetry, conditionally on $\V$, on the exploration process up to time $j$, and on the red/non-red particles at time $j+1$,   the probability that $w\in {\mathcal N}_{u'}$ is $\frac{\#{\mathcal N}_{u'}}{\#G_{j+1}}$. 
  Since $\sum_{u'} \#{\mathcal N}_{u'} \le N$, we obtain that the conditional probability that $v$ is colored purple, given it is not colored red, is less than  
 \[
  \frac{N}{\#G_{j+1}}\le \frac{N}{\#G_0-N(j+1)}
  \]
as long as $N(j+1)<\#G_0$.  Since $\p(v \textrm{ is not blue} \mid \V) = \p(u \textrm{ is not blue} \mid \V)+\p(u \textrm{ is blue}, v \textrm{ is not blue} \mid \V)$, we find by induction that
 \begin{equation}\label{proba_red}
       \p( v \textrm{ is not blue} \mid \V) \le \sum_{k=0}^j \Big(\frac{n-\#G_0+N(k+1)}{n-1}  +\frac{N}{\#G_0-N(k+1)}\Big).
  \end{equation}

\subsection{Lower bound in Theorem \ref{t:main}}
The lower bound of Theorem \ref{t:main} is a consequence  of the following lemma.
\begin{lemma}
 Let $\alpha < \frac{\pi^2}{2}$ and $t<\frac{1}{\frac{\pi^2}{2}-\alpha}$. Choose $a\in (0,1)$ close enough to $1$ such that 
\[
- a + \frac{\pi^2}{2 a^2} t < \alpha t .
\]
Apply the above exploration process with $G_0=K_n$ and $N_0=N=\lfloor n^a\rfloor$. With probability tending to $1$ as $n\to\infty$, there is an active vertex  at time $\lfloor t\ln^3 n\rfloor $ such that $V(u)\le \alpha t\ln n$.  
\end{lemma}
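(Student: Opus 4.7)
I would apply Proposition~\ref{p:first_moment} (with its parameter $a$ equal to $1$, so that the $\lfloor N^a\rfloor$ initial particles there coincide with the $N=\lfloor n^a\rfloor$ initial particles here) to the selected branching random walk $\V^N$, and then transport the bound on $\min\V^N$ back to the exploration process through the coupling of Section~\ref{s:exploration}.

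Set $\ell_n=\lfloor t\ln^3 n\rfloor$. Since $\ln N=a\ln n+O(1)$, one has $\ell_n/\ln^3 N\to t/a^3$. Proposition~\ref{p:first_moment} then yields
\[
\frac{1}{\ln N}\min_{|u|=\ell_n}\V^N(u)\,\cvp\,-1+\frac{\pi^2}{2}\cdot\frac{t}{a^3},
\]
equivalently $\min_{|u|=\ell_n}\V^N(u)=\big(-a+\frac{\pi^2 t}{2a^2}\big)\ln n+o_p(\ln n)$. By the standing hypothesis on $a$, the leading coefficient is strictly less than $\alpha t$, so there exists $\delta>0$ such that with probability tending to $1$ the leftmost particle $u^*$ of $\V^N$ at time $\ell_n$ satisfies $\V^N(u^*)\le(\alpha t-\delta)\ln n$.

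Next I would show that $u^*$ is blue with high probability. Since $G_0=K_n$ gives $\#G_0=n$, the deterministic bound \eqref{proba_red} reads, uniformly in the particle $v$ at generation $\ell_n$,
\[
\p(v\text{ not blue}\mid\V)\le\sum_{k=0}^{\ell_n}\Big(\frac{N(k+1)}{n-1}+\frac{N}{n-N(k+1)}\Big)=O\Big(\frac{N\ell_n^2}{n}\Big)=O(n^{a-1}\ln^6 n)=o(1).
\]
Because $u^*$ is $\V$-measurable, the same bound applies after substituting $v=u^*$; integrating out $\V$ yields $\p(u^*\text{ not blue})=o(1)$, i.e.\ $u^*\in Z_{\ell_n}$ with high probability.

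On the intersection of the two high-probability events just produced, the coupling bound \eqref{eq:index-2} gives
\[
V(u^*)\le\frac{n}{n-N}\V^N(u^*)+\frac{N(\ell_n+1)}{n-N}=\V^N(u^*)+O(n^{a-1}\ln^3 n)=\V^N(u^*)+o(\ln n),
\]
so $V(u^*)\le(\alpha t-\delta)\ln n+o(\ln n)<\alpha t\ln n$ for all $n$ large enough. No single step is technically difficult; the proof merely checks that the selection scale $N=\lfloor n^a\rfloor$ is at once small enough (via $a<1$) for the coloring corrections in \eqref{proba_red} and the coupling distortion in \eqref{eq:index-2} to be negligible when $\ell_n=O(\ln^3 n)$, and large enough (via $a$ close to $1$) for the velocity shift $-a+\frac{\pi^2 t}{2a^2}$ coming from Proposition~\ref{p:first_moment} to remain strictly below $\alpha t$.
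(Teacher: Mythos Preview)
Your proof is correct and follows essentially the same approach as the paper's: apply Proposition~\ref{p:first_moment} with its parameter equal to $1$ to locate the leftmost particle of $\V^N$ at generation $\ell_n$ near $(-a+\frac{\pi^2}{2a^2}t)\ln n$, use \eqref{proba_red} to show this particle is blue with high probability, and then transfer via \eqref{eq:index-2}. The only cosmetic difference is that the paper introduces an intermediate constant $b$ with $-a+\frac{\pi^2}{2a^2}t<bt<\alpha t$ in place of your $\alpha t-\delta$, and states the negligibility of the terms in \eqref{proba_red} and \eqref{eq:index-2} without writing out the $O(n^{a-1}\ln^6 n)$ and $O(n^{a-1}\ln^3 n)$ estimates you provide.
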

 \begin{proof}
   For simplicity, suppose that $N=n^a$ and that $L:=t\ln^3 n= \frac{t}{a^3} \ln^3 N$ is an integer.  Note that $(-1+ \frac{\pi^2}2 \frac{t}{a^3}) \ln N= (-a + \frac{\pi^2}{2 a^2} t) \ln n $. Fix   $b\in (0,\alpha)$ such that $- a + \frac{\pi^2}{2 a^2} t<  b t$. By Proposition \ref{p:first_moment} (with $a=1$ there), there is  a particle $|u|=L$ in $\V^N$ with $\V^N(u)\le b t \ln n$ with probability going to $1$ as $n\to\infty$. By \eqref{proba_red}, the probability for $u$ to be blue also tends to $1$ as $n\to\infty$.  It remains to prove that $V(u)\le \alpha t \ln n$. By \eqref{eq:index-2},  $V(u)\le (bt \ln n)  \frac{n}{n-N} + \frac{N}{n-N} L \le \alpha t \ln n$ for $n$ large enough indeed. 
\end{proof}

\subsection{Lower bound in Theorem \ref{t:main2}}

Let $\tau_0=0$ and $\Delta> 1$ be an integer. We apply the exploration mechanism of  Section \ref{s:exploration} with $G_0=K_n$ and $N_0=1$. We start with a single active vertex, say $u^*_0$, at time $0$.  At each time $j\ge 0$ such that $Z_j\neq \emptyset$, we write $u^*_j$ for the active vertex  in $Z_j$ with the smallest position in $\V^N$, and put in store the edge $(u^*_j,v^*_j)$  corresponding to the smallest label among the edges of $G_j\setminus Z_j$ incident to $u^*_j$. Note that $v^*_j$ does not necessarily belong to $Z_{j+1}$. We set $\eta_{j+1}:=X(u^*_j,v^*_j)$.  We continue the exploration until time $\Delta$ or until $Z$ becomes empty:
$$ \tau_1:= \Delta\wedge \min\{i\ge 1: Z_i=\emptyset\}   .$$

Let $\Gamma_1\in {\mathcal Q}_{\tau_1-1}$  be the path of the exploration ending at $u^*_{\tau_1-1}$. We extend it by adding the edge $(u^*_{\tau_1-1},v^*_{\tau_1-1})$. Next, remove $Z_{\tau_1-1}$ and all its incident edges from $G_{\tau_1-1}$ to obtain $G_{\tau_1}$. We consider $v^*_{\tau_1-1}$ as the unique active vertex at time $\tau_1$.

 We then restart the same exploration process on the graph $G_{\tau_1}$, instead of $G_0$,  with initial active vertex $v^*_{\tau_1-1}$. Proceeding in the same way, we define successive times $\tau_1<\tau_2<\ldots$ and the corresponding path $(\Gamma_1,v^*_{\tau_1-1},\Gamma_2,v^*_{\tau_2-1},\ldots)$ until the first index $i$ such that $G_{\tau_i}=\emptyset$. Since at most $N$ vertices are removed at each step, such an index $i$ must be larger than $\frac{n}{\Delta N}$. See Figure \ref{fig:block-exploration}. Finally,  for each active vertex $u$, we let $V(u)$ be the weight of the unique path from the exploration ending at $u$. We note that $(\eta_j)$  will be useful for controlling $V(u^*_j)$; see \eqref{control-Vuj} below.

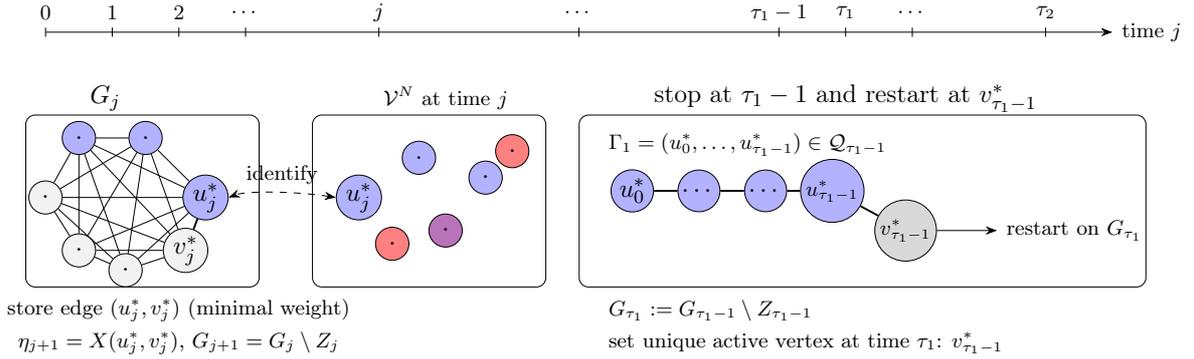
\begin{figure}[ht]
\centering
\resizebox{0.98\textwidth}{!}{%
\begin{tikzpicture}[
    >=Stealth,
    every node/.style={font=\small},
    small/.style={font=\scriptsize},
    vertex/.style={circle,draw,inner sep=1pt,minimum size=5mm},
    active/.style={vertex,fill=blue!30},
    store/.style={vertex,fill=black!15},
    inactive/.style={vertex,fill=gray!10},
     redv/.style={vertex,fill=red!50},
    purplev/.style={vertex,fill=violet!55},
    edge/.style={very thin},
    sel/.style={thick},
    dashedsel/.style={thick,dashed},
    box/.style={rounded corners,draw,very thin}
]


\draw[->] (0,3.7) -- (16,3.7) node[right,small] {time $j$};

\foreach \x/\lab in {0/0,1/1,2/2,3/{\cdots},5/{j}, 8/{\cdots}, 11/{\tau_1-1}, 12/{\tau_1},13/{\cdots}, 15/{\tau_2}}{
  \draw (\x,3.63) -- (\x,3.77);
  \node[small] at (\x,4) {$\lab$};
}
 
 \node at (0.9,2.7) { $\scriptsize G_j$};

\node[box,minimum width=3.5cm,minimum height=2.6cm,anchor=north west] (Gjbox) at (-0.3,2.45) {};

\node[active] (ug) at (2.4,1.2) {$u^*_j$};

\node[active]   (t1) at (0.5,2.1) {$\cdot$};
\node[active]   (t2) at (1.5,2.10) {$\cdot$};

\node[inactive] (b1) at (0,1.2) {$\cdot$};   
\node[inactive] (b2) at (0.5,0.40) {$\cdot$};
\node[inactive] (b3) at (1.2,0.10) {$\cdot$};
\node[inactive] (b4) at (2.1,0.4) {$v^*_j$};

\def\GjVertices{ug,t1,t2,b1,b2,b3,b4}

\foreach \i [count=\ni] in \GjVertices {
  \foreach \j [count=\nj] in \GjVertices {
    \ifnum\nj>\ni
      \draw[edge] (\i) -- (\j);
    \fi
  }
}

\draw[sel] (ug) -- (b4);
\node[small] at (2,-0.5) {store edge $(u^*_j,v^*_j)$ (minimal weight)};
\node[small] at (2,-1) {$\eta_{j+1}=X(u^*_j,v^*_j)$, $G_{j+1}=G_j\setminus Z_j$};

\node at (6.0,2.7) {\scriptsize $\mathcal V^N$ at time $j$};

\node[box,minimum width=3.5cm,minimum height=2.6cm,anchor=north west] (Vjbox) at (4.0,2.45) {};

\node[active] (uV) at (4.7,1.2) {$u^*_j$};
\node[active] (uV2) at (5.6,1.8) {$\cdot$};
\node[active] (uV3) at (6.6,1.5) {$\cdot$};
\node[purplev] (uV4) at (6.0,0.7) {$\cdot$};
\node[redv] (uV5) at (7,1.9) {$\cdot$};
\node[redv] (uV6) at (5.2,0.5) {$\cdot$};


\draw[<->,dashed] (uV.west) .. controls (3.7,1.3) and (3.0,1.3) .. (ug.east);
\node[small] at (3.55,1.55) {\scriptsize identify};

\node at (12,2.7) {stop at $\tau_1-1$ and restart at $v^*_{\tau_1-1}$};

\node[box,minimum width=8.5cm,minimum height=2.6cm,anchor=north west] (restartbox) at (8,2.45) {};

\node[small,anchor=west] at (8.3,2.00)
{$\Gamma_1=(u^*_0, \dots,u^*_{\tau_1-1})\in\mathcal Q_{\tau_1-1}$};

\node[active] (g0) at (8.8,1.3) {$u^*_0$};
\node[active] (g1) at (9.8,1.3) {$\cdots$};
\node[active] (g2) at (10.8,1.3) {$\cdots$};
\node[active] (g3) at (11.8,1.3) {\scriptsize $u^*_{\tau_1-1}$};

\draw[sel] (g0)--(g1);
\draw[sel] (g1)--(g2);
\draw[sel] (g2)--(g3);

\node[store] (lastv) at (12.9,0.7) {\scriptsize $v^*_{\tau_1-1}$};
\draw[sel] (g3) -- (lastv);

\node[small,anchor=west] at (8.3,-0.5)
{ $G_{\tau_1}:=G_{\tau_1-1}\setminus Z_{\tau_1-1}$};

\node[small,anchor=west] at (8.3,-1)
{set unique active vertex at time $\tau_1$: $v^*_{\tau_1-1}$};

\draw[->] (lastv.east) -- ++(0.9,0) node[right,small] {restart on $G_{\tau_1}$};

\end{tikzpicture}%
}
\caption{\small  Schematic illustration of the exploration process in the proof of the lower bound of Theorem \ref{t:main2}. The path $\Gamma_1$ is the unique path in the exploring process from $u^*_0$ to $u^*_{\tau_1-1}$. Note that $u^*_j$, for $1\le j < \tau_1-1$, do not necessarily lie on $\Gamma_1$. When a particle $u$ is selected in $\V^N$, we write interchangeably $\V(u)$ or $\V^N(u)$.  } 
\label{fig:block-exploration}
\end{figure}

 The lower bound of Theorem \ref{t:main2} is a consequence  of the following lemma.   Observe that if $\beta_n\in (0,\varepsilon)$ and $\liminf_{n\to\infty} \frac{\sqrt{2\beta_n}}{\pi} \ln n > 1$, then $\liminf_{n\to\infty} \frac{\sqrt{2\widetilde{\beta}_n}}{\pi}\ln n > 1 + \limsup_{n\to\infty} \widetilde{\beta}_n^{1/4} $ where $\widetilde{\beta}_n$ is $\max( \beta_n, (1+\widetilde{\varepsilon})^{2} \frac{\pi^2}{2 \ln^2 n}  )$ or $\min( \beta_n, (1+\widetilde{\varepsilon})^{2} \frac{\pi^2}{2 \ln^2 n}  )$ with any choice $\widetilde{\varepsilon}>\varepsilon^{1/4}$. This observation implies that,    in the assumptions on $\beta_n$ in Theorem \ref{t:main2},  we may additionally require, at no cost,  that $\liminf_{n\to\infty} \frac{\sqrt{2{\beta}_n}}{\pi}\ln n > 1 + \limsup_{n\to\infty} {\beta}_n^{1/4} $.

\begin{lemma} \label{lem:low-T2}
 For each $\varepsilon$, let  $\beta_n$ be a sequence  with $\beta_n \in (0,\varepsilon)$ for all $n$ and such that $\liminf_{n\to\infty} \frac{\sqrt{2{\beta}_n}}{\pi}\ln n > 1 + \limsup_{n\to\infty} {\beta}_n^{1/4} $. Let $b>a>1$  such that $\liminf_{n\to\infty} \frac{\sqrt{2\beta_n}}{\pi} \ln n > b$ and $b>a+\limsup_{n\to\infty} \beta_n^{1/4}$. 
 Set $N=\lfloor e^{\frac{a\pi}{\sqrt{2\beta_n}}}\rfloor $ and $L=ne^{-\frac{b\pi}{\sqrt{2\beta_n}}}$. In the exploration described above, 
 choose $\Delta=\lfloor  \ln^3 N\rfloor $. With probability tending to $1$ as $n\to\infty$ then $\varepsilon\to 0$, there exists an active vertex  $u$  at a time greater than $L$ such that $V(u)\le \beta_n L$. 
\end{lemma}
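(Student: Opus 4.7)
The plan is to chain $K := \lceil L/\Delta \rceil + 1$ consecutive blocks of the exploration of Section~\ref{s:exploration}, each a fresh restart from a single active vertex, and to show that the active vertex produced at time $K\Delta > L$ has weight at most $\beta_n L$ with probability tending to one. The engine per block is Proposition~\ref{p:first_moment} applied with $a=0$ and $t=1$: starting $\V^N$ from a single particle and running it $\Delta = \lfloor \ln^3 N \rfloor$ generations, the leftmost position $M_\Delta$ satisfies $M_\Delta/\ln N \to \pi^2/2$ in $L^2$. Summed across blocks this produces a total weight
\[
\approx K \cdot \tfrac{\pi^2}{2}\ln N \;=\; \frac{L\pi^2}{2\Delta}\ln N \;\sim\; \frac{L\pi^2}{2\ln^2 N} \;\le\; \frac{\beta_n L}{a^2} \;<\; \beta_n L,
\]
where the last two inequalities use $\ln N \ge a\pi/\sqrt{2\beta_n}$ and $a > 1$.

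\textbf{Per-block analysis.}
I would first check the quantitative constraints on $(n,N,L,\Delta)$. The assumptions $b > a + \limsup \beta_n^{1/4}$ and $\liminf \sqrt{2\beta_n}/\pi\cdot\ln n > b$ give $NL = n e^{-(b-a)\pi/\sqrt{2\beta_n}} = o(n)$ (graph not exhausted), $K \to \infty$, and $N\Delta/n,\ \Delta\cdot NL/n \to 0$; the latter two, via \eqref{proba_red}, force the expected fraction of non-blue particles in any block to be $o(1)$. Now fix one block starting at an active vertex $u^*$ with $V(u^*) = v_0$. Proposition~\ref{p:first_moment} yields $\e[(M_\Delta/\ln N - \pi^2/2)^2] \to 0$, hence $\e[M_\Delta]\le (\pi^2/2 + o(1))\ln N$ and $\mathrm{Var}(M_\Delta) = o(\ln^2 N)$. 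Applying Markov to \eqref{proba_red}, with high probability at least a fraction $1-o(1)$ of the $N$ particles at time $\Delta$ are blue, so there is a blue minimizer $u^*_\Delta$ with $\V^N(u^*_\Delta) \le M_\Delta + o(\ln N)$. The coupling bound \eqref{eq:index-2} then gives
\[
V(u^*_\Delta) - v_0 \;\le\; \tfrac{n}{n-N}\V^N(u^*_\Delta) + \tfrac{N\Delta}{n-N} \;\le\; M_\Delta + o(\ln N),
\]
and the exit edge $\eta$ (the minimum of $\Theta(n)$ i.i.d.\ copies of $X$) contributes only $O(1)$, which is negligible in the $\varepsilon \to 0$ limit.

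\textbf{Aggregation and concentration.}
The crucial structural fact allowing us to chain $K$ blocks is that the exploration removes each used vertex, so the edge-weights of distinct blocks are drawn from disjoint families of i.i.d.\ copies of $X$. Hence, conditional on the starting vertices, the block increments $V_i := V(u^*_{\Delta,i}) - V(\mathrm{start}_i)$ are i.i.d.\ copies of a common random variable $W$, up to the $o(n)$ shrinkage of $G$, which perturbs the distribution by $o(1)$. By the per-block analysis, $\e[W] \le (\pi^2/2 + \delta)\ln N$ and $\mathrm{Var}(W) = o(\ln^2 N)$ for any fixed $\delta > 0$. Chebyshev's inequality applied to $\sum_{i=1}^K V_i$ yields
\[
\p\Bigl(\sum_{i=1}^K V_i > (\tfrac{\pi^2}{2} + 2\delta) K \ln N\Bigr) \;\le\; \frac{K \cdot o(\ln^2 N)}{\delta^2 K^2 \ln^2 N} \;=\; \frac{o(1)}{\delta^2 K}\;\to\; 0.
\]
Adding the $K$ exit edges (total $O(K) = O(L/\Delta) = o(\beta_n L)$ as $\varepsilon \to 0$, because $\beta_n\Delta \ge (a\pi)^3/(2^{3/2}\sqrt{\varepsilon}) \to \infty$), the total weight of the produced vertex is at most $(1 + O(\delta))\beta_n L / a^2 < \beta_n L$ once $\delta$ is chosen small in terms of $a - 1 > 0$; this vertex sits at time $K\Delta > L$, as required.

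\textbf{Main obstacle.}
The principal difficulty is concentration across a very large number of blocks: $K$ can be a positive power of $n$, so a naive union bound based on a per-block failure probability of $o(1)$ does not close. The remedy is to exploit the \emph{genuine} conditional independence of the blocks, guaranteed by the exploration's vertex-removal mechanism, combined with the variance estimate from the $L^2$ convergence in Proposition~\ref{p:first_moment}; Chebyshev then gives concentration of $\sum V_i$ at rate $1/\sqrt{K}$. A related subtlety is that the graph shrinks by up to $NL = o(n)$ vertices over the whole run, so one must verify that this perturbs neither the per-block distribution nor the bound \eqref{proba_red} by more than an $o(1)$ amount, which is the content of the quantitative inequalities $N\Delta/n,\ \Delta\cdot NL/n \to 0$ from the parameter check.
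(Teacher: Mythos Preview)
Your architecture is the paper's: chop $[0,L]$ into $\sim L/\Delta$ blocks, restart each block from a single active vertex, invoke Proposition~\ref{p:first_moment} with $a=0$, $t=1$ per block, and concentrate the sum of block increments via Chebyshev. But there is a real gap in the per-block step. The implication ``a $1-o(1)$ fraction of the particles are blue, hence some blue particle lies within $o(\ln N)$ of $M_\Delta$'' is false as stated: the $o(N)$ non-blue particles could be precisely the leftmost ones, and \eqref{proba_red} says nothing about the gap between the blue minimum and the overall minimum of $\V^N$. What \eqref{proba_red} does give is that the \emph{specific} leftmost particle is blue with conditional probability $1-O(NL\Delta/n)$; call this event $\mathcal E_i$. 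Since $K$ can be a positive power of $n$ while $NL\Delta/n$ is only $o(1)$ (not $o(1/K)$; for instance when $\beta_n\equiv\varepsilon$, the expected number of bad blocks $K\cdot NL\Delta/n\sim NL^2/n$ is polynomial in $n$), many blocks will fail, and on $\mathcal E_i^c$ you have no control of $V_i$ through $\V^N$ at all. Consequently your moment bounds $\e[W]\le(\pi^2/2+\delta)\ln N$ and $\mathrm{Var}(W)=o(\ln^2 N)$, imported directly from $M_\Delta$, are unjustified for the actual block increment $W=V_i$.

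The paper fills this gap with an ingredient you do not invoke: the stored-edge bound \eqref{control-Vuj}, which shows that \emph{unconditionally} $V(u^*_j)\le \tfrac{n}{n-N}\sum_{i\le j}\eta_i+O(1)$, so that ${\tt s}_i$ is dominated by a sum of $\Delta$ exponentials of mean $O(1)$ and hence $\e[{\tt s}_i^2]=O(\Delta^2)$. The argument then splits. On $\mathcal E_i$ one has ${\tt s}_i\le\Theta_i+o(1)$ with $(\Theta_i)$ \emph{genuinely} i.i.d.\ copies of $\min_{|u|=\Delta-1}\V^N(u)$, and $\sum\Theta_i$ is concentrated by Chebyshev (Corollary~\ref{coro:concentration}), exactly as you propose. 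On $\mathcal E_i^c$, Cauchy--Schwarz gives $\e[({\tt s}_i+\Delta)\mathbf 1_{\mathcal E_i^c}]\le O(\Delta)\,\p(\mathcal E_i^c)^{1/2}$, which by \eqref{eq:NLDelta} sums over all $\sim L/\Delta$ blocks to $o(\beta_nL)$, so Markov disposes of the bad-block contribution in one shot. A smaller omission of the same flavour: a block can terminate before time $\Delta$ (when $Z_j=\emptyset$), so after $K$ blocks the elapsed time need not be $K\Delta$; the paper shows the random block count $i_L$ lies in $[L/\Delta,a_1L/\Delta]$ w.h.p., again via the bound on $\p(\mathcal E_i^c)$.
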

\begin{proof}
   For simplicity, we treat $\ln^3 N$, $L$ and $e^{\frac{a\pi}{\sqrt{2\beta_n}}}$  as  integers. Observe that 
   \begin{equation}\label{limit_N}
    \lim_{n\to\infty} \frac{N}{n}=\lim_{n\to\infty} \frac{N\Delta}{n\beta_n}=\lim_{n\to\infty} \frac{\Delta}{L}=0.
   \end{equation}
   Moreover $\Delta\to\infty$, $  \frac{ N L}{n} \to 0$ when $n\to\infty$ and then $\varepsilon\to 0$.  Actually,  $NL \sim n e^{-\frac{b-a}{\sqrt{2\beta_n}}} $ and $\Delta \sim  (\frac{a\pi }{\sqrt{2\beta_n}})^3$. 
Hence for any $c>0$,
\begin{equation}\label{eq:NLDelta}
\lim_{\varepsilon\downarrow 0} \limsup_{n\to\infty}  \frac{1}{n}NL\Delta^{c}=0.
\end{equation}
 We let 
\[
i_L:=\inf\{i\colon \tau_i>  L\}.
\]

\noindent Recall that $\tau_i$ is well-defined at least for $i \le \frac{n}{\Delta N}$. 
We will prove the lemma with $u=u^*_{\tau_{i_L}-1}$.  Since $L < \frac{n}{\Delta N}$ and  $\tau_i- \tau_{i-1} \ge 1$,  $i_L\le L+1$. As a matter of fact, we will show  that for any $a_1 \in (1, a)$, there is some $\varepsilon_0>0$ small enough such that for all $\varepsilon \in (0, \varepsilon_0)$,  \begin{equation}\label{bound_iL}
\lim_{n\to\infty}\p\Big(i_L \in \Big[\frac{L}{\Delta}, \frac{a_1 L}{\Delta}\Big]\Big)=1.
\end{equation}

Let $\gamma$ be the path 
\[
\gamma:=(\Gamma_1,v^*_{\tau_1-1},\Gamma_2,\ldots,v^*_{\tau_{i_L-1}-1},\Gamma_{i_L}).
\]
Note that 
\begin{equation}\label{eq:Sgamma}
V(u^*_{\tau_{i_L}-1})=S_{|\gamma|}(\gamma)=\sum_{i=1}^{i_L} {\tt s}_i  +  \sum_{i=1}^{i_L-1} \eta_{\tau_i},
\end{equation}

\noindent where ${\tt s}_i:= S_{|\Gamma_i|}(\Gamma_i) = V(u^*_{\tau_i-1})- V(v^*_{\tau_{i-1}-1})$ is the total weight of the path $\Gamma_i$ in the sense of \eqref{Sigamma}, and we recall that $\eta_{\tau_i}= X(u^*_{\tau_i-1}, v^*_{\tau_i-1})$. The main contribution to  $V(u^*_{\tau_{i_L}-1})$ comes from $\sum_{i=1}^{i_L} {\tt s}_i {\bf 1}_{{\mathcal E}_i}$, where ${\mathcal E}_i$ denotes the event that, when  running the exploration process on the complete graph $G_{\tau_{i-1}}$, the leftmost particle of the associated $N$-branching random walk $\V^N$ at time $\Delta-1$  is {\it blue}. Indeed,  we will show that for any $\delta>0$, 
\begin{align} & \p\Big( \sum_{i=1}^{i_L-1} \eta_{\tau_i} > \delta \beta_n L\Big) = o_{\varepsilon,n}(1), \label{sum-eta-tau}\\
  & \p\Big( \sum_{i=1}^{i_L}   ({\tt s}_i+\Delta)   {\bf 1}_{{\mathcal E}_i^c}> \delta \beta_n L\Big) = o_{\varepsilon,n}(1), \label{sum-S-tau<Delta}  \end{align}

\noindent where $o_{\varepsilon, n}(1)$ denotes some quantity which tends to $0$ as $n\to\infty$ then $\varepsilon\to0$, and the term $\Delta$ in \eqref{sum-S-tau<Delta}  comes from the forthcoming \eqref{onEi-3}.  

We postpone, for the moment,  the proofs of \eqref{bound_iL}, \eqref{sum-eta-tau} and \eqref{sum-S-tau<Delta}, and turn to   the main term  $\sum_{i=1}^{i_L} {\tt s}_i {\bf 1}_{{\mathcal E}_i}$.

 On ${\mathcal E}_i$, $\tau_i-\tau_{i-1}=\Delta$.  By \eqref{eq:index-2}, we can find $(\Theta_i)_{i\ge 1}$, a sequence of independent copies of $\Theta:=\min_{|u|=\Delta-1} \V^N(u)$, such that for any $i$,    \begin{equation} {\tt s}_i    \le \frac{n}{n- N} \Theta_i + \frac{N}{n- N} \Delta
= \Theta_i+  \frac{N}{n- N}  (\Delta+ \Theta_i), \qquad \mbox{on ${\mathcal E}_i$}.  \label{onEi}\end{equation}

We  mention that $\min_{|u|=\Delta-1} \V^N(u) \ge -\Delta$, hence $\Theta_i \ge -\Delta$. This in view of \eqref{onEi} implies that for any $i $,   \begin{align} {\tt s}_i  {\bf 1}_{{\mathcal E}_i}  & \le \Theta_i+  \frac{N}{n- N}  (\Delta+  \Theta_i) - \Theta_i {\bf 1}_{{\mathcal E}^c_i} 
\nonumber\\
&\le
\Theta_i+  \frac{N}{n- N}  (\Delta+ \Theta_i) +\Delta {\bf 1}_{{\mathcal E}^c_i} .   \nonumber\end{align} 

\noindent  Therefore $$ \sum_{i=1}^{i_L} {\tt s}_i  {\bf 1}_{{\mathcal E}_i}  
\le \frac{n}{n- N}  \sum_{i=1}^{i_L}\Theta_i + \frac{N \Delta }{n- N}   i_L  +
\sum_{i=1}^{i_L}\Delta {\bf 1}_{{\mathcal E}^c_i} .
$$
Since $i_L\le L+1$, we finally obtain that  \begin{equation}  \sum_{i=1}^{i_L} {\tt s}_i   
\le \frac{n}{n- N}  \sum_{i=1}^{i_L}\Theta_i + \frac{N \Delta (L+1)}{n- N}    +
\sum_{i=1}^{i_L}({\tt s}_i+\Delta) {\bf 1}_{{\mathcal E}^c_i} .
\label{onEi-3}\end{equation}

Recall that $a_1>1$. By Proposition \ref{p:first_moment} (with $a=0$ there), for all large $N$, $$ \e (\Theta_1) \le a_1 \frac{\pi^2}{2} \ln N.$$ 

Choose $a_2\in (a_1, a)$. 
In view of \eqref{bound_iL},   we deduce from Corollary \ref{coro:concentration} that  with probability at least $1- o_{\varepsilon, n}(1)$, $$\sum_{i=1}^{i_L}\Theta_i \le \frac{a_2L}{\Delta} \times a_1 \frac{\pi^2}{2} \ln N \sim \frac{a_1 a_2}{a^2} \beta_n L.$$

\noindent By \eqref{limit_N}, $\frac{n}{n- N}  \to 1$  and 
 for any $\delta>0$, $\frac{N \Delta (L+1)}{n- N}< \delta \beta_n L$ for all $n$ large enough.  Choose $\delta \in (0, (1-\frac{a_1 a_2}{a^2})/4)$.  We deduce from  \eqref{sum-eta-tau} and \eqref{sum-S-tau<Delta} that $$\p\Big(V(u^*_{\tau_{i_L}-1})> \beta_n L\Big) = o_{\varepsilon, n}(1),$$ proving the lemma. 

\medskip
It remains to prove \eqref{bound_iL}, \eqref{sum-eta-tau} and \eqref{sum-S-tau<Delta}.

We start with the proof of  \eqref{bound_iL}. Recall that  ${\mathcal E}_1$ is the event that the leftmost particle of $\V^N$ at time $\Delta-1$ is  blue. For any  complete graph $G$, we denote by $\p_{G}$ the probability relating to the exploration of $G$ starting with one active vertex.   By \eqref{proba_red} with $G_{\tau_i}$ in place of $G_0$, for all $i\ge 0$ such that $\#G_{\tau_i}> N \Delta$,
\[
\p_{G_{\tau_i}}(\mathcal E_1^c) \le \Big(\frac{n-\#G_{\tau_i}+N\Delta}{n-1}+\frac{N}{\#G_{\tau_i}-N\Delta}\Big)\Delta .
\]

For all $n$ large enough, we have   $\Delta < L$.  Moreover, observe that at each step of the exploration,  we delete  at most $N$ vertices from the graph. Thus we have  that for any $1\le j \le \tau_{i_L-1}$, 
\begin{equation}\label{card_G}
\#G_{j} =n- (\#Z_0+\ldots+\#Z_{j-1}) \ge n- Nj\ge n- NL .
\end{equation}

\noindent  We get that, for all $i< i_L$,   \begin{equation}\label{bound_eventE}
\p_{G_{\tau_i}} (\mathcal E_1^c) \le  (\frac{2L}{n-1} +\frac{1}{n-2NL}) N\Delta.
\end{equation}
If $\tau_1<\Delta$, then $Z_{\Delta-1}=\emptyset$, hence $\mathcal E_1^c$ occurs. Therefore
\begin{equation}\label{bound_tau1}
\p_{G_{\tau_i}} (\tau_1< \Delta) \le (\frac{2L}{n-1} +\frac{1}{n-2NL}) N\Delta .
\end{equation}

Let $p \in (\frac1{a_1}, 1)$.  Recall the asymptotics given in \eqref{eq:NLDelta} and the paragraph preceding it. We deduce that if $\varepsilon$ is chosen small enough, for all $n$ large enough and all $i<i_L$,
\[
\p_{G_{\tau_i}} (\tau_1=  \Delta) \ge  p.
\]

\noindent 
Note that conditionally on $\sigma\{G_j, j\le \tau_i\}$, $\tau_{i+1}-\tau_i$ is distributed as $\tau_1$ under $\p_{G_{\tau_i}} $. Then for any $j\ge 1$, $\#\{i \in \lb 0,  j-1\rb: \tau_{i+1}-\tau_i=\Delta\}$ is stochastically larger than the sum of $j$  independent Bernoulli r.v. with parameter $p$.  By the weak law of large numbers, with probability larger than $1- o(1)$ with some term $o(1) \to 0$ as $n\to\infty$, there are more than $\frac{L}{\Delta}$ indices $i\le \frac{a_1 L}{\Delta}$ such that $\tau_{i+1}= \tau_i+\Delta$. On this event,  $i_L\le \frac{a_1 L}{\Delta}$.   This implies the upper bound in \eqref{bound_iL}. The lower bound $i_L\ge \frac{L}{\Delta}$ is trivial as $\tau_i- \tau_{i-1} \le \Delta$. 

\medskip
We turn to the proof of \eqref{sum-eta-tau}. Observe that conditionally on the exploration up to time $j$,  the variable $\eta_j$ is the minimum of $\#G_{j+1}$ independent r.v. distributed as $X$. As long as $\#G_{j+1}\ge \frac{n}2$, $\eta_j$ is stochastically smaller than an exponential random variable with mean $2e$. We can assume that the number of vertices in $G_{\tau_{i}}$ is greater than $\frac{n}{2} $ for any $i<i_L$ and that $i_L\le \frac{a_1 L}{\Delta}$  by \eqref{card_G} and \eqref{bound_iL}. Note that on $\{i_L\le \frac{a_1 L}{\Delta}\}$, $\sum_{i=1}^{i_L-1} \eta_{\tau_i}$ is bounded above by the sum of the $\lfloor \frac{a_1 L}{\Delta}\rfloor$ largest terms among $\eta_0, ..., \eta_{L-1}$. Applying Lemma \ref{l:upper_stoc} to $k=\lfloor \frac{a_1 L}{\Delta}\rfloor$ and $\ell=L$ gives that (recalling that $a_2 > a_1$) $$\e\Big[\Big(\sum_{i=1}^{i_L-1} \eta_{\tau_i} \Big) {\bf 1}_{\{i_L\le \frac{a_1 L}{\Delta}\}}\Big] \le \frac{2 e a_2 L}{\Delta} \ln \Delta.$$

Since $\frac{ \ln \Delta}{\Delta} =o(\beta_n)$,  the Markov inequality and \eqref{bound_iL} yield \eqref{sum-eta-tau}.

\medskip
Finally,  we prove \eqref{sum-S-tau<Delta}.  By \eqref{bound_iL} and the positivity of ${\tt s}_i+\Delta$, it is enough to show that \begin{equation}    \p\Big( \sum_{i=1}^{a_1L/\Delta}   ({\tt s}_i+\Delta)   {\bf 1}_{{\mathcal E}_i^c}> \delta \beta_n L\Big) = o_{\varepsilon,n}(1). \label{sum-S-tau<Delta-2} \end{equation}

 Note that \begin{equation}    \label{cond-Gi}
\e[({\tt s}_i+\Delta)   {\bf 1}_{{\mathcal E}_i^c}]= \e\Big[  \e_{G_{\tau_{i-1}}}[ ({\tt s}_1+\Delta) {\bf 1}_{{\mathcal E^c_1}}]\Big].
\end{equation}

\noindent  We have ${\tt s}_1=V(u^*_{\tau_{1}-1})$. On the event $\{\tau_1>j\}$,    $\V(u^*_{j})=\V^N(u^*_j)$ being the minimum of $\V^N$ at time $j$ over active vertices, we must have $\V(u^*_{j}) \le \V(v^*_{j-1})$. In the coupling \eqref{coupling}, $\V(v^*_{j-1})-\V(u^*_{j-1}) \le X(u^*_{j-1},v^*_{j-1})=\eta_{j}$. Hence $\V^N(u^*_{j})- \V^N(u^*_{j-1}) \le \eta_{j}$ so we get $\V^N(u^*_{j})\le \sum_{i=1}^{j} \eta_i$, and by \eqref{eq:index-2},  \begin{equation}    V(u^*_{j}) \le \frac{n}{n-N} \sum_{i=1}^{j} \eta_i + \frac{Nj}{n-N}, \qquad \forall\, j< \tau_1.\label{control-Vuj} \end{equation}  

\noindent Hence $${\tt s}_1 \le \frac{n}{n-N} \sum_{i=1}^{\Delta-1} \max(\eta_i,0) + \frac{N (\Delta-1) }{n-N}.$$

 The term $\sum_{i=1}^{\Delta-1} \max(\eta_i,0)$ is stochastically smaller than the sum of $\Delta-1$ independent exponential r.v. with mean $2e$, as long as $G_{\Delta}$ has more than $\frac{n}{2}$ vertices. This condition is indeed satisfied thanks to \eqref{card_G}. Then for some positive constant $c$,  $$  \e_{G_{\tau_{i-1}}}[({\tt s}_1+\Delta)^2] \le c \Delta^2.$$ 

By Cauchy--Schwarz inequality and  \eqref{bound_eventE}, we obtain that $$  \e_{G_{\tau_{i-1}}}[ ({\tt s}_1+\Delta) {\bf 1}_{{\mathcal E^c_1}}]
\le
c^{1/2} \Delta \Big( (\frac{2L}{n-1} +\frac{1}{n-2NL})N\Delta\Big)^{1/2}
\le \Delta^{-4}, 
$$

\noindent where the last inequality holds for all small $\varepsilon$ and large $n$ (see \eqref{eq:NLDelta}). It follows from \eqref{cond-Gi} that $$ \e \Big[\sum_{i=1}^{a_1L/\Delta}   ({\tt s}_i+\Delta)   {\bf 1}_{{\mathcal E}_i^c}\Big]
\le a_1 L \Delta^{-3}.
$$

\noindent Since $L \Delta^{-3}= o(\beta_n L)$, the Markov inequality yields \eqref{sum-S-tau<Delta-2} 
and completes the proof of the lemma.
\end{proof}

\section{Proofs of Lemmas \ref{main:upper}, \ref{main2:upper},  and of Proposition  \ref{p:first_moment}}\label{s:rw-brw}

Recall that $(Y_n)_{n\ge 0}$ is a centered random walk starting at $0$ with step distribution $\mbox{Exp}(1)-1$. In the first subsection, we present several estimates on $(Y_n)_{n\ge 0}$ and provide  the proofs of Lemmas \ref{main:upper}, \ref{main2:upper}. The second subsection is devoted to the study of the branching random walks $\V$ and $\V^N$, and to the proof of Proposition  \ref{p:first_moment}. 

\subsection{Estimates of random walks: Proofs of Lemmas \ref{main:upper} and \ref{main2:upper}}\label{sub:rw}

\begin{lemma}\label{l:mogulski}
 Let $\upsilon>0$ and $0<d<\frac{\pi^2}{2}$. There exists $\Delta_0>0$ such that for all $\Delta\ge \Delta_0$,   $r_0\le 0\le r_0+\Delta$ and  $\ell\ge 1$,
\begin{equation}
    \p(r_0  \le Y_j\le r_0+\Delta,\, \forall \, j\in \lb 1,  \ell\rb) \le e^{\Delta^\upsilon-\frac{d}{\Delta^2}\ell}. \label{mogulskii_upper0} 
\end{equation}
\end{lemma}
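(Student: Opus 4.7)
The plan is to establish this Mogulskii-type confinement estimate via a block decomposition that reduces the problem to a single-block Brownian approximation. The three ingredients are the Markov property at a well-chosen intermediate scale, Donsker's invariance principle, and the explicit spectral gap $\pi^{2}/(2\Delta^{2})$ for the half-Laplacian on a strip of width $\Delta$.

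First, I would choose an intermediate time scale $T:=\lceil \Delta^{2+\upsilon/2}\rceil$, large enough that the Brownian approximation is accurate (since $T/\Delta^{2}=\Delta^{\upsilon/2}\to\infty$) yet small enough that the resulting error $dT/\Delta^{2}=d\Delta^{\upsilon/2}$ remains dominated by the target $\Delta^{\upsilon}$. Applying the Markov property at the times $T,2T,\ldots,mT$ with $m:=\lfloor \ell/T\rfloor$ yields
\[
\p\bigl(r_0\le Y_j\le r_0+\Delta,\ \forall\, j\le\ell\bigr)\le \Bigl(\sup_{y\in[r_0,r_0+\Delta]}\p_y\bigl(r_0\le Y_j\le r_0+\Delta,\ \forall\, j\le T\bigr)\Bigr)^{m}.
\]

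Next, I would estimate the one-block probability via Donsker's theorem: the rescaled walk $(Y_{\lfloor s\Delta^{2}\rfloor}/\Delta)_{s\ge 0}$ converges weakly to a standard Brownian motion $B$, and combined with the classical spectral formula
\[
\sup_{y\in[0,1]}\p_y\bigl(B_s\in[0,1],\ \forall\, s\le t\bigr)=\tfrac{4}{\pi}e^{-\pi^{2}t/2}\bigl(1+o(1)\bigr)\quad(t\to\infty),
\]
this gives, for any fixed $d<\pi^{2}/2$ and all $\Delta$ large enough,
\[
\sup_{y}\p_y\bigl(r_0\le Y_j\le r_0+\Delta,\ \forall\, j\le T\bigr)\le \exp\bigl(-dT/\Delta^{2}\bigr),
\]
uniformly in $r_0$ by translation invariance of the increments. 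Combining the two steps,
\[
\p(\cdots)\le \exp\bigl(-dmT/\Delta^{2}\bigr)\le \exp\bigl(-d\ell/\Delta^{2}+d\Delta^{\upsilon/2}\bigr)\le \exp\bigl(\Delta^{\upsilon}-d\ell/\Delta^{2}\bigr)
\]
for $\Delta$ large; the trivial case $\ell<T$ is handled directly since then $d\ell/\Delta^{2}<\Delta^{\upsilon}$.

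The main obstacle will be the \emph{uniformity in the starting point} of the one-block bound: Donsker's convergence is stated for a fixed initial condition and one cannot immediately interchange it with the supremum. The standard remedy is to work directly with the sub-Markov transition kernel on $[r_0,r_0+\Delta]$ and bound its spectral radius by a Perron-type argument using the test function $\varphi(x)=\sin(\pi(x-r_0)/\Delta)$: a Taylor expansion (together with a mild truncation of the exponential right tail of $X$ near the top of the strip) yields $\mathrm{E}[\varphi(x+X)\mathbf{1}_{x+X\in[r_0,r_0+\Delta]}]\le (1-\pi^{2}/(2\Delta^{2})+o(\Delta^{-2}))\varphi(x)$, so iterating gives the required uniform bound. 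Alternatively, one may first use a warm-up half-block of length $T/2$ to localize $Y$ in the middle third of the strip, losing only a polynomial factor in $\Delta$ that is absorbed by $e^{\Delta^{\upsilon}}$, and then apply Donsker from that central region. Either route is classical; the spectral argument gives the sharp constant $\pi^{2}/2$ most transparently.
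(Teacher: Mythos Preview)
Your block decomposition via the Markov property at times $T,2T,\ldots$ is exactly what the paper does, and your identification of the uniformity-in-starting-point issue as the only real difficulty is correct. The difference lies in how the one-block bound is obtained. The paper does not go through Donsker or a sine-eigenfunction spectral argument at all: it simply quotes Mogul'ski\u{\i}'s small-deviation theorem, which already gives
\[
\lim_{\Delta\to\infty}\frac{\Delta^2}{L_\Delta}\ln\p\bigl(-a\Delta\le Y_j\le(1-a)\Delta,\ \forall j\le L_\Delta\bigr)=-\frac{\pi^2}{2}
\]
for each fixed $a\in(0,1)$, and then handles the uniformity in $r_0$ by a one-line discretization trick: fix $m$ with $d(1+1/m)^2<\pi^2/2$, apply Mogul'ski\u{\i} to the \emph{enlarged} interval of width $(1+1/m)\Delta$ at the $m$ grid points $r_0\in\{(-1+k/m)\Delta\}$, and observe that any $r_0\in[-\Delta,0]$ is trapped between two consecutive grid points so that $[r_0,r_0+\Delta]$ sits inside one of the enlarged intervals.

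Both routes are valid. Your spectral/eigenfunction argument is more self-contained (it re-proves the upper half of Mogul'ski\u{\i}'s theorem) and makes the constant $\pi^2/2$ explicit, but the boundary analysis near $r_0$ and $r_0+\Delta$ and the tail truncation you allude to are genuine work. The paper's approach is shorter and cleaner: it outsources the hard analysis to the cited theorem and reduces the uniformity to a finite union bound at the cost of an $\varepsilon$-loss in the width, which is harmless since $d<\pi^2/2$ is strict.
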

\begin{proof}
 By  \cite[Theorem 1]{mogulski}, for all $a\in (0,1)$, and $L_{\Delta}$ such that $\lim_{\Delta\to\infty}\frac{L_{\Delta}}{\Delta^2}=\infty$,
\[
    \lim_{\Delta\to\infty} \frac{\Delta^2}{L_\Delta}\ln \p(-a\Delta  \le Y_j\le (1-a)\Delta,\, \forall \,  j\in \lb 1,  L_{\Delta}\rb)=-\frac{\pi^2}{2}.
\]

 Fix $m\ge 1$ such that $d (1+ \frac1{m})^2 < \frac{\pi^2}{2}$.
Let $L=\lfloor\frac{1}{d} \Delta^{2+\upsilon} \rfloor$. The above equation applied to $(1+\frac{1}{m})\Delta$  implies that we can find $\Delta_0>0$ such that for all $\Delta\ge \Delta_0$ and all $r_0\in\{ (-1+\frac{k}{m})\Delta, \, k=0,\ldots, m-1 \}$, \[
     \p\Big(r_0  \le Y_j\le r_0+ (1+\frac{1}{m})\Delta,\, \forall \,  j\in \lb 1,  L\rb\Big) \le e^{-\frac{d}{\Delta^2} L}.
\]
If $r_0\in [  (- 1+ \frac{k}{m}) \Delta, (- 1+ \frac{k+1}{m})\Delta]$ for some $k=0,\ldots,m-1$, we have
\[
\p\Big(r_0  \le Y_j\le r_0+\Delta,\, \forall \, j\in \lb 1,  L\rb)\le 
\p\Big( (- 1+ \frac{k}{m}) \Delta \le Y_j\le  \frac{k+1}{m}\Delta,\, \forall \, j\in \lb 1,  L\rb\Big).
\]
We have proved that for all $r_0\le 0\le r_0+\Delta$,
\[
\p(r_0  \le Y_j\le r_0+\Delta,\, \forall \, j\in \lb 1,  L\rb) \le e^{-\frac{d}{\Delta^2}L}.
\]
To finish the proof, it suffices to split any interval $\lb 0,\ell\rb$ into intervals of length $L$ and use the Markov property. For the bit of interval which has length smaller than $L$,  we use that $\Delta^{\upsilon} \ge \frac{d}{\Delta^2} L$. 
\end{proof}

We extend Lemma \ref{l:mogulski} to include the case of a small drift:

\begin{lemma}\label{l:upper} 
 Let $\upsilon>0$, $0< d<\frac{\pi^2}{2}$ and $c\ge 0$. There exists $\Delta_0>0$  such that for all $\Delta\ge \Delta_0$,   $r_0\le 0\le r_0+\Delta$ and  $\ell\ge 1$,
\begin{equation}
   \p(r_j  \le Y_j\le s_j,\, \forall \, j\in \lb 1,  L\rb) \le e^{\Delta^\upsilon-\frac{d}{\Delta^2}\ell}, \label{mogulskii_upper} 
\end{equation}
where $r_j=r_0+\frac{c}{\Delta^2}j$, $s_j=r_j+\Delta$. 
\end{lemma}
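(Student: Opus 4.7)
The strategy is to remove the small drift $c/\Delta^2$ by an exponential change of measure and reduce the problem to the centered case already covered by Lemma~\ref{l:mogulski}. The steps $\xi=\mathrm{Exp}(1)-1$ of $Y$ have MGF $\phi(\lambda)=e^{-\lambda}/(1-\lambda)$ for $\lambda<1$, so I would choose $\lambda$ satisfying $\phi'(\lambda)/\phi(\lambda)=\lambda/(1-\lambda)=c/\Delta^2$, i.e.\ $\lambda=c/(\Delta^2+c)=O(\Delta^{-2})$. Under the tilted law $P_\lambda$ defined by $dP_\lambda/d\p=e^{\lambda Y_\ell}/\phi(\lambda)^\ell$, the increments of $Y$ become distributed as $\mathrm{Exp}(1)/(1-\lambda)-1$, so $\tilde Y_j:=Y_j-cj/\Delta^2$ is centered under $P_\lambda$ and, in fact, equal in law to $Y'_j/(1-\lambda)$, where $Y'$ is a copy of the original walk.

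The change of measure yields
\begin{equation*}
\p\bigl(r_j\le Y_j\le s_j,\,\forall j\in\lb 1,\ell\rb\bigr)
=\e_\lambda\!\left[e^{-\lambda Y_\ell}\phi(\lambda)^\ell\,\mathbf{1}_{\{r_j\le Y_j\le s_j\}}\right].
\end{equation*}
On the event in question $Y_\ell\ge r_\ell=r_0+c\ell/\Delta^2$, and with the choice $\lambda/(1-\lambda)=c/\Delta^2$ a direct Taylor expansion gives $\ln\phi(\lambda)-\lambda c/\Delta^2=-\lambda^2/2-2\lambda^3/3-\cdots\le 0$; therefore the Radon--Nikodym prefactor is at most $e^{-\lambda r_0}\le e^{\lambda\Delta}\le e^{c/\Delta}$, a harmless $O(1)$ constant. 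It remains to bound $P_\lambda(r_j\le Y_j\le s_j,\forall j)$: translating by $cj/\Delta^2$ and rescaling by $1-\lambda$, this equals
\begin{equation*}
\p\bigl((1-\lambda)r_0\le Y_j\le(1-\lambda)(r_0+\Delta),\,\forall j\in\lb 1,\ell\rb\bigr),
\end{equation*}
a purely centered problem on an interval of width $(1-\lambda)\Delta\le\Delta$ still containing the origin.

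Now I would apply Lemma~\ref{l:mogulski} with an intermediate $d'\in(d,\pi^2/2)$ and $\upsilon'\in(0,\upsilon)$: since shrinking the interval only strengthens the Mogulskii bound, the probability above is at most $\exp\bigl(\Delta^{\upsilon'}-d'\ell/\Delta^2\bigr)$. Combining the two ingredients yields
\begin{equation*}
\p(r_j\le Y_j\le s_j,\,\forall j)\le \exp\bigl(c/\Delta+\Delta^{\upsilon'}-d'\ell/\Delta^2\bigr),
\end{equation*}
which is at most $\exp(\Delta^\upsilon-d\ell/\Delta^2)$ once $\Delta_0$ is taken large enough in terms of $c,d,\upsilon$. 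The main delicate point is ensuring that the tilting cost does not spoil the exponent: the specific choice of $\lambda$ matching the drift is essential because it makes the $\ell$-dependent part of the Radon--Nikodym contribution non-positive, leaving only a boundary term $O(\Delta^{-1})$ to compete with the Mogulskii rate $\pi^2/2$, which is then easily absorbed in the slack created by the strict inequalities $d<d'<\pi^2/2$ and $0<\upsilon'<\upsilon$.
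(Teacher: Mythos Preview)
Your proposal is correct and follows essentially the same approach as the paper: an exponential tilt with parameter $q=\frac{c/\Delta^2}{1+c/\Delta^2}$ (your $\lambda$) to remove the drift, after which the tilted increments are $(1+c/\Delta^2)(\mathrm{Exp}(1)-1)+c/\Delta^2$ and the event reduces to the centered walk staying in an interval of width $\Delta/(1+c/\Delta^2)$, so Lemma~\ref{l:mogulski} applies. Your explicit use of intermediate parameters $d'\in(d,\pi^2/2)$ and $\upsilon'\in(0,\upsilon)$ to absorb the $O(\Delta^{-1})$ boundary term is a useful clarification that the paper leaves implicit.
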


\begin{proof}
Let $a=\frac{c}{\Delta^2}$. For $q<1$, we have $\e[e^{qY}]=\frac{e^{-q}}{1-q}$ and $\frac{\e[Ye^{qY}]}{\e[e^{qY}]} = \frac{q}{1-q}$. We choose $q=\frac{a}{1+a}$ so that $Y$ has mean $a=\frac{c}{\Delta^2}$ after tilting. The tilted distribution is then the distribution of $a + (1+a)Y$.  Using this change of measure and the identities $\frac{1}{1-q}=1+a$, $q\ell +q(a\ell +(1+a)Y_\ell)=a(Y_\ell+\ell)$, we get 
\begin{align*}    
    \p(r_j  \le Y_j\le s_j,\, \forall \, j\in \lb 1,  L\rb)
    &=
     (1+a)^{\ell} \e\Big[e^{- a(Y_\ell+\ell)} {\bf 1}_{\{ \frac{r_0}{1+a}  \le Y_j\le \frac{r_0+\Delta}{1+a},\, \forall \, j\in \lb 1,  L\rb \}}\Big]
     \\
     &\le
     e^{-  \frac{a r_0}{1+a}} \p \Big(\frac{r_0}{1+a}  \le Y_j\le \frac{r_0+\Delta}{1+a},\, \forall \, j\in \lb 1,  L\rb\Big).
  \end{align*}
      Since $|a r_0| \le \frac{c}{\Delta}$,   the conclusion follows from    \eqref{mogulskii_upper0}. 
\end{proof}

\begin{lemma}\label{l:mogulskii_lower}
 Let $\upsilon >0 $, $d'>\frac{\pi^2}{2}$, $c\ge 0$ and $\varepsilon>0$. There exists $\Delta_0>0$ such that for $\Delta\ge \Delta_0$,  $r_0\le 0\le r_0+\Delta$ and  $\ell\ge \lfloor \Delta^{2+\varepsilon}\rfloor$,
\begin{equation}
    \p(r_j  \le Y_j\le s_j,\, \forall \,   j\in \lb 1, \ell\rb,\,\, Y_\ell\in [r_\ell, r_\ell+\upsilon \Delta]) \ge 
   e^{-\frac{d'}{\Delta^2}\ell}, \label{mogulskii_lower}
\end{equation}
where $r_j=r_0+\frac{c}{\Delta^2}j$, $s_j=r_j+\Delta$. Moreover, the conclusion in \eqref{mogulskii_lower} holds with $[s_\ell-\upsilon \Delta, s_\ell]$  in place of  $[r_\ell, r_\ell+\upsilon \Delta]$.
\end{lemma}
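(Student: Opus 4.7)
The plan is to reduce to the centered case ($c=0$) by the same tilting as in Lemma \ref{l:upper}, and then to establish a matching lower bound to Mogulskii's theorem with a pinned endpoint. Concretely, set $a=c/\Delta^2$ and tilt by $e^{qY_\ell}$ with $q=a/(1+a)$, so that the probability in \eqref{mogulskii_lower} equals $(1+a)^\ell\,\widetilde\e[e^{-a(Y_\ell+\ell)}\mathbf 1_{B}]$, where under the tilted law the walk has drift $a$ and $B$ corresponds, after centering, to a tube event for the original centered walk on the slightly narrower window $[r_0/(1+a),(r_0+\Delta)/(1+a)]$ with endpoint in the rescaled subinterval. On $B$ we have $|Y_\ell|\le 2\Delta$, so $(1+a)^\ell e^{-a(Y_\ell+\ell)}\ge e^{-a^2\ell/2-2a\Delta}=e^{-O(\ell/\Delta^4)-O(1/\Delta)}$, and likewise the narrowing of the tube changes the target constant by a factor $(1+a)^2$. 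Both corrections are $o(\ell/\Delta^2)$, hence negligible when the target constant $d'$ is chosen strictly above $\pi^2/2$; in particular it suffices to prove the lemma with $c=0$ and some $d''\in(\pi^2/2,d')$.

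Next, for the centered walk, the content is the matching lower bound to the estimate used in Lemma \ref{l:mogulski}. By \cite[Theorem 1]{mogulski} applied in the opposite direction, for any $d_0>\pi^2/2$ and any $L_\Delta$ with $L_\Delta/\Delta^2\to\infty$,
\[
\liminf_{\Delta\to\infty}\frac{\Delta^2}{L_\Delta}\ln\p\bigl(r_0\le Y_j\le r_0+\Delta,\,\forall j\le L_\Delta\bigr)\ge -\tfrac{\pi^2}{2},
\]
so in particular the probability is at least $e^{-d_0 L_\Delta/\Delta^2}$ for $\Delta$ large. I would split $\lb 1,\ell\rb$ into $m=\lceil \ell/L\rceil$ blocks of length $L=\lfloor \Delta^{2+\varepsilon/2}\rfloor$ (plus a possible shorter remainder) and apply the block estimate inductively via the Markov property: after each block, force the walk into the fixed subinterval $[r_0,r_0+\upsilon\Delta]$ by choosing intermediate targets accordingly. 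The product of the block probabilities gives $e^{-d_0\ell/\Delta^2}$ with a multiplicative slack that can be absorbed into the constant by taking $d_0<d''$.

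The remaining point, which is the main technical obstacle, is to upgrade the pure tube estimate into one with a pinned endpoint in a subinterval of length $\upsilon\Delta$. I would handle this by combining the block argument above with a local-CLT / Brownian-approximation argument on the final stretch: on the last $\Delta^2$ steps, rescaling by $\Delta$ one has a centered walk that (by a KMT or Donsker-type strong approximation) is close to a Brownian motion of variance $1$ on $[0,1]$, and the probability that such a Brownian bridge-like path stays in a tube of (rescaled) width $1$ and ends in a subinterval of length $\upsilon$ is a positive constant — concretely, bounded below by $c\upsilon$ times the principal eigenfunction $\sin(\pi\cdot/\Delta)$ evaluated at the starting point, which is of order $1$ since we can arrange to enter the final block at a mid-tube position. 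This yields an additional multiplicative factor independent of $\Delta$, which is absorbed into the slack $d''-\pi^2/2$.

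Finally, the variant with $[s_\ell-\upsilon\Delta,s_\ell]$ in place of $[r_\ell,r_\ell+\upsilon\Delta]$ follows from the identical argument after choosing the intermediate and final target subintervals at the upper end of the tube rather than the lower end; no asymmetry of the step distribution enters since only the shape of the tube (of width $\Delta$) and the pinning mass play a role.
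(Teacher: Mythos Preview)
Your reduction to $c=0$ via the exponential tilt is correct and matches the paper's argument (the Radon--Nikodym factor and the $o(\ell/\Delta^2)$ error terms you compute are the right ones). The block decomposition of $\lb 1,\ell\rb$ into pieces of length $\lfloor\Delta^{2+\varepsilon/2}\rfloor$ is also what the paper does.

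The gap is in how you obtain the endpoint constraint on a single block. Your plan is to run the pure tube estimate for $L-\Delta^2$ steps and then invoke a KMT/Brownian comparison on the last $\Delta^2$ steps, relying on the claim that ``we can arrange to enter the final block at a mid-tube position''. But that claim is itself an endpoint constraint on the first $L-\Delta^2$ steps, precisely what you are trying to establish: the pure tube estimate gives no control on where $Y_{L-\Delta^2}$ sits inside $[r_0,r_0+\Delta]$, and if it happens to lie within $o(\Delta)$ of the boundary your Brownian lower bound degenerates. As written the argument is circular. The paper closes this more simply in two moves. First, it invokes Mogul'skii's theorem \emph{with} the endpoint constraint already included --- the limit \eqref{temp_mogulski} is stated with the event $\{Y_{L_\Delta}\in[-a\Delta,(-a+\upsilon)\Delta]\}$, which is covered by \cite[Theorem~1]{mogulski} --- so no KMT step is needed. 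Second, to make the estimate uniform over all $r_0\in[-\Delta,0]$ (in particular when $0$ lies near an edge of the tube), the paper inserts a preliminary stage of $\lfloor\Delta^2\rfloor$ steps during which the walk is sent to the middle third $r_0+[\tfrac{\Delta}{3},\tfrac{2\Delta}{3}]$ while staying in $[r_0,r_0+\Delta]$, at a cost of at least $C/\Delta$; this is an elementary random-walk estimate, and the factor $C/\Delta$ is harmless because $\ln\Delta=o(\Delta^{\varepsilon/2})=o(L/\Delta^2)$. From that mid-tube position the endpoint-pinned block estimate applies, and iterating over blocks as you propose finishes the proof.
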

\begin{proof}   
First, we treat the case $c=0$.  
  By  \cite[Theorem 1]{mogulski}, for all $a\in (0,1)$, and $L_{\Delta}$ such that $\lim_{\Delta\to\infty} \frac{L_{\Delta}}{\Delta^2}=\infty$,
\begin{equation}\label{temp_mogulski}
    \lim_{\Delta\to\infty} \frac{\Delta^2}{L_\Delta}\ln \p(-a\Delta  \le Y_j\le (1-a)\Delta,\, \forall \,  j\in \lb 1,  L_{\Delta}\rb,\, Y_{L_{\Delta}} \in [-a\Delta, (-a+\upsilon)\Delta])=-\frac{\pi^2}{2}.
\end{equation}
Let $d''\in (\frac{\pi^2}{2},d')$ and $L=\lfloor  \Delta^{2+\frac{\varepsilon}{2}}\rfloor-\lfloor \Delta^2\rfloor$. Following a reasoning  analogous to the proof of Lemma \ref{l:mogulski}, fix $m> \max(\frac{1}{\upsilon},4)$ such that $d''(1-\frac{1}{m})^2>\frac{\pi^2}{2}$. We can find $\Delta_0>0$ such that for all $\Delta\ge\Delta_0$ and all $r_0\in \{(-\frac{2}{3}+\frac{k+1}{m})\Delta,\, k=0,\ldots,\lfloor \frac{m}{3}\rfloor\}$,
\[
\p(r_0\le Y_j\le r_0+(1-\frac{1}{m})\Delta,\, \forall \,   j\in \lb 1,  L\rb,\, Y_{L} \in [r_0,r_0+(\upsilon-\frac{1}{m})\Delta] )\ge e^{-\frac{d''}{\Delta^2}L}.
\]
If $r_0\in [(-\frac{2}{3} + \frac{k}{m})\Delta, (-\frac23 +\frac{k+1}{m})\Delta]$, we write 
\begin{align*}
& \p(r_0\le Y_j\le r_0+\Delta,\, \forall \,   j\in \lb 1,  L\rb,\, Y_{L} \in [r_0,r_0+\upsilon\Delta] ) 
\\
\ge & \p( a_{k+1} \le Y_j\le a_{k}+\Delta,\, \forall \,   j\in \lb 1,  L\rb,\, Y_{L} \in [a_{k+1},a_k+\upsilon\Delta] )
\end{align*}
with $a_j=(-\frac23 +\frac{j}{m})\Delta$. Therefore for any $r_0 \in [ -\frac23 \Delta, -\frac13 \Delta]$, 
\[
\p(r_0\le Y_j\le r_0+\Delta,\, \forall \,   j\in \lb 1,  L\rb,\, Y_{L} \in [r_0,r_0+\upsilon\Delta] )\ge e^{-\frac{d''}{\Delta^2}L}.
\]
 Moreover,  the probability for $Y$ to lie in $r_0+[\frac13 \Delta,\frac23 \Delta]$  at  time   $\lfloor \Delta^2\rfloor$ before exiting $[r_0,r_0+\Delta]$ is greater than  $\frac{C}{\Delta}\ge e^{-\frac{d'-d''}{\Delta^2}L}$ uniformly in  $r_0\in [-\Delta,0]$ and $\Delta$ large enough. By the Markov property at time $\lfloor \Delta^2\rfloor$, we get that \eqref{mogulskii_lower} holds for $r_0 \in [-\Delta,0]$ and $\ell= \ell_0:=\lfloor  \Delta^{2+\frac{\varepsilon}{2}}\rfloor$. 
For $\ell\ge  \lfloor  \Delta^{2+\varepsilon}\rfloor$, we  divide the time interval $\lb 0,\ell\rb$ into intervals of length $\ell_0$. If the first interval has length $L_0 \le \ell_0$, we apply the bound
\[
\p(r_0\le Y_j\le r_0+\Delta,\, \forall \,   j\in \lb 1,  L_0\rb)\ge \p(r_0\le Y_j\le r_0+\Delta,\, \forall \,   j\in \lb 1,  \ell_0\rb).
\]
We get that for all $\ell\ge  \lfloor \Delta^{2+\varepsilon}\rfloor$,
\[
\p(r_0\le Y_j\le r_0+\Delta,\, \forall \,   j\in \lb 1, \ell\rb,\, Y_\ell \in [r_0,r_0+\upsilon\Delta])\ge e^{-\frac{d'}{\Delta^2}(\ell+\ell_0)}.
\]
It remains to observe that $\ell_0 = o( \Delta^{2+\varepsilon})$ as $\Delta\to \infty$. Thus, we have proved \eqref{mogulskii_lower} for $c=0$. The general case is deduced by a change of measure as in Lemma \ref{l:upper}. 

Finally, the same conclusion holds with $[s_\ell-\upsilon \Delta, s_\ell]$ instead of $[r_\ell, r_\ell+\upsilon \Delta]$. Its proof being entirely analogous,  we omit the details. \end{proof}

We can now give the proof  of Lemma \ref{main:upper}.

\begin{proof}[Proof of Lemma \ref{main:upper}.]  It suffices to prove the lemma for $c\in (0,\frac12)$ and $L\ge \Delta^{\frac{2}{1-2c}}$. Indeed, for $c>0$, we can choose $c'\in (0,\frac{1}{2})$ such that $c'<c$ and $\frac{2}{1-2c'}<2+c$.

For $d \in (0, \frac{\pi^2}2)$, let  $\eta \in (0,1)$ be sufficiently small such that $(2 + \eta) (\frac12- c) < 1$ and $d (1+\eta)^4 < \frac{\pi^2}2$. Consider large $\Delta$  (how large will be made clear  later) and $L\ge \Delta^{\frac{2}{1-2c}}$. Notice that $\frac{L}{\Delta^{2+\eta}}\to\infty$ as $\Delta\to \infty$. We will divide the time interval $\lb 0, L\rb$ into  smaller intervals of length $  \Delta^{2+\eta}$, and similarly divide  the space into intervals of length $\eta \Delta$.  Assume for brevity that both $ \Delta^{2+\eta}$ and $\frac{L}{\Delta^{2+\eta}}$ are integers.  For $j\ge 0$, let $I_j:=\lb j \Delta^{2+\eta},  (j+1) \Delta^{2+\eta}\rb$.
For each $j$, we record $M_j$ the number of the form $\eta k \Delta $, with $k$ integer, such that $M_j\ge \max_{0\le \ell \le   (j+1) \Delta^{2+\eta}} Y_\ell > M_j - \eta \Delta$. Write $B:= \{Y_L \le \frac\Delta{c}     ,\, \min_{0\le i  \le j \le L}(Y_j-Y_i) \ge - \Delta\}$. Notice that on $B$, 
\[
\max_{\lb 0,L\rb} Y\le \frac{\Delta}{c}+ \Delta .
\] 

Let $C:= \frac1{c}+2$, and consider $j\in \lb 0,  \frac{L}{\Delta^{2+\eta}}-1\rb$.  Notice that  $M_j$ takes fewer than $\frac{C}{\eta}$ possible values. In particular, since $M_j$ is non-decreasing in $j$,  the number of indices $j$ such that $M_j\neq M_{j+1}$ (call this set $J$)  is at most  $  \frac{C}{\eta}$.

 For $j\not\in J$,   $M_j=M_{j+1}$. Furthermore, since $\max_{0\le \ell \le   (j+1)\Delta^{2+\eta}} Y_\ell  > M_j- \eta \Delta$, and on $B$, one cannot make a drop larger than $\Delta$,  we conclude that  for all $i\in I_{j+1}$, $Y_i \in [M_j- (1+\eta)\Delta, M_j]$.  Let $J_0$ be a deterministic subset of $\lb 0,\frac{L}{\Delta^{2+\eta}}-1\rb$ with cardinality smaller than $\frac{C}{\eta}$. Applying the Markov property at time $ (j+1)\Delta^{2+\eta}$ for each $j \not\in J$ (in decreasing order of $j$) and ignoring those  $j\in J$, we obtain that $$ \p\Big( B ,  \, J=J_0\Big)
 \le
\Big( \max_{m= \eta k \Delta, 0\le k \le \frac{C}\eta} \,\sup_{r\in [m-(1+\eta)\Delta,  m]} \p_r\big( m-(1+\eta)\Delta \le Y_i \le m, \forall i \le  \Delta^{2+\eta}\big)\Big)^{\#J_0^c} ,
$$

\noindent with $\#J_0^c= \frac{L}{\Delta^{2+\eta}}- \#J_0 \ge \frac{L}{\Delta^{2+\eta}} - \frac{C}{\eta}$. Applying \eqref{mogulskii_upper0} to $\upsilon=\frac\eta2$ and $ d (1+\eta)^4< \frac{\pi^2}{2}$, for all large $\Delta$, uniformly in $m$,  $$\sup_{r\in [m-(1+\eta) \Delta,  m]} \p_r\big( m-(1+\eta) \Delta \le Y_i \le m, \,\forall \,i \le \Delta^{2+\eta}\big) \le e^{\Delta^\upsilon- d   (1+\eta)^2  \Delta^\eta}  \le e^{ - d   (1+\eta)   \Delta^\eta},$$

 \noindent for all $\Delta\ge \Delta_0$ (if $\Delta_0$ is sufficiently large). Hence $$ \p\Big( B ,  \, J=J_0\Big)
 \le e^{- d(1+\eta) \frac{L}{\Delta^2} + d(1+\eta) \frac{C}{\eta} \Delta^\eta}
 \le e^{- d(1+\frac{\eta}2) \frac{L}{\Delta^2}} 
$$
if $\Delta_0$ is big enough. Summing over all  possible $J_0$ (there are at most $2^{L /\Delta^{2+\eta}}$ of them) completes the proof of the lemma, as long as $\Delta_0$ is chosen large enough.  \end{proof}

The proof of Lemma \ref{main2:upper} proceeds in a similar way as that for Lemma \ref{main:upper}: 

\begin{proof}[Proof of Lemma \ref{main2:upper}]   Let $d\in (0, \frac{\pi^2}2)$, and choose $\eta \in (0, c/2)$ such that $d (1+\eta)^4 < \frac{\pi^2}{2}$. For brevity, we assume that   $\Delta^{2+\eta}$ and $\frac{L}{\Delta^{2+\eta}}$ are integers.  We divide the space into intervals of length $\eta \Delta$, and the time $\lb 0,L\rb$ into  subintervals $I_j=\lb j \Delta^{2+\eta}, (j+1) \Delta^{2+\eta}\rb$ of length $\Delta^{2+\eta}$, for $j \in \lb 0, \frac{L}{\Delta^{2+\eta}} -1 \rb$.  We say that $I_j$ is bad if it contains a time $\ell$ such that $\inf_{m\in \lb \ell,L \rb} Y_m < Y_\ell-\Delta$. Otherwise, $I_j$ is called good. We call $I_{\text{good}}$ the union of good intervals $I_j$ and let $M_j$ be the smallest multiple of $\eta \Delta$ greater than $\max_{\ell\in \lb 0, (j+1) \Delta^{2+\eta}\rb \cap I_{\text{good}}} Y_\ell$,
  with the convention that $\sup_\emptyset:=0$.  We let $J$ be the set of $j\in \lb0,\frac{L}{\Delta^{2+\eta}}-1\rb$ such that $I_{j+1}$ is bad {\it or} $M_{j+1}\neq M_{j}$.

  If $j\notin J$, $M_{j+1}=M_j$ and $I_{j+1}$ is good. By definition of $M_j$, there is some $\ell \le (j+1) \Delta^{2+\eta}$ such that $M_j < Y_\ell +\eta \Delta$ and $\inf_{m \in \lb\ell,L\rb} Y_m \ge Y_\ell -\Delta$. We deduce that $\inf_{m\in I_{j+1}} Y_m \ge M_j - (1+\eta)\Delta=M_{j+1}-(1+\eta)\Delta$.

On the event $\{ \max_{j\in \lb 0, L\rb} Y_j \le \frac{L}{\Delta^{1+c}}     ,\, m_\Delta(L) \le \frac{L}{\Delta^{2+c}} \}$ as in the probability term of Lemma \ref{main2:upper}, there are at least $\frac{L}{\Delta^{2+\eta}}-\frac{L}{\eta \Delta^{2+c}}$ intervals for which $M_j=M_{j+1}$. Moreover, there are less than $\frac{L}{\Delta^{2+c}}$ bad intervals. Hence $\#J^c\ge  \frac{L}{\Delta^{2+\eta}}-\frac{L}{\eta \Delta^{2+c}}- \frac{L}{\Delta^{2+c}} \ge \frac{L}{(1+\eta)\Delta^{2+\eta}} $ if  $\Delta\ge \Delta_0$ for some large enough $\Delta_0$.  Exactly as in the proof of Lemma \ref{main:upper},  we apply the Markov property at time $ (j+1)\Delta^{2+\eta}$ for each $j \not\in J$ (in decreasing order of $j$), ignore those  $j\in J$, then apply \eqref{mogulskii_upper0} to $\upsilon=\frac\eta2$ and $ d (1+\eta)^4< \frac{\pi^2}{2}$ to obtain that $$ \p\Big( \max_{j\in \lb 0, L\rb} Y_j \le \frac{L}{\Delta^{1+c}}     ,\, m_\Delta(L) \le \frac{L}{\Delta^{2+c}}, J=J_0\Big)
 \le
\Big( e^{- d (1+\eta)^2 \Delta^\eta}\Big)^{\#J_0^c}
\le e^{- d (1+\eta) \frac{L}{\Delta^2}} ,
$$

\noindent by using the fact that $\#J_0^c \ge \frac{L}{(1+\eta)\Delta^{2+\eta}}$.  Finally, there are fewer than $2^{\frac{L}{\Delta^{2+\eta}}}$ possible choices for the set $J$. The lemma then follows by a  union bound.
  \end{proof}

\medskip

We end this section by an elementary fact about the order statistics of a family of i.i.d. exponential random variables.

\begin{lemma}\label{l:upper_stoc}
Let $(E_i)_{1\le i\le \ell}$ be $\ell$ i.i.d. exponential r.v. with mean $1$. Let $E_{(1)}>E_{(2)}>\ldots >E_{(\ell)}$ be their decreasing order statistics. Then for any $k\in \{1,\ldots, \ell\}$, 
\[
\e\Big[\sum_{i=1}^{k} E_{(i)}\Big]\le k +  k \ln \frac{\ell}{k}.
\]
\end{lemma}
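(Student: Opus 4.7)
The plan is to use the Rényi representation of the order statistics of i.i.d.\ exponential random variables, which reduces everything to an exact computation followed by a standard harmonic-sum bound. Specifically, if $E_{[1]}<E_{[2]}<\ldots<E_{[\ell]}$ denotes the increasing ordering (so that $E_{(i)}=E_{[\ell-i+1]}$), then $E_{[1]}$ is exponential with rate $\ell$, and for $j\in\lb 2,\ell\rb$ the gaps $E_{[j]}-E_{[j-1]}$ are independent exponentials with rate $\ell-j+1$. Translating to the decreasing ordering, the gaps $E_{(i)}-E_{(i+1)}$ for $i\in\lb 1,\ell-1\rb$ are independent with mean $1/i$, and $E_{(\ell)}$ has mean $1/\ell$.

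From this representation, I would first compute the exact expectation of each individual order statistic:
\[
\e[E_{(i)}]=\e[E_{(\ell)}]+\sum_{j=i}^{\ell-1}\e[E_{(j)}-E_{(j+1)}]=\frac{1}{\ell}+\sum_{j=i}^{\ell-1}\frac{1}{j}=\sum_{j=i}^{\ell}\frac{1}{j}.
\]
Summing over $i\in\lb 1,k\rb$ and switching the order of summation gives
\[
\e\Big[\sum_{i=1}^k E_{(i)}\Big]=\sum_{j=1}^{\ell}\frac{\min(j,k)}{j}=\sum_{j=1}^{k}1+k\sum_{j=k+1}^{\ell}\frac{1}{j}=k+k\sum_{j=k+1}^{\ell}\frac{1}{j}.
\]

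The last step is to bound the tail of the harmonic sum by the integral: $\sum_{j=k+1}^{\ell}\frac{1}{j}\le\int_{k}^{\ell}\frac{dx}{x}=\ln\frac{\ell}{k}$, which yields the claimed inequality $\e[\sum_{i=1}^k E_{(i)}]\le k+k\ln\frac{\ell}{k}$. There is no genuine obstacle here; the only point to be careful about is choosing the correct form of Rényi's representation (decreasing vs.\ increasing ordering) so that the mean of the $i$-th gap in the decreasing ordering is indeed $1/i$ and not $1/(\ell-i+1)$.
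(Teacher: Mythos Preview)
Your proof is correct. The Rényi representation gives the exact identity $\e[\sum_{i=1}^k E_{(i)}]=k+k\sum_{j=k+1}^{\ell}\frac{1}{j}$, and the integral bound on the harmonic tail finishes it cleanly.

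The paper takes a different, equally short route: it writes $\sum_{i=1}^k E_{(i)}=\int_0^\infty \min\big(\sum_{i=1}^\ell {\bf 1}_{\{E_i\ge x\}},\,k\big)\,dx$, then bounds the integrand by $k$ for $x\le \ln(\ell/k)$ and by $\sum_i {\bf 1}_{\{E_i\ge x\}}$ (with expectation $\ell e^{-x}$) for $x>\ln(\ell/k)$, and integrates. So the paper uses only the tail probability $\p(E_1\ge x)=e^{-x}$ and never computes individual order-statistic means, whereas your argument extracts the exact expectation before bounding. Your version is slightly more informative (it shows the inequality is essentially sharp); the paper's version is more robust in that it would adapt immediately to any distribution with an explicit tail bound, without needing a spacings representation.
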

\begin{proof}
    We observe that for any $x>0$,
    \[
    \sum_{i=1}^k {\bf 1}_{\{E_{(i)} \ge x \}} = \min(\sum_{i=1}^\ell {\bf 1}_{\{E_i \ge x\}},k). 
    \]
    We bound the right-hand side by $\sum_{i=1}^\ell {\bf 1}_{\{E_i \ge x\}}$ if $\ell e^{-x}< k$ and by $k$ otherwise. Taking the expectation and integrating over $x$ yields the result.
\end{proof}

\subsection{Results on branching random walks: Proof of Proposition \ref{p:first_moment}} \label{sub:brw}
Recall that $\V$ is a one-dimensional branching random walk  whose  distribution of the displacements  is given by \eqref{eq:BRW-V}. We suppose that it starts with one particle. The many-to-one lemma \cite[Section 1.3]{zhan}  reads as follows:
\begin{equation}\label{eq:many-to-one_V}
\e[\sum_{|x|=\ell} e^{-\V(x)}F(\V(x_1),\V(x_2),\ldots,\V(x_\ell))]= \e[F(Y_1,\ldots,Y_\ell)],
\end{equation}

\noindent where as before, $|x|$ denotes the generation of $x$ in the genealogical tree of $\V$, and for any $1\le i \le \ell$, $x_i$ denotes the ancestor  of $x$ at generation $i$ (so $x_\ell=x$ when $|x|=\ell$). 

We will use the following lemma for second moment computations. 

\begin{lemma}
Let  $\upsilon>0$, $0<d<\frac{\pi^2}{2}$, and let $\Delta$,  $r_j\le s_j$  be as in Lemma \ref{mogulskii_upper}. For any $\ell\ge 1$ and $\delta>r_\ell$,  we have \begin{equation}\label{second_moment}
\e\Big[\Big(\sum_{|x|=\ell} {\bf 1}_{\{r_j  \le \V(x_j) \le s_j,\, \forall\, 1\le j\le \ell\}}{\bf 1}_{\{\V(x_\ell)\le \delta\}}\Big)^2\Big] \le 
 e^{3\Delta^\upsilon+2 \delta-2d \frac{\ell-1}{\Delta^2}}\sum_{m=0}^{\ell-1} e^{ d \frac{ m}{\Delta^2}- r_m} + e^{ \delta +\Delta^\upsilon-\frac{d}{\Delta^2}\ell}. 
\end{equation}
\end{lemma}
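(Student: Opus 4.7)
The plan is to expand the square as a double sum over ordered pairs $(x,y)$ with $|x|=|y|=\ell$ and decompose according to the generation $m\in\{0,1,\ldots,\ell\}$ of the most recent common ancestor (MRCA) of $x$ and $y$. Since the indicator is idempotent, the diagonal contribution ($m=\ell$, i.e.\ $x=y$) equals the first moment, which by the many-to-one identity \eqref{eq:many-to-one_V}, the bound $e^{Y_\ell}\le e^\delta$ on the event $\{Y_\ell\le \delta\}$, and Lemma \ref{l:upper} is at most $e^{\delta+\Delta^\upsilon-d\ell/\Delta^2}$, which is exactly the second term on the right-hand side of \eqref{second_moment}.

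For the off-diagonal contribution at MRCA $m<\ell$, I would condition on the path from the root to the common ancestor $v$ with $|v|=m$ and exploit that the subtrees rooted at distinct children $c\ne c'$ of $v$ are conditionally independent copies of $\V$, shifted by their starting positions. Setting
\[
 h(w):=\e_w\Big[\sum_{|z|=\ell-m-1}\prod_{i=1}^{\ell-m-1}{\bf 1}_{\{r_{m+1+i}\le\V(z_i)\le s_{m+1+i}\}}{\bf 1}_{\{\V(z_{\ell-m-1})\le\delta\}}\Big]
\]
and $f(w):=h(w){\bf 1}_{\{r_{m+1}\le w\le s_{m+1}\}}$, the $m$-th off-diagonal term rewrites as $\e\bigl[\sum_{|v|=m}{\bf 1}_{\{r_j\le\V(v_j)\le s_j,\,\forall j\le m\}}\sum_{c\ne c'}f(\V(c))f(\V(c'))\bigr]$. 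The crucial observation is that the children of $v$, together with their displacements, form a Poisson point process, so by the second factorial moment formula the inner double sum over distinct ordered pairs has conditional expectation $\bigl(\int f(a+u)\,\mu(du)\bigr)^{2}$ given $\V(v)=a$, which by \eqref{eq:many-to-one_V} at generation $1$ equals $\bigl(\e[e^{Y_1}f(a+Y_1)]\bigr)^{2}$.

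To bound $f$, a further application of many-to-one gives $h(w)\le e^{\delta-w}\,\p(r_{m+1+i}\le w+Y_i\le s_{m+1+i},\,\forall 1\le i\le \ell-m-1)$; Lemma \ref{l:upper} (applied after translating the walk by $-w$, with the hypothesis $r_0\le 0\le r_0+\Delta$ ensured precisely by the factor ${\bf 1}_{\{r_{m+1}\le w\le s_{m+1}\}}$ in $f$) then yields $f(w)\le e^{\delta-w+\Delta^\upsilon-d(\ell-m-1)/\Delta^2}$ on its support. Consequently $\e[e^{Y_1}f(a+Y_1)]\le e^{\delta-a+\Delta^\upsilon-d(\ell-m-1)/\Delta^2}$, whence $\bigl(\int f(a+u)\,\mu(du)\bigr)^{2}\le e^{2\delta-2a+2\Delta^\upsilon-2d(\ell-m-1)/\Delta^2}$. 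A last application of many-to-one along the common spine reduces the $m$-th contribution to $e^{2\delta+2\Delta^\upsilon-2d(\ell-m-1)/\Delta^2}\,\e[e^{-Y_m}{\bf 1}_{\{r_j\le Y_j\le s_j,\,\forall j\le m\}}]$, which, using $e^{-Y_m}\le e^{-r_m}$ on the event and Lemma \ref{l:upper} for the surviving probability, is bounded by $e^{3\Delta^\upsilon+2\delta-2d(\ell-1)/\Delta^2+dm/\Delta^2-r_m}$. Summing over $m=0,\ldots,\ell-1$ recovers the first term of \eqref{second_moment}.

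The main bookkeeping obstacle is the triple invocation of the many-to-one identity---along the common spine, at the splitting step via the Poisson offspring, and inside each of the two descending subtrees---together with the careful check at every stage that the translated starting point satisfies the hypothesis $r_0\le 0\le r_0+\Delta$ required by Lemma \ref{l:upper}. Everything else is routine algebraic bookkeeping of exponential factors.
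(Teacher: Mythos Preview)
Your proof is correct and follows essentially the same approach as the paper: the same MRCA decomposition of the square, the same bound on the diagonal via many-to-one and Lemma~\ref{l:upper}, and the same treatment of the off-diagonal term via conditional independence of the two subtrees, the Poisson second factorial moment (the paper writes this as $\e[\sum_{u\neq u'}e^{-(\V(u)+\V(u'))}]=1$), and a final many-to-one along the common spine. The bookkeeping of exponential factors is identical.
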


\begin{proof} For any $|x|=\ell\ge 1$, we let
\begin{equation*} 
    F_x={\bf 1}_{\{r_j  \le \V(x_j) \le s_j,\, \forall\, 1\le j\le \ell\}}{\bf 1}_{\{\V(x_\ell)\le \delta\}}.
\end{equation*}

\noindent Discussing on the ancestor of $x$ and $y$, 
\[
\Big(\sum_{|x|=\ell} F_x\Big)^2=\sum_{|x|=|y|=\ell} F_xF_y
 =
\sum_{m=0}^{\ell-1} 
\sum_{|z|=m} 
\sum_{z_i\neq z_j} \Sigma(z_i)\Sigma(z_j) 
 + \sum_{|x|=\ell} F_x^2 
\]
\noindent where $z_i$ is the $i$-th child of $z$ and for $|u|=m+1\le \ell$,
\[
\Sigma(u) = \sum_{\substack{|x|=\ell, x_{m+1}=u}}  F_x.
\]
\noindent  Then 
\[
\Sigma(u)= {\bf 1}_{\{ \V(u_j) \in [r_j,s_j],\, \forall\, 1\le j\le m\}} \widetilde{\Sigma}(u) 
\]
with 
\[
\widetilde{\Sigma}(u) := \sum_{\substack{|x|=\ell, x_{m+1}=u}}  {\bf 1}_{\{ \V(x_j) \in [r_j,s_j],\, \forall\, m < j \le \ell,\, \V(x) \le \delta \}}.
\]

\noindent  Let  ${\mathcal F}_i:=\sigma\{\V(v),\, |v|\le i\}$ for $i\ge 0$. By the branching property and \eqref{eq:many-to-one_V},  \begin{align*}    
\e[\widetilde{\Sigma}(u) \mid {\mathcal F}_{m+1}, \V(u)=a] 
&= \e \Big[\sum_{|x|=\ell-m-1} {\bf 1}_{\{\V(x_i) +a \in [r_{i+m+1}, s_{i+m+1}], \forall \, i\le \ell-m-1, \V(x) +a \le \delta\}}\Big]
\\
&= \e\Big[e^{Y_{\ell-m-1}} {\bf 1}_{\{Y_i +a \in [r_{i+m+1}, s_{i+m+1}], \forall \, i\le \ell-m-1, Y_{\ell-m-1} +a \le \delta\}}\Big]
\\
&\le e^{\delta-a} \,e^{\Delta^\upsilon - d \frac{\ell- m-1}{\Delta^2}}, 
\end{align*}

\noindent where in the above inequality, we have used $Y_{\ell-m-1}\le \delta-a$ then applied Lemma \ref{l:upper}.  We obtain
\begin{align*}
\e\Big[\sum_{z_i\neq z_j} \widetilde{\Sigma}(z_i)\widetilde{\Sigma}(z_j) \mid {\mathcal F}_m \Big] &\le  e^{2\Delta^\upsilon - 2 d \frac{\ell- m-1}{\Delta^2}}e^{2\delta}\e\Big[\sum_{z_i\neq z_j} e^{- (\V(z_i)+\V(z_j))} \mid \V(z)\Big] \\
&= 
 e^{2\Delta^\upsilon - 2 d \frac{\ell- m-1}{\Delta^2}+2(\delta-\V(z))},  
\end{align*}

\noindent where in the last inequality, we have used the fact that $\e [\sum_{|u|=|u'|=1, u\neq u'} e^{- (\V(u)+\V(u')}]= 1$.   We end up with
\begin{align*}
\e[\sum_{m=0}^{\ell-1} 
\sum_{|z|=m} 
\sum_{z_i\neq z_j} \Sigma(z_i)\Sigma(z_j)] &\le e^{2\Delta^\upsilon+2 \delta} \sum_{m=0}^{\ell-1} e^{- 2 d \frac{\ell- m-1}{\Delta^2}}
\e[ \sum_{|z|=m}  e^{-2 \V(z)} {\bf 1}_{\{ \V(z_j) \in [r_j,s_j],\, \forall\,   j\in \lb 1, m\rb\}}]\\
&\le  e^{2\Delta^\upsilon+2  \delta}\sum_{m=0}^{\ell-1} e^{- 2 d \frac{\ell- m-1}{\Delta^2} -   r_m }\e[ \sum_{|z|=m}  e^{- \V(z)} {\bf 1}_{\{ \V(z_j) \in [r_j,s_j],\, \forall\,  j\in \lb 1, m\rb\}}].
\end{align*}

\noindent By \eqref{eq:many-to-one_V}, \[
\e[ \sum_{|z|=m} e^{- \V(z)} {\bf 1}_{\{ \V(z_j) \in [r_j,s_j],\, \forall\,  j\in \lb 1, m\rb\}}] =  \p(Y_j\in [r_j,s_j],\, \forall\,  j\in \lb 1, m\rb)\le e^{\Delta^\upsilon-d\frac{m}{\Delta^2}}.
\]

\noindent This gives the first term in the right-hand-side of \eqref{second_moment}. Similarly 
\[
\e[\sum_{|x|=\ell} F_x^2] = \e[\sum_{|x|=\ell} F_x] \le e^{ \delta +\Delta^\upsilon-\frac{d}{\Delta^2}\ell}, 
\]

\noindent which gives the second term in the right-hand-side of \eqref{second_moment} and  completes the proof.
\end{proof}

\begin{lemma}\label{l:barrier}  Let $c>0$. For $L\ge 1$, 
 let $r_j=  \frac{\pi^2}{2 c^2} \, L^{-2/3}j$ and $s_j = r_j + c \,  L^{1/3} $ for $j\ge 0$.  For any   $\upsilon>0 $, we have   \begin{equation}\label{eq:barrier}
\lim_{L\to\infty} L^{-1/3} \ln \p\Big(\exists |x|=L: \V(x_j)\in [r_j,s_j],\,\forall\, j\le L,\, \V(x)\le r_{L} + \upsilon L^{1/3}  \Big)= 0.
\end{equation}
\noindent Moreover,
\begin{equation}\label{bound_blue}
\lim_{L\to\infty} L^{-1/3} \ln \e\Big[\sum_{|x|\le L} {\bf 1}_{\{\V(x_j)\in [r_j,s_j],\,\forall\, j\le |x|\}}\Big] =c.
\end{equation}
\end{lemma}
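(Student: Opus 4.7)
My plan is to combine the Many-to-One lemma \eqref{eq:many-to-one_V} with the sharp random walk estimates of Section \ref{sub:rw}. The key observation is that setting $\Delta := c\, L^{1/3}$ makes the interval $[r_j,s_j]$ have width $\Delta$ and drift $\frac{\pi^2/2}{\Delta^2}$, matching the critical Mogulskii speed; Lemmas \ref{l:upper} and \ref{l:mogulskii_lower} then apply directly with their drift parameter taken to be $\pi^2/2$.

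For \eqref{bound_blue}, I apply Many-to-One to each generation $\ell \le L$:
\begin{equation*}
\e\Big[\sum_{|x|=\ell}{\bf 1}_{\{\V(x_j)\in[r_j,s_j],\,\forall\, j\le \ell\}}\Big] = \e\big[e^{Y_\ell}{\bf 1}_{\{Y_j\in[r_j,s_j],\,\forall\,j\le \ell\}}\big].
\end{equation*}
For the upper bound I would estimate $e^{Y_\ell}\le e^{s_\ell}$, apply Lemma \ref{l:upper} with any $d<\pi^2/2$ and a small auxiliary exponent (so that $\Delta^\upsilon = o(L^{1/3})$), and sum over $\ell$; the dominant contribution is at $\ell = L$ through $e^{s_L} = e^{r_L + \Delta}$ with $r_L = (\pi^2/2)L/\Delta^2$, and dividing by $L^{1/3}$ then letting $d \uparrow \pi^2/2$ yields $\limsup \le c$. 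For the matching lower bound I would retain only the term $\ell = L$, restrict $Y_L$ to $[s_L - \tilde\upsilon\Delta, s_L]$ using Lemma \ref{l:mogulskii_lower}, use $e^{Y_L} \ge e^{s_L - \tilde\upsilon\Delta}$, and send $\tilde\upsilon \to 0$, $d' \downarrow \pi^2/2$.

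For \eqref{eq:barrier}, the bound $\limsup L^{-1/3}\ln \p \le 0$ is trivial. For the matching lower bound I would apply the Paley--Zygmund inequality to
\begin{equation*}
Z := \sum_{|x|=L} {\bf 1}_{\{\V(x_j)\in[r_j,s_j],\,\forall\, j\le L,\; \V(x)\in [r_L, r_L + \tilde\upsilon L^{1/3}]\}}
\end{equation*}
for a small auxiliary $\tilde\upsilon \in (0,\upsilon)$ to be sent to $0$ at the end; since $\{Z > 0\}$ is contained in the event of \eqref{eq:barrier}, it suffices to show that $\liminf_L L^{-1/3}\ln \p(Z > 0) \to 0$ as $\tilde\upsilon \to 0$. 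By Many-to-One and Lemma \ref{l:mogulskii_lower}, $\e Z \ge e^{(\pi^2/2 - d')L/\Delta^2}$ which is $e^{-o(L^{1/3})}$ after $d' \downarrow \pi^2/2$. Plugging $\delta = r_L + \tilde\upsilon L^{1/3}$ into \eqref{second_moment} and choosing $d < \pi^2/2$ sufficiently close to $\pi^2/2$, the geometric sum $\sum_{m=0}^{L-1} e^{(d - \pi^2/2)m/\Delta^2}$ is at most $\Delta^2/(\pi^2/2 - d) = O(L^{2/3})$, whose logarithm is $O(\ln L) = o(L^{1/3})$; the remaining exponent reduces to $2\tilde\upsilon L^{1/3}(1+o(1))$. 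Paley--Zygmund then gives $\p(Z > 0) \ge e^{-2\tilde\upsilon L^{1/3}(1+o(1))}$, and sending $\tilde\upsilon \to 0$ concludes.

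The main technical point is the balancing in the second moment: if $d$ is too close to $\pi^2/2$, the geometric sum blows up, while if $d$ is too far, then $(\pi^2/2 - d)L/\Delta^2$ is no longer negligible on scale $L^{1/3}$. The key observation is that the sum is at most $O(L^{2/3})$ even near criticality, so its log is only $O(\ln L)$ and can be absorbed, allowing $d$ and $\tilde\upsilon$ to be optimized essentially independently.
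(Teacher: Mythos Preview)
Your proposal is correct and follows essentially the same route as the paper: Many-to-One plus Lemmas \ref{l:upper} and \ref{l:mogulskii_lower} with $\Delta=cL^{1/3}$ for \eqref{bound_blue}, and first/second moments with Paley--Zygmund and \eqref{second_moment} for \eqref{eq:barrier}. The only cosmetic difference is that you introduce an explicit auxiliary $\tilde\upsilon\in(0,\upsilon)$ to send to $0$, whereas the paper works directly with $\upsilon$ and then invokes the monotonicity of the event in $\upsilon$; the resulting bounds and the order of limits in $d,d',\tilde\upsilon$ are the same.
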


\begin{proof}  Let for $|x|=L$, 
\[
F_x:={\bf 1}_{\{ \V(x_j)\in [r_j,s_j]\,\forall\, j\le L,\, \V(x)\le r_{L} + \upsilon L^{1/3}  \}}.
\]

\noindent By \eqref{eq:many-to-one_V}, then applying \eqref{mogulskii_lower} with $\Delta=   c L^{1/3}$, we obtain that for any $d'>\frac{\pi^2}{2}$ and all large $L$,
\begin{align*}    
\e\Big[\sum_{|x|= L} F_x\Big]
& \ge e^{r_L} \p(Y_j\in [r_j,s_j],\, \forall\,j\le L, Y_L\le r_{L} +\upsilon L^{1/3})
\\
& \ge e^{r_L} e^{- \frac{d'}{ c^2} L^{1/3}} = e^{- \varepsilon' L^{1/3}},
\end{align*}

\noindent with $\varepsilon':=(d'-\frac{\pi^2}{2}) c^{-2}$. Similarly, by applying \eqref{mogulskii_upper}, we obtain that for any $d<\frac{\pi^2}{2}$,
\[
\e\Big[\sum_{|x|=L} F_x\Big] \le  e^{(  \upsilon+ \varepsilon) L^{1/3}}
\]
where $\varepsilon:=2 (\frac{\pi^2}{2}-d)c^{-2}$, and the factor $2$ in $\varepsilon$ is included to cancel the term $\Delta^\upsilon$ in \eqref{mogulskii_upper}. 

We compute now the second moment of $\sum_{|x|=L} F_x$ using \eqref{second_moment}. With  $\Delta= c L^{1/3}$, we apply \eqref{second_moment} with $\delta= r_L+ \upsilon L^{1/3}$, and $\ell=L$. We check that for $m\le L$,
\[
\frac{d}{\Delta^2}m - r_m = -\frac{\varepsilon}{2}  \frac{m}{L^{2/3}}, \qquad \delta -\frac{d}{\Delta^2}L= (\frac\varepsilon2+\upsilon) L^{1/3}.
\]

\noindent Then for all large $L$, 
\[
\e\Big[(\sum_{|x|=L} F_x)^2\Big]\le e^{ (2 \upsilon+ 2 \varepsilon) L^{1/3}}.
\]

\noindent We deduce from the Paley-Zygmund inequality that
\[
\p\Big(\sum_{|x|= L} F_x \ge 1\Big) \ge \frac{\e[\sum_{|x|=L} F_x]^2}{\e[(\sum_{|x|=L} F_x)^2]}\ge e^{ -2 ( \upsilon+  \varepsilon+ \varepsilon') L^{1/3}}.
\]

\noindent We recall that $\varepsilon$ and $\varepsilon'$ can be arbitrary small. Moreover, the probability on the left-hand side in \eqref{eq:barrier} is decreasing when $\upsilon\to 0$. We deduce \eqref{eq:barrier}.

 Let us prove \eqref{bound_blue}. Let  $\ell\le L$. By \eqref{eq:many-to-one_V} and \eqref{mogulskii_upper}, we obtain that   for any $d\in (0, \frac{\pi^2}2)$ and $\upsilon\in (0, 1)$, 
\begin{align*}     
\e\Big[\sum_{|x|=\ell} {\bf 1}_{\{\V(x_j)\in [r_j,s_j], \, \forall\, j\le \ell\}}\Big]
&\le e^{ s_\ell}\p(Y_j\in [r_j,s_j], \, \forall\, j\le \ell)
\\
& \le e^{ s_\ell} e^{ -\frac{d}{c^2 L^{2/3}} \ell  + L^{\upsilon/3}}
= e^{\frac\varepsilon2 \frac{\ell}{L^{2/3}} + L^{\upsilon/3}+ c L^{1/3}}.
\end{align*}

\noindent Take the sum over $\ell$ gives that $$ \e\Big[\sum_{|x|\le L} {\bf 1}_{\{\V(x_j)\in [r_j,s_j], \, \forall\, j\le |u|\}}\Big]
\le
L\,  e^{\frac\varepsilon2 L^{1/3} + L^{\upsilon/3}+ c L^{1/3}} .$$

\noindent Since $\varepsilon$ can be taken arbitrarily close to $0$, we deduce the upper bound of \eqref{bound_blue}.

The lower bound of \eqref{bound_blue} is proved similarly, using 
\begin{align*}
& \e\Big[\sum_{|x|=L} {\bf 1}_{\{\V(x_j)\in [r_j,s_j],\, \forall\, j\le L,\, \V(x) \in [s_L-\upsilon  L^{1/3}, s_L]\}}\Big]\\ 
&\ge e^{(s_L-\upsilon L^{1/3})}\p(Y_j\in [r_j,s_j],\, \forall\, j\le L,\,Y_\ell \in [s_L-\upsilon  L^{1/3} ,s_L] )
\end{align*}
and then Lemma \ref{l:mogulskii_lower} with $\Delta= c L^{1/3}$,
\[
\p(Y_j\in [r_j,s_j],\, \forall\, j\le L,\, Y_L \in [s_L-\upsilon  L^{1/3},s_L])
\ge e^{-\frac{d'}{c^2} L^{1/3} }.
\]
Letting $d'\to \frac{\pi^2}{2}$ and $\upsilon\to 0$ gives the lower bound.
\end{proof}

\begin{lemma}\label{l:beginning}
Let $b>\upsilon>0$, $r:=-b\ln n$.  We have  
\begin{equation}   \label{atleast1-begining} 
\lim_{n\to\infty} \frac{1}{\ln n} \ln \p\Big(\exists |x|= \lfloor \ln^{5/2} n\rfloor:  \V(x_j) \in [r,0],\, \forall\, j\le |x|,\, \V(x) \le r+\upsilon \ln n\Big)=\upsilon-b.
\end{equation}
Moreover, 
\begin{equation}\label{bound_beginning}
\e\Big[\sum_{\ell=1}^\infty \sum_{|x|=\ell} {\bf 1}_{\{\V(x_j)\in [r,0],\, \forall\, j\le \ell\}}\Big] = O(\ln n)
\end{equation}
and \begin{equation}\label{bound_beginning2}
   \e\Big[\sum_{|x|=\lfloor \ln^{5/2}n\rfloor} {\bf 1}_{\{\V(x_j)\in [r,0],\, \forall\, j\le |x|,\, \V(x)\le r+\upsilon \ln n\}}\Big]= n^{-b+\upsilon+o(1)}. 
\end{equation}
\end{lemma}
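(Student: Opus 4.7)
The plan is to establish the three assertions in turn, closely mirroring Lemma~\ref{l:barrier}. Throughout I set $\Delta=-r=b\ln n$ and $\ell=\lfloor\ln^{5/2}n\rfloor$, so that $\ell/\Delta^2=\ln^{1/2}n/b^2\to\infty$ (we are in the Mogulskii regime) while $d\ell/\Delta^2=o(\ln n)$; hence the exponential factors in Lemmas~\ref{l:upper}--\ref{l:mogulskii_lower} will only contribute $n^{o(1)}$. The many-to-one identity \eqref{eq:many-to-one_V} reduces all computations to random-walk estimates for $Y$.

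First I would prove \eqref{bound_beginning2}. By many-to-one the expectation equals $\e[e^{Y_\ell}\mathbf 1_A]$ with $A=\{Y_j\in[r,0]\ \forall j\le\ell,\ Y_\ell\le r+\upsilon\ln n\}$. For the upper bound, $e^{Y_\ell}\le e^{r+\upsilon\ln n}$ on $A$ combined with Lemma~\ref{l:upper} (taking $c=0$, $\Delta=b\ln n$) gives $n^{-b+\upsilon+o(1)}$, which, via Markov, also yields the bound $\limsup\ln\p(\cdot)/\ln n\le\upsilon-b$ in \eqref{atleast1-begining}. For the matching lower bound on the expectation I would restrict to $Y_\ell\in[r+\upsilon\ln n-1,r+\upsilon\ln n]$, so that $e^{Y_\ell}\ge e^{r+\upsilon\ln n-1}$, and apply Lemma~\ref{l:mogulskii_lower} inside the \emph{sub-strip} $[r,r+\upsilon\ln n]$ of width $\upsilon\ln n$, with endpoint constrained to the top slice, producing probability $n^{-o(1)}$.

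Second, for \eqref{bound_beginning}, many-to-one transforms the total expectation into $\sum_{\ell\ge 1}\e[e^{Y_\ell}\mathbf 1_{\{Y_j\in[r,0]\,\forall j\le\ell\}}]$. Since $Y_\ell\le 0$ on this event, this is bounded by $\e_0[\tau]$, where $\tau$ is the exit time of $Y$ from $[r,0]$. Optional stopping on the martingale $Y_j^2-j$ yields $\e_0[\tau]=\e[Y_\tau^2]$. The memoryless property of $\mathrm{Exp}(1)$ shows that on an upward exit the overshoot $Y_\tau$ is itself $\mathrm{Exp}(1)$, contributing $O(1)$; on a downward exit $Y_\tau\in[r-1,r)$, so $Y_\tau^2\le(|r|+1)^2$, while $\e[Y_\tau]=0$ together with $\e[Y_\tau\mathbf 1_{\mathrm{up}}]=\p(\mathrm{up})$ forces $\p(\mathrm{down})\le 1/(1+|r|)$, so the down-contribution is $O(|r|)=O(\ln n)$.

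The hardest part is the lower bound in \eqref{atleast1-begining}: applying Paley--Zygmund directly to $W=\sum_{|x|=\ell}\mathbf 1_{F_x}$ with \eqref{second_moment} only gives $\p\ge n^{-b+o(1)}$, losing a factor $n^\upsilon$, because $\sum_m e^{dm/\Delta^2-r_m}=n^{b+o(1)}$ when $r_m=r$. To recover the missing $n^\upsilon$, I would apply Paley--Zygmund to the \emph{truncated} count
\[
\widetilde W=\sum_{|x|=\ell}\mathbf 1_{\{\V(x_j)\in[r+\upsilon\ln n,0],\,\forall j<\ell,\ \V(x_\ell)\in[r,r+\upsilon\ln n]\}},
\]
which forces every ancestor to lie in the narrower strip of width $\widetilde\Delta=(b-\upsilon)\ln n$, leaving only the final step to dip below $r+\upsilon\ln n$. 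The first moment is still $\e[\widetilde W]=n^{-b+\upsilon+o(1)}$: Lemma~\ref{l:mogulskii_lower} in the narrower strip places $Y_{\ell-1}$ in its bottom slice $[r+\upsilon\ln n,r+\upsilon\ln n+\tfrac12]$ with probability $n^{-o(1)}$, and the step $\mathrm{Exp}(1)-1$ then lands in the interval making $Y_\ell\in[r+\upsilon\ln n-1,r+\upsilon\ln n]$ with probability bounded away from $0$ (by a one-line computation with the exponential density). For the second moment I would re-run the calculation underlying \eqref{second_moment} with the narrower strip; the crucial change is that the factor $e^{-r_m}=n^b$ becomes $e^{-(r+\upsilon\ln n)}=n^{b-\upsilon}$, yielding $\e[\widetilde W^2]=n^{-b+\upsilon+o(1)}$, which matches the first moment. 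Paley--Zygmund then produces $\p(\widetilde W\ge 1)\ge n^{-b+\upsilon-o(1)}$, concluding \eqref{atleast1-begining}.
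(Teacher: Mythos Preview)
Your overall strategy matches the paper's: many-to-one plus Mogul'skii for the moments, exit-time bound for \eqref{bound_beginning}, and Paley--Zygmund on a count restricted to a narrower strip to recover the factor $n^{\upsilon}$ in \eqref{atleast1-begining}. The diagnosis that the naive Paley--Zygmund loses $n^{\upsilon}$ because $e^{-r_m}=n^b$ is exactly right, and narrowing the strip is the correct cure.

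There are, however, two genuine technical gaps in the execution. First, for the lower bound in \eqref{bound_beginning2} you apply Lemma~\ref{l:mogulskii_lower} ``inside the sub-strip $[r,r+\upsilon\ln n]$''. But $r+\upsilon\ln n=(-b+\upsilon)\ln n<0$, so this strip does not contain the starting point $0$, and the lemma (which requires $r_0\le 0\le r_0+\Delta$) is inapplicable. Second, in the lower bound for \eqref{atleast1-begining} you need $Y_{\ell-1}$ in the slice $[r+\upsilon\ln n,\,r+\upsilon\ln n+\tfrac12]$ so that a single step of law $\mathrm{Exp}(1)-1\ge -1$ can land in $[r,r+\upsilon\ln n]$. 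Lemma~\ref{l:mogulskii_lower} only places $Y_{\ell-1}$ in a slice of width $\upsilon_0\Delta\asymp\ln n$, not $O(1)$; for $Y_{\ell-1}$ farther than $1$ above $r+\upsilon\ln n$ the final step cannot cross down, so your last-step probability is not bounded away from $0$.

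Both issues evaporate with the paper's simpler device: pick $\upsilon'<\upsilon$ close to $\upsilon$, set $r'=r+\upsilon'\ln n$, and restrict \emph{all} of $\V(x_j)$, including $j=\ell$, to the strip $[r',0]$ (which contains $0$), with the endpoint in its bottom slice $[r',\,r'+\upsilon_0(b-\upsilon')\ln n]$. Choosing $\upsilon_0$ small enough forces $\V(x)\le r+\upsilon\ln n$, and $e^{Y_\ell}\ge e^{r'}=n^{-b+\upsilon'}$. Lemma~\ref{l:mogulskii_lower} now applies verbatim, the first moment is $\ge n^{-b+\upsilon'+o(1)}$, the second moment via \eqref{second_moment} (with $r_m=r'$, hence $e^{-r_m}=n^{b-\upsilon'}$) is $\le n^{-b+\upsilon'+o(1)}$, and Paley--Zygmund gives $\p\ge n^{-b+\upsilon'+o(1)}$. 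Letting $\upsilon'\uparrow\upsilon$ yields the claim without any last-step surgery.
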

\begin{proof}
Equation \eqref{bound_beginning2} is a consequence of \eqref{eq:many-to-one_V}. The upper bound comes from \eqref{mogulskii_upper}, noting that $e^{Y_{\lfloor \ln^{5/2} n\rfloor}} \le n^{-b+\upsilon}$. For the lower bound, we  restrict to $\V(x)\ge  r':=r+\upsilon' \ln n$ for some $\upsilon'<\upsilon$ close to $\upsilon$ so that $e^{Y_{\lfloor \ln^{5/2} n\rfloor}} \ge n^{-b+\upsilon'}$ and use  \eqref{mogulskii_lower}.  Equation  \eqref{bound_beginning2} gives the upper bound of \eqref{atleast1-begining}. For the lower bound, we restrict to  $\V(x_j) \in [r',0]$. We then use \eqref{second_moment} as in the proof of the previous lemma and the Paley--Zygmund inequality. We omit the details.
 Finally, for \eqref{bound_beginning}, we write $\e[\sum_{|x|=\ell} {\bf 1}_{\{\V(x_j)\in [r,0],\, \forall\, j\le \ell\}} ]\le \p(Y_j \in [r,0],\, \forall\, j\le \ell)$ whose sum over $\ell$ is bounded by  the expectation of the first exit time of $[r, 0]$ by the centered random walk $Y$ starting from $0$, and  is therefore  $O(|r|)$.  
\end{proof}

 Recall that $\V^N$ denotes the branching random walk $\V$ in which we select the $N$ leftmost particles of the population at each step. The following coupling is a variant of \cite[Lemma 1]{berard_gouere}.
We say that a branching random walk is killed   if at each generation, the particles produce offspring as in $\V$, which are then selected according to a rule which is adapted to the natural filtration generated by $\V$.  

\begin{lemma}\label{l:kil}   Let $\V^{\rm kil}$ be the branching random walk $\V$ killed according to a given rule. Assume that    at time $0$, the population of $\V^{\rm kil}$ is included in (or equal to) that of $\V^N$. Then there exists a coupling of   $\V^N$ and $\V^{\rm kil}$ such that $$\min_{|u|=k} \V^N(u) \le \min_{|u|=k} \V^{\rm kil}(u), \qquad k\le T,$$ where  $T:= \inf\{k\ge 0: \#\V_k^{\textrm{kil}}> N\}.$  \end{lemma}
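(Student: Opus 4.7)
The plan is to construct the coupling inductively, maintaining at each generation $k<T$ an injection $\phi_k:\V^N_k \to$ — wait, the other way — $\phi_k : \V^{\rm kil}_k \to \V^N_k$ enjoying the position-domination property $\V^N(\phi_k(u)) \le \V^{\rm kil}(u)$ for every $u \in \V^{\rm kil}_k$. Such an injection is strictly stronger than the desired minimum comparison, and the boundary case $k = T$, where the injection is forced to fail by pigeonhole since $\#\V^{\rm kil}_T > N$, will be handled separately but by the same mechanism. The base case $k=0$ is immediate from the hypothesis $\V^{\rm kil}_0 \subseteq \V^N_0$: take $\phi_0$ to be the inclusion, which is even position-preserving.

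For the induction step at $k<T$, I would couple the reproductions by letting each $u \in \V^{\rm kil}_k$ and its image $\phi_k(u) \in \V^N_k$ use the same copy of the offspring displacement point process \eqref{eq:BRW-V}, while every particle of $\V^N_k$ outside the image of $\phi_k$ uses an independent copy. This produces, on the full pre-selection generation-$(k+1)$ populations $C^{\rm kil}$ and $C^N$, a natural injection $\tilde\phi : C^{\rm kil} \to C^N$ sending each child of $u$ to the corresponding child of $\phi_k(u)$; since the relative displacements coincide under the coupling,
\[
\V^N(\tilde\phi(w)) - \V^{\rm kil}(w) = \V^N(\phi_k(u)) - \V^{\rm kil}(u) \le 0,
\]
where $u$ is the parent of $w \in C^{\rm kil}$.

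It then remains to reinstate the injection after the two selection rules are applied (arbitrary killing for $\V^{\rm kil}_{k+1}$, the $N$ leftmost of $C^N$ for $\V^N_{k+1}$), and this is the one genuinely substantive step. Sort $\V^{\rm kil}_{k+1} = \{u_1,\ldots,u_m\}$ in non-decreasing $\V^{\rm kil}$-value and $\V^N_{k+1} = \{v_1,\ldots,v_N\}$ in non-decreasing $\V^N$-value. For every $j \le \min(m,N)$ the particles $\tilde\phi(u_1),\ldots,\tilde\phi(u_j)$ form a size-$j$ subset of $C^N$ whose $\V^N$-values are all at most $\V^{\rm kil}(u_j)$; since $v_j$ is the $j$-th smallest element of $C^N$, it is bounded above by the maximum of any size-$j$ subset of $C^N$, whence $\V^N(v_j) \le \V^{\rm kil}(u_j)$. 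If $k+1<T$, then $m \le N$ and $\phi_{k+1}(u_j) := v_j$ closes the induction. If $k+1=T$, the injection fails because $m > N$, but the $j=1$ instance of the same inequality still yields $\min_{|u|=T} \V^N(u) \le \V^N(v_1) \le \V^{\rm kil}(u_1) = \min_{|u|=T} \V^{\rm kil}(u)$, which is precisely the statement of the lemma at the boundary time.
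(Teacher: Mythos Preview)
Your proof is correct and follows essentially the same approach as the paper: both argue by induction, maintaining a sorted pointwise domination between the two populations (the paper phrases this as a partial order $\mu\succ\nu$ on counting measures, you phrase it as a position-dominating injection $\phi_k$; the two are equivalent), and both realise the coupling by sharing offspring point processes between paired particles. Your treatment is slightly more explicit in two places: you spell out the pigeonhole argument showing $\V^N(v_j)\le\V^{\rm kil}(u_j)$ after selection, and you handle the boundary case $k=T$ directly, whereas the paper only verifies the domination for $k<T$.
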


We use the convention $\min_\emptyset:=\infty$ and note that the way in which $\V^{\rm kil}$ is killed is irrelevant for the above lemma.

\begin{proof}   For two finite counting measures $\mu= \sum_{i=1}^{a(\mu)} \delta_{\{x_i\}}$ and $\nu= \sum_{j=1}^{a(\nu)}\delta_{\{y_j\}}$ on $\r$, we assume that $x_1\le x_2 \le  \cdots\le  x_{a(\mu)}$ and $y_1\le y_2\le \cdots \le y_{a(\nu)}$.  We write $\mu \succ \nu$ if $a(\mu)\le a(\nu)$ and $x_i \ge y_i $ for all $i \in \lb 1, a(\mu)\rb$. In the degenerate case $a(\mu)=0$, we always have $\mu \succ \nu$ for any $\nu$. For any $k\ge 0$, we identify  $\V_k^{\rm kil}$ and $\V_k^N$ 
with their associated finite counting measures. It suffices  to construct a coupling  of   $\V^N$ and $\V^{\rm kil}$ such that \begin{equation}    \V_k^{\rm kil} \succ \V^N_k, \qquad \forall\, 0\le k< T. \label{compar-kil-N}\end{equation}

By assumption, \eqref{compar-kil-N} holds for $k=0$.  Assume the coupling has been built up to some time $k< T$. We may write $\V_k^{\rm kil}= \sum_{i=1}^{a_k} \delta_{\{x_i\}}$ and $\V^N_k= \sum_{j=1}^{b_k}\delta_{\{y_j\}}$ such that $a_k \le b_k$ and $x_i \le y_i$ for all $i\in \lb 1, a_k\rb$.  If $a_k=0$,   $T=\infty$ and \eqref{compar-kil-N} holds for all time after $k$ by definition. Consider the non-degenerate case $a_k \ge 1$. For $1\le i \le a_k$, each $x_i$ particle  of $\V^{\rm kil}_k$  produces  a collection of offspring $x_i+ \Xi_{i, k}$ at time $k+1$ in $\V$, where $(\Xi_{i, k})_{i, k\ge 1}$ are i.i.d. copies of $
\sum_{i=1}^\infty {\delta}_{\{e T_i -1\}} $ and for any $z\in \r$, $z+ \Xi_{i, k}$ is the counting measure $\Xi_{i, k}$ shifted by $z$.  Those offspring that are non-killed (if the set is not empty)  constitute $\V_{k+1}^{\rm kil}$. For $\V^N$, each $y_i$, $1\le i\le  b_k$,  produces  a collection of offspring $y_i+ \Xi_{i, k}$ at time $k+1$, and we select the $N$-leftmost particles to form $\V^N_{k+1}$ (in particular $\#\V^N_{k+1}=N$).  If the population of $\V_{k+1}^{\rm kil}$ does not exceed $N$, then necessarily $\V_{k+1}^{\rm kil} \succ \V^N_{k+1}$, and \eqref{compar-kil-N} holds at the time $k+1$. Otherwise, $T=k+1$, and  \eqref{compar-kil-N} holds as well. 
\end{proof}

Now we are ready to prove Proposition \ref{p:first_moment}.

\begin{proof}[Proof of Proposition \ref{p:first_moment}]  Fix $a\in [0, 1]$ and $t>0$. Let $\ell_N$ be such that $\frac{\ell_N}{\ln^3 N} \to t$.  Proposition \ref{p:first_moment} follows if we can show  that   \begin{align}    
   & \lim_{N\to\infty} \frac{1}{\ln N} \e[ \min_{|u|=\ell_N  } \V^N(u) ]  = -a+\frac{\pi^2}{2}t, \label{NBrw-exp}
    \\
    & \lim_{N\to\infty} \frac{1}{(\ln N)^2} \mbox{\rm Var}\Big(\min_{|u|=\ell_N} \V^N(u)\Big)   =0. \label{NBrw-var}
\end{align}  

It is enough to show that for any $\zeta < -a+\frac{\pi^2}{2}t< \zeta'$, there exists some $\varepsilon>0$ such that for all large $N$, \begin{align}   \p\Big( \min_{|u|=\ell_N  } \V^N(u) \ge  \zeta' \ln N\Big) & \le N^{-\varepsilon}, \label{VN-up}
\\
    \p\Big( \min_{|u|=\ell_N  } \V^N(u)   \le  \zeta  \ln N\Big) & \le N^{-\varepsilon}. \label{VN-low}\end{align}

Indeed,  observe that $\min_{|u|=\ell_N} \V^N(u)$ is stochastically smaller than $\V^1$ at time $\ell_N$, where $\V^1$ is the branching random walk $\V$  with selection, in which only the leftmost particle is kept at each step. Therefore $\V^1$ is a random walk with step distribution $e \mbox{Exp}(1)-1$, and thus $\e[(V^1_{\ell_N})^4] = O(\ell_N^4)$. Since $\min_{|u|=\ell_N} \V^N(u)> - \ell_N$, we get that \begin{equation}   \e[(\min_{|u|=\ell_N} \V^N(u))^4] \le  \ell_N^4 + \e[(V^1_{\ell_N})^4] = O(\ell_N^4). \label{moment4}  \end{equation}

 Let $\H_N:= \{\zeta \ln N \le \min_{|u|=\ell_N  } \V^N(u) \le \zeta' \ln N\}$.  By \eqref{VN-up} and \eqref{VN-low}, the Cauchy--Schwarz inequality implies that 
\begin{equation*}     
 \lim_{N\to\infty}  \e[ (\min_{|u|=\ell_N} \V^N(u))^2 {\bf 1}_{\H_N^c}] = 0,
\end{equation*}

\noindent which, in view of the fact that on $\H_N$, $\zeta \ln N \le \min_{|u|=\ell_N  } \V^N(u) \le \zeta' \ln N$, yields \eqref{NBrw-exp} and \eqref{NBrw-var}.  

 In what follows, we prove \eqref{VN-up} and \eqref{VN-low} in two separate parts. 
\medskip

 {\it (i) Proof of \eqref{VN-up}.}     We treat at first the case $a\in (0, 1)$.  By Lemma \ref{l:kil}, we will construct a killed branching random walk $\V^{\rm kil}$ which, w.h.p., has its minimum  below $\zeta' \ln N$ at time $\ell_N$ and $T> \ell_N$, where $T$ denotes the first time that the size of  $\V^{\rm kil}$  exceeds $N$.

 Choose $c\in (0, t^{-1/3})$,   $b\in (0,a)$,  and $\upsilon>0$ such that $$c \,t^{1/3}+a-b+\upsilon<1, \qquad -b + \frac{\pi^2}{2c^2}t^{1/3} + 2 \upsilon (1+t^{1/3}) < \zeta',$$
 
 \noindent where the second inequality is possible because  $-b + \frac{\pi^2}{2c^2}t^{1/3} + 2 \upsilon (1+t^{1/3}) \to - a+ \frac{\pi^2}{2} t< \zeta' $ as $b \uparrow a, \upsilon\downarrow 0$ and $c \uparrow t^{-1/3}$.

   From time $0$ to $j_0:=\lfloor \ln^{5/2} N\rfloor$, one kills particles outside $[-b\ln N,0]$. Let $L:= \ell_N - j_0$.  We consider alive particles at time $j_0$ at position smaller than $-b\ln N + \upsilon \ln N$. For each such particle, say at position $y\in [-b\ln N,(-b+\upsilon)\ln N]$, one looks at their descendants at generation $\ell_N$ killed outside the interval $y+[r_j,r_j + c L^{1/3}]$  for any  time $j_0\le j\le \ell_N$ where $r_j:=\frac{\pi^2}{2c^2} L^{-2/3}(j-j_0)$. 
It defines the killed branching random walk $\V^{\rm kil}$. By Lemma \ref{l:kil}, we have
\begin{equation}\label{eq:coupling}
  \min_{|u|=\ell_N} \V^N(u)   \le   \min_{|u|=\ell_N} \V^{\rm kil}(u), \qquad \mbox{on $\{T> \ell_N\}$}.   
  \end{equation}
 
 We start with $N^a$ particles at time $0$ for {\it both $\V^N$ and $\V^{\rm kil}$.} 
 By \eqref{bound_beginning}, the expected number of particles alive till time $j_0$ is smaller than $O(N^a \ln N)= N^{a+o(1)}$, where the factor $N^a$ corresponds to the number of particles at time $0$.     Applying \eqref{bound_beginning2}   and \eqref{bound_blue}, the expected number of particles alive between time $j_0$ and $\ell_N$ is less than $N^a \times N^{-b+\upsilon+o(1)}\times e^{ (c+o(1)) L^{1/3}}= N^{a-b+\upsilon + c t^{1/3}+o(1)}$. Let $\eta:= \max(a, a-b+\upsilon+c \,t^{1/3})$. Recall that $\eta<1$. We have proved that the expected number of particles alive up to $\ell_N$ is less than $N^{\eta + o(1)}$. Then we deduce from the Markov inequality that \begin{equation}    \p(T\le \ell_N)\le N^{-(1-\eta)+o(1)}. \label{probaT<ellN} \end{equation}

 Let $\delta \in (0,\frac{a-b+\upsilon}2)$.  Recall that we start with $N^a$ particles at time $0$. By \eqref{atleast1-begining} (see Lemma \ref{l:beginning}), the number of particles staying in $[-b\ln N,0]$ ending below $-b\ln N+\upsilon \ln N$ at time $ j_0$ is stochastically larger than a binomial random variable with parameters $N^a$ and the success probability $N^{-(b-\upsilon)+o(1)}$. Then,  for some $\varepsilon>0$, with probability at least $1-e^{-N^{\varepsilon+o(1)}}$, there are at least $N^\delta$ particles staying in $[-b\ln N,0]$ ending below $-b\ln N+\upsilon \ln N$ at time $ j_0$.  

By the branching property and \eqref{eq:barrier}, conditioning on there being at least $N^\delta$ particles in $[-b \ln N, - (b-\upsilon) \ln N]$ at time $j_0$,   we deduce that with probability greater than $1-(1-e^{-o(L^{1/3})})^{N^\delta}\ge 1-e^{-N^{\delta+o(1)}}$, there will be an alive particle $u$ at time $\ell_N$ with position smaller than $y+\frac{\pi^2}{2c^2} L^{1/3} + \upsilon L^{1/3}$ where $y$ is the position of the ancestor of $u$ at time $j_0$, hence $y\le -b\ln N+\upsilon\ln N$. Therefore for all large $N$, $$\V^{\rm kil}(u)\le -(b-\upsilon)\ln N +\frac{\pi^2}{2c^2} (\ell_N)^{1/3}+ \upsilon (\ell_N)^{1/3}\le [-b + \frac{\pi^2}{2c^2}t^{1/3} + 2 \upsilon (1+t^{1/3})]  \ln N< \zeta' \ln N.$$

 Let $\varepsilon':=\min(\delta,\varepsilon)$. Then we have proved that there exists an event $E_N$ with $\p(E_N)\ge 1- e^{-N^{\varepsilon'+o(1)}}$ such that
$$
  \min_{|u|=\ell_N} \V^{\rm kil}(u) < \zeta' \ln N, \qquad \mbox{on  $E_N$}.$$
 
 This and \eqref{probaT<ellN} yield \eqref{VN-up} for the case $a \in (0, 1).$

The case $a=1$ in \eqref{VN-up} follows from the case $a<1$ since starting with fewer particles stochastically increases the minimum of $\V^N$.  

For the case $a=0$ in \eqref{VN-up}, we consider the binary branching random walk starting with one particle with offspring law given by  $(eT_1 -1, e T_2-1)$, i.e. the two leftmost children in $\V$.   Let $\Upsilon_k$ be the the maximum of the binary branching random walk at time $k\ge 1$. By the union bound,  for any $c >0$, $\p (\Upsilon_k \ge c k   ) $ is at most $2^k$ times the probability that the sum of $k$ i.i.d. copies of $e T_2- 1$ exceeds  $c k$.  Since $e T_2- 1$ has some exponential moments,   standard large deviation estimates imply that we can choose and then fix $c$ sufficiently large   so that this latter probability is at most $2^{-k} e^{- 2 k}$ for all large $k$. Hence 
  $$\p (\Upsilon_k \ge c k ) \le e^{- 2 k}.$$

Let $\zeta'> \frac{\pi^2}{2}t$ and  choose $\delta,\delta'\in (0,1)$ such that $\delta < (\zeta' - \frac{\pi^2}{2}t)/c$ and $e^{\delta'}<2^{\delta}$. Then for all large $N$, \begin{equation}    \p\Big(\Upsilon_{\lfloor \delta \ln N\rfloor} \ge c \delta \ln N\Big) \le N^{-\delta}. \label{Upsilon-max} \end{equation}

 At time $\lfloor \delta \ln N\rfloor$, there are more than $N^{\delta'}$ particles in the population of the above binary branching random walk.   Then $\min_{|u|=\ell_N} \V^N(u)$ is stochastically smaller than $\Upsilon_{\lfloor \delta \ln N\rfloor}+ {\mathcal M}_N$, where $ {\mathcal M}_N$ denotes  the minimum of $\V^N$ at time $\ell_N-\lfloor \delta \ln N\rfloor$ starting with $N^{\delta'}$ particles.  Applying the already proved \eqref{VN-up} with $a=\delta'$,  we obtain some $\varepsilon>0$ such that  $$\p\Big( {\mathcal M}_N > \frac{\pi^2}{2}t \ln N \Big) \le N^{-\varepsilon}.$$
 
 \noindent By the triangular inequality, this bound together with \eqref{Upsilon-max} imply \eqref{VN-up} in the case $a=0$. 
  
\medskip
{\it (ii) Proof of \eqref{VN-low}.}  Let   $b >1$ in this part.  First,  we show that w.h.p., no drop  larger than $ b \ln N$ occurs in $\V^N$ up to time $\ell_N$.    More precisely, let $F_N$ be the event that there exists some particle $v$ in $\V^N$ and an ancestor $u$ of $v$  such that $\V^N(u) - \V^N (v) > b \ln N$. Here $\V^N$ is obtained by selecting  particles in the $\lfloor N^a\rfloor$ independent copies of branching random walks starting at $0$; the genealogy of the selected particle $v$ is understood with respect to its underlying branching random walk.

By taking into account  all possible choices of $u$, at most $N$ at each time, we  deduce from the union bound and the branching property at $u$ that $$\p(F_N) \le  N\, \sum_{j=0}^{\ell_N-1} \p\Big(\exists v: |v|\le \ell_N-j, \V(v_i)\ge - b \ln N, \forall\, i < |v|, \V(v) < - b \ln N\Big),$$

\noindent where in the probability term on the right-hand side of the above inequality, $\V$ starts with $1$ particle located at $0$. For any $j \le \ell_N-1$, by summing over $|v|$ then applying \eqref{eq:many-to-one_V},   each probability term is  bounded by \begin{align*}  &    \sum_{k=0}^{\ell_N-j} \e \Big[\sum_{|v|=k} {\bf 1}_{\{\V(v_i)\ge - b \ln N, \forall\, i < k, \,\V(v) < - b \ln N\}}\Big]
\\
& \le N^{-b} \sum_{k=0}^{\ell_N-j}\p\Big(Y_i \ge - b \ln N, \forall\, i < k, \,  Y_k< - b \ln N\Big)
\le N^{-b}.
 \end{align*}

 Hence we have  \begin{equation} \label{eq:drop}    \p(F_N) \le N^{1-b} \ell_N = N^{-(b-1)+o(1)}.  \end{equation}

 Let $\zeta < -a+\frac{\pi^2}{2}t$.  We have \begin{align*}   & \p\Big(\min_{|u|=\ell_N} \V^N(u) < \zeta \ln N, F_n^c\Big)
 \\
 &\le N^a \, \e\Big[ \sum_{|u|=\ell_N} {\bf 1}_{\{ \V(u) < \zeta\ln N,\,  \min_{0\le i \le j\le \ell_N} (\V(u_j) - \V(u_i)) \ge - b \ln N\}}\Big]
 \\
 &\le  N^a\, N^{\zeta}\, \p\Big( Y_{\ell_N} < \zeta\ln N, \min_{0\le i \le j\le \ell_N} (Y_j - Y_i) \ge - b \ln N\Big),
 \end{align*}
 
 \noindent where in the first inequality $\V$ denotes again the branching random walk started with $1$ particle located at $0$, and we have applied  \eqref{eq:many-to-one_V} to obtain the second inequality. By Lemma \ref{main:upper}, for any $d \in (0, \frac{\pi^2}{2})$ and large $N$, $$ \p\Big( Y_{\ell_N} < \zeta \ln N, \min_{0\le i \le j\le \ell_N} (Y_j - Y_i) \ge - b \ln N\Big) \le e^{- d \frac{\ell_N}{(b\ln N)^2}}= N^{- \frac{d t}{b^2} + o(1)}. $$
 
 \noindent Hence $$ \p\Big(\min_{|u|=\ell_N} \V^N(u) < \zeta \ln N, F_n^c\Big) \le N^{ a+\zeta - \frac{d t}{b^2} + o(1)}.$$
 
 For a given $\zeta < -a+\frac{\pi^2}{2}t$,   choose $b>1$ sufficiently close to $1$ and $d$ sufficiently close to $\frac{\pi^2}{2}$ so that $a+\zeta - \frac{d t}{b^2} <0$. This, together with \eqref{eq:drop},  yields   \eqref{VN-low} and completes the proof of Proposition \ref{p:first_moment}.
       \end{proof}

We conclude this section with a consequence of Proposition \ref{p:first_moment}, which was used in the proof of Lemma \ref{lem:low-T2}.

\begin{corollary}  \label{coro:concentration}  Let  $(\Theta_i)_{i\ge 1}$ be a sequence of independent copies of $ \min_{|u|=\lfloor \ln^3 N\rfloor-1} \V^N(u)$, where $\V^N$ starts with $1$ particle located at $0$. Fix $c>1$. For any $\delta>0$,   $$\sup_{m\ge 1} \p\Big( \cup_{k=m}^{\lceil cm \rceil} \{| \sum_{i=1}^k (\Theta_i- \e(\Theta_i)) | \ge \delta k \, \e(\Theta_1)\}\Big) \to 0, \qquad N\to\infty.$$
\end{corollary}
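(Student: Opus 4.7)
The plan is to apply Doob's $L^2$ maximal inequality to the centered martingale $M_k := \sum_{i=1}^k (\Theta_i - \e(\Theta_i))$, leveraging the $L^2$-convergence provided by Proposition \ref{p:first_moment}. Applying that proposition with $a=0$ and $t=1$ to the sequence $\ell_N = \lfloor \ln^3 N \rfloor - 1$, we have $\e(\Theta_1) \sim \frac{\pi^2}{2}\ln N$ and $\sigma_N^2 := \mathrm{Var}(\Theta_1) = o((\ln N)^2)$, hence
\[
\frac{\sigma_N^2}{\e(\Theta_1)^2} \longrightarrow 0 \quad \text{as } N \to \infty.
\]
The finiteness of the second moment (needed to justify the above) is guaranteed by the fourth moment bound \eqref{moment4}.

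The key observation is that on the event of interest, the threshold $\delta k \e(\Theta_1)$ with $k \in \lb m, \lceil cm\rceil \rb$ is at least $\delta m \e(\Theta_1)$, so
\[
\p\Big( \bigcup_{k=m}^{\lceil cm \rceil} \{|M_k| \ge \delta k \e(\Theta_1)\}\Big)
\le \p\Big( \max_{1\le k \le \lceil cm \rceil} |M_k| \ge \delta m \e(\Theta_1)\Big).
\]
I would then apply Doob's $L^2$ maximal inequality to $(M_k)$, which is a martingale with $\e[M_k^2] = k \sigma_N^2$, to obtain
\[
\p\Big( \max_{1\le k \le \lceil cm \rceil} |M_k| \ge \delta m \e(\Theta_1)\Big)
\le \frac{\lceil cm \rceil \, \sigma_N^2}{\delta^2 m^2 \e(\Theta_1)^2}
\le \frac{(c+1)\sigma_N^2}{\delta^2 m \, \e(\Theta_1)^2}
\le \frac{(c+1)\sigma_N^2}{\delta^2 \, \e(\Theta_1)^2},
\]
where the last bound holds uniformly in $m\ge 1$. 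This upper bound does not depend on $m$ and vanishes as $N\to \infty$ by the first display.

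There is no real obstacle here: the essential point is simply that the threshold scales linearly in $k$ rather than being a fixed constant, which combined with the fact that $\mathrm{Var}(M_k)=k\sigma_N^2$ grows only linearly, gives the saving $\sigma_N^2/(m\,\e(\Theta_1)^2)$ that is uniform in $m$. The corollary then reduces to the $L^2$ statement already contained in Proposition \ref{p:first_moment}.
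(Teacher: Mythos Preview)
Your proof is correct and essentially matches the paper's. The only cosmetic difference is that the paper uses a union bound over $k\in\lb m,\lceil cm\rceil\rb$ together with Chebyshev's inequality for each $k$, obtaining $\sum_{k=m}^{\lceil cm\rceil}\frac{k\,\mathrm{Var}(\Theta_1)}{\delta^2 k^2\,\e(\Theta_1)^2}\le \frac{c}{\delta^2}\frac{\mathrm{Var}(\Theta_1)}{\e(\Theta_1)^2}$, whereas you replace the threshold by its minimum and invoke Doob/Kolmogorov; both routes reduce to the same input $\mathrm{Var}(\Theta_1)/\e(\Theta_1)^2\to 0$ from Proposition~\ref{p:first_moment}.
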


\begin{proof}  For any $m\ge 1$, by the union bound and the  Chebyshev inequality, the above probability term is bounded above by $$\sum_{k=m}^{\lceil cm \rceil} \frac{k \mbox{Var}(\Theta_1)}{\delta^2 k^2 (\e(\Theta_1)^2)} \le \frac{c}{\delta^2} \frac{\mbox{Var}(\Theta_1)}{(\e(\Theta_1)^2)}  $$ which tends to $0$ as $N\to\infty$, by \eqref{NBrw-exp} and \eqref{NBrw-var}.  \end{proof}

\end{document}